\numberwithin{equation}{section}
\numberwithin{figure}{section}
\newtheorem{theorem}{Theorem}[section]
\newtheorem{assumption}[theorem]{Assumption}
\newtheorem{corollary}[theorem]{Corollary}
\newtheorem{proposition}[theorem]{Proposition}
\newtheorem{lemma}[theorem]{Lemma}
\theoremstyle{definition}
\newtheorem{definition}[theorem]{Definition}
\newcommand*{\supp}{\ensuremath{\mathrm{supp\,}}}
\newcommand*{\N}{\ensuremath{\mathbb{N}}}
\newcommand*{\Z}{\ensuremath{\mathbb{Z}}}
\newcommand*{\R}{\ensuremath{\mathbb{R}}}
\newcommand{\eps}{\varepsilon}
\renewcommand*{\tilde}{\widetilde}
\newcommand{\ep}{\eps}
\DeclareSymbolFont{boldoperators}{OT1}{cmr}{bx}{n}
\newcommand\thickbar[1]{\accentset{\rule{.45em}{.6pt}}{#1}}
\renewcommand{\bar}{\thickbar}
\newcommand{\T}{\mathbb{T}}
\def\XXint#1#2#3{{\setbox0=\hbox{$#1{#2#3}{\int}$}
\vcenter{\hbox{$#2#3$}}\kern-.5\wd0}}
\let\originalleft\left
\let\originalright\right
\renewcommand{\left}{\mathopen{}\mathclose\bgroup\originalleft}
\renewcommand{\right}{\aftergroup\egroup\originalright}
\newcommand{\indc}{\mathds{1}}
\newcommand{\C}{\mathbb{C}}
\renewcommand{\hat}{\widehat}
    \edef\sign{\pgfmathresult}%
    \edef\x{\pgfmathresult}%
    \edef\t{\pgfmathresult}%
    \edef\y{\pgfmathresult}%
\newcommand{\addperiod}[1]{#1.}
\titleformat*{\subsection}{\bfseries}
\titleformat{\subsubsection}[runin]
  {\normalfont\bfseries}
  {\thesubsubsection.}
  {0.5em}
  {\addperiod}
\titleformat*{\subsubsection}{\normalfont\itshape}
\titleformat*{\paragraph}{\bfseries}
\titleformat*{\subparagraph}{\large\bfseries}
\title{Superexponential dissipation enhancement on $\T^d$}
\author{
Keefer Rowan\thanks{Courant Institute of Mathematical Sciences,  New York University.
{\footnotesize \href{mailto:keefer.rowan@cims.nyu.edu}{keefer.rowan@cims.nyu.edu}.}
}
}
\date{\today}
\begin{document}

\maketitle

\begin{abstract}
    We construct incompressible velocity fields that exhibit faster than exponential dissipation for particular solutions to the advection-diffusion equation on $\T^d$. In 2D, we construct a velocity field in $L^\infty_{t,x}$ and exhibit a solution that decays with double exponential rate $e^{-C^{-1} e^{C^{-1}t}}$. In 3D, we construct a velocity field in $L^\infty_t W^{1,\infty}_x$ and exhibit a solution that decays with rate $e^{-C^{-1} t^2}$. In 4D, we construct a velocity field in $L^\infty_t C^\infty_x$ and exhibit a solution that decays with \textit{some} superexponential rate.
\end{abstract}

% \setcounter{tocdepth}{2} 
% \tableofcontents

\section{Introduction}

We consider the decay of solutions to the advection-diffusion equation on $\T^d.$ That is functions $\theta : [0,\infty) \times \T^d \to \R$ solving
\begin{equation}
    \label{eq:advection-diffusion}
    \dot \theta_t = \Delta \theta_t + u_t \cdot \nabla \theta_t,
\end{equation}
for an advecting flow $u : [0,\infty) \times \T^d \to \R^d$ that we always take to be incompressible: $\nabla \cdot u=0.$

We are interested in how fast solutions to~\eqref{eq:advection-diffusion} can decay. The norm we use to measure the size of $\theta$ will always be $L^2(\T^d)$, though we note that by parabolic smoothing, rates in other norms will be comparable (see also~\cite{blumenthal_norm_2023}). Exponential decay for zero-mean solutions is direct from the energy identity~\eqref{eq:energy-identity} and the Poincar\'e inequality; we wish to exhibit solutions that decay \textit{faster than exponentially}. Faster than exponential decay is known for the advection-diffusion equation on non-compact domains and using unbounded velocity fields---e.g.\ by Couette flow on $\R \times \T$ or $\R^2$---as well for the time-discretized version of the advection-diffusion equation in which one alternates precomposition by a diffeomorphism $\T^d \to \T^d$ with pure diffusion---e.g.\ by Arnold's cat map. These cases are discussed further in Section~\ref{ss:background}. 

Our results are the first to address the case of advection-diffusion on $\T^d$. We give a sequence of results, each one giving more regularity on the advecting flow. We note however that as the regularity is increased, the rate of decay---while continuing to be superexponential---gets worse, and we are required to work in higher dimension.

Our first result is for velocity fields uniformly bounded in space and time on $\T^2.$

\begin{theorem}
\label{thm:2d}
    For $d=2$, there exists $C>0$, a velocity field $u \in L^\infty([0,\infty) \times \T^2)$ with $\nabla \cdot u =0$, and a choice of initial data $\theta_0 \ne 0, \theta_0 \in C^\infty(\T^2)$, so that if $\theta_t$ is the solution to~\eqref{eq:advection-diffusion}, then
    \begin{equation}
    \label{eq:2d bound}
    \|\theta_t\|_{L^2(\T^2)} \leq C e^{- C^{-1} e^{C^{-1}t}} \|\theta_0\|_{L^2(\T^2)}.
    \end{equation}
\end{theorem}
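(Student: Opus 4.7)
My plan is to construct $u$ as a cascade of rescaled mixers acting at exponentially shrinking spatial scales, and to choose $\theta_0$ so that the solution stays Fourier-concentrated at frequencies $\sim 2^{\lfloor t\rfloor}$, making the dissipation rate $\|\nabla\theta_t\|_{L^2}^2/\|\theta_t\|_{L^2}^2$ grow like $4^t$ and yielding the double-exponential bound.

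\textbf{Construction.} Fix a smooth divergence-free template $U\in C^\infty(\T^2;\R^2)$ with $\|U\|_{L^\infty}\le 1$ and set
\[
u(t,x):=U(2^n x)\qquad\text{for }t\in[n,n+1),\ n\in\N.
\]
Spatial rescaling preserves $\|U\|_\infty$ and $\nabla\cdot U=0$, so $u\in L^\infty_{t,x}$ is divergence-free, but its Lipschitz constant on stage $n$ is $\sim 2^n$, so $u\notin L^\infty_tW^{1,\infty}_x$ --- this lack of a Lipschitz bound is essential, since Carleman-type arguments rule out faster-than-$e^{-Ct^2}$ decay under uniform spatial Lipschitz control of $u$. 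Crucially, $u$ on stage $n$ has Fourier support in $2^n\Z^2$, so the operator $A_n:=-\Delta+u(t,\cdot)\cdot\nabla$ preserves the invariant subspaces $V_{k_0}^{(n)}:=\overline{\operatorname{span}}\{e^{2\pi ik\cdot x}:k\in k_0+2^n\Z^2\}$, and on the ``zero class'' $V_0^{(n)}$ every mode has $|k|\ge 2^n$. Hence, restricted to $V_0^{(n)}$, the skew-symmetry of $u\cdot\nabla$ together with Poincar\'e gives $\|\nabla\theta\|_2^2\ge 4\pi^2\cdot 4^n\|\theta\|_2^2$ for $\theta\in V_0^{(n)}$.

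\textbf{Energy estimate.} Granting $\theta_n\in V_0^{(n)}$ at each stage boundary, the energy identity yields $\|\theta_{n+1}\|_2\le e^{-c\cdot 4^n}\|\theta_n\|_2$, and telescoping across $N$ stages,
\[
\|\theta_N\|_2\le\|\theta_0\|_2\exp\Bigl(-c\sum_{n=0}^{N-1}4^n\Bigr)\le C\|\theta_0\|_2\,e^{-c'\cdot e^{N\log 4}},
\]
the desired double-exponential bound at integer $t=N$; non-integer $t$ is handled by monotonicity of $\|\theta_t\|_2$.

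\textbf{Main obstacle.} The crux is making $\theta_n\in V_0^{(n)}$ hold for every $n$. Since $V_0^{(n+1)}\subsetneq V_0^{(n)}$, the ``excess'' part $V_0^{(n)}\ominus V_0^{(n+1)}$ (Fourier modes $k\in 2^n\Z^2\setminus 2^{n+1}\Z^2$) carries into stage $n+1$ at minimum frequency only $2^n$ and decays there at the slow rate $4^n$, far too slow to sustain the double-exponential bound if left uncontrolled. I would handle this via a backward-in-time compactness argument: for each $N$ fix a smooth target $\phi_N\in V_0^{(N)}$ at time $t=N$, solve the adjoint problem backwards on $[0,N]$ to obtain a candidate $\theta_0^{(N)}$, and pass to a weak-$L^2$ limit after normalization to obtain $\theta_0$. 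By construction, the forward evolution of $\theta_0$ passes near $\phi_n\in V_0^{(n)}$ at every $t=n$, so the cascade hypothesis holds up to a controllable error whose per-stage accumulation respects the double-exponential budget. The main technical tasks are constructing self-consistent targets $\phi_n$, verifying that the backward evolutions admit a nontrivial smooth limit via parabolic smoothing applied backwards from the smooth $\phi_N$, and quantitatively controlling the leakage into the low-frequency classes $V_{k_0}^{(n)}$ with $k_0\ne 0$ --- where the spectral gap of $A_n$ is only $O(1)$ --- across infinitely many stage transitions.
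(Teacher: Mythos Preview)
Your approach is genuinely different from the paper's and has a real gap.

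The paper's proof is fully constructive and never uses compactness or backward problems. It builds, for each $r$, an explicit velocity field that takes the solution from the \emph{pure} Fourier mode $f_{(r,0)}$ exactly to a multiple of the \emph{pure} mode $f_{(0,r+1)}$, in time $\sim 1/r$, with $\|u\|_{L^\infty}\le C$ (Proposition~\ref{prop:fourier to fourier 2d}). The mechanism is an ODE control problem on the line $\{(r,0)+k(-r,r+1):k\in\Z\}$ in Fourier space: first push mass off the $k=0$ mode with a bounded coefficient, then let diffusion concentrate it near $k=1$, then remove the residual error exactly by a Newton-type iteration (Section~\ref{s:ODE}). The initial data is simply $\theta_0=f_{(r_0,0)}$, and at every handoff time the solution is again a single Fourier mode, so the cascade hypothesis holds \emph{exactly}, not approximately. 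The frequency grows linearly in the stage index while the stage lengths are $\sim 1/r$, so the wavenumber grows exponentially in $t$, yielding the double exponential via the energy identity.

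Your proposal, by contrast, relies on a generic rescaled template $U(2^n x)$ and then on a backward-in-time compactness scheme to manufacture $\theta_0$. This is where the gap lies. Solving the adjoint backward from $\phi_N$ produces a function $\psi_0$ with $\langle \theta_0,\psi_0\rangle=\langle\theta_N,\phi_N\rangle$ for every forward solution $\theta$; it does \emph{not} produce initial data whose forward trajectory passes near the targets $\phi_n$. If instead you mean solving the original parabolic equation backward from $\phi_N$, that problem is ill-posed. Either way, ``by construction the forward evolution of $\theta_0$ passes near $\phi_n$'' is unjustified. More fundamentally, the template $U(2^n x)$ carries no mechanism forcing mass from $V_0^{(n)}$ into the smaller $V_0^{(n+1)}$; the excess in $V_0^{(n)}\ominus V_0^{(n+1)}$ is an $O(1)$ fraction, not a perturbative error, and you have no control argument (like the paper's ODE system) to eliminate it. The paper's discussion of Batchelor-scale formation explains exactly why such leakage is the dominant effect for generic flows and why one needs the mode-to-mode precision. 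A minor aside: your remark that Lipschitz control forces at most $e^{-Ct^2}$ decay is not what is known; the Poon/Miles--Doering lower bound under $L^\infty_tW^{1,\infty}_x$ is already double exponential.
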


The double exponential rate of~\eqref{eq:2d bound} is essentially optimal: as discussed in Section~\ref{sss:lower bounds and decay} there exists a double exponential lower bound on $\|\theta_t\|_{L^2}$. Our next result addresses the case of a Lipschitz velocity field, which is often the relevant regularity for enhanced dissipation phenomena. We note however that our rate is no longer double exponential and is likely suboptimal.

\begin{theorem}
\label{thm:3d}
    For $d = 3$, there exists $C>0$, a velocity field $u \in L^\infty([0,\infty), W^{1,\infty}(\T^3))$ with $\nabla \cdot u =0$, and a choice of initial data $\theta_0 \ne 0, \theta_0 \in C^\infty(\T^3)$, so that if $\theta_t$ is the solution to~\eqref{eq:advection-diffusion}, then
    \[\|\theta_t\|_{L^2(\T^3)} \leq C e^{- C^{-1} t^2} \|\theta_0\|_{L^2(\T^3)}.\]
\end{theorem}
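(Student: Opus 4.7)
The plan is to design $u$ as a time-dependent shear flow on $\T^3$, built block-by-block so that each time interval drives one step of a geometric decay cascade. The initial datum is chosen to match the first step of the cascade, and the third spatial dimension allows the operative shearing direction to rotate between blocks.

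Concretely, I would partition $[0,\infty) = \bigsqcup_k I_k$ with block $I_k$ of length $\tau_k$, and on $I_k$ set
\[
u(t,x) = M_k^{-1}\sin(M_k x_{i_k})\, e_{j_k},
\]
for appropriate $M_k$ and indices $(i_k,j_k)$ cycled through pairs in $\{1,2,3\}^2$ with $i_k\neq j_k$. Each such shear is divergence-free (since $i_k\neq j_k$) and satisfies $\|u\|_{L^\infty_t W^{1,\infty}_x(\T^3)}\lesssim 1$ uniformly. The intent is that on each block the shear produces $M_k^{-1}$-scale oscillations in the $x_{i_k}$ direction from an incoming profile aligned with $x_{j_k}$, and diffusion then damps these oscillations at rate $M_k^2$. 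Balancing the Lipschitz-constrained displacement rate $\sim\tau_k/M_k$ against the diffusive timescale $\sim 1/M_k^2$, the choice $M_k\sim k$, $\tau_k\sim k$ produces a per-block $L^2$ decay factor of order $\exp(-ck^3)$. Summing over $N$ blocks yields cumulative decay $\exp(-cN^4)=\exp(-cT_N^2)$ where $T_N=\sum_{k\le N}\tau_k\sim N^2$, matching the claimed rate. The cycling of $(i_k,j_k)$ must be arranged so that the residual low-frequency content left after block $k$'s diffusion is aligned with the next block's $e_{j_{k+1}}$ direction; this is precisely where the third coordinate of $\T^3$ is essential, since in 2D the two available coordinates are both occupied by the shearing and incoming-mode roles, leaving no room to ``reset.''

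The main technical obstacle is establishing the per-block decay rigorously: $\theta$ is not exactly a single Fourier mode at the start of each block, and the shear and diffusion act simultaneously throughout. My plan is to work in Fourier variables adapted to each block's geometry, reducing the dynamics on $I_k$ to a tridiagonal ODE system, with diagonal diffusive damping $|k|^2$ and off-diagonal coupling $\sim k_{j_k}/(2M_k)$ between Fourier modes differing by $M_k e_{i_k}$. The initial datum $\theta_0$ must be chosen (e.g.\ a high-frequency mode in the first block's $e_{j_1}$ direction, with wavenumber scaling so the coupling is order one) to ensure strong coupling throughout the cascade; the induction then tracks a dominant mode whose wavenumber grows across blocks, so that diffusion is progressively enhanced. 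The delicate bookkeeping is to verify that the leakage to off-cascade modes is also damped fast enough so as not to accumulate. I do not expect the $\exp(-ct^2)$ rate to be sharp, but the construction is designed to achieve it in the Lipschitz-velocity class, consistent with the remark following the theorem that the rate is likely suboptimal.
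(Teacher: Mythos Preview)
Your proposal has a genuine gap at precisely the point you flag as ``delicate bookkeeping.'' The issue is not bookkeeping at all; it is the central obstruction to superexponential decay. After your shear acts on an incoming mode $e^{iKx_{j_k}}$, the advection populates the ladder $\{(mM_k)e_{i_k}+Ke_{j_k}\}_{m\in\Z}$, and the $m=0$ rung retains nonzero mass. Diffusion then suppresses the high-$|m|$ rungs \emph{relative to} the $m=0$ rung, so the ratio $\|\nabla\theta\|_{L^2}/\|\theta\|_{L^2}$ is pinned near $K$, not pushed toward $\infty$. This is the Batchelor-scale saturation phenomenon discussed in the paper: unless \emph{all} of the mass is moved off the lowest-wavenumber mode, diffusion concentrates you back there and you only get exponential decay. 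Your cycling of $(i_k,j_k)$ does not address this, since an incoming low-wavenumber mode, whatever its direction, remains low after shearing until you somehow evacuate it completely. Moving all the mass off the low rung is not automatic---indeed, with your shear amplitude $M_k^{-1}$ acting on a mode of wavenumber $K$, the tridiagonal coupling is $\sim K/M_k$, while the diffusion gap between rungs $m=0$ and $m=1$ is $M_k^2$; unless $K\gtrsim M_k^3$ the coupling is too weak even to compete, and in any case a simple sinusoidal shear cannot drive the $m=0$ coefficient exactly to zero.

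The paper's proof attacks this head-on. Rather than a single shear, it builds on each block a time-dependent control $v^k_t$ (supported on a one-parameter family of Fourier modes) that first drives the low-rung coefficient exactly to zero (Proposition~3.4), then lets diffusion suppress all remaining rungs relative to the target rung, and finally runs a Newton iteration in time (Corollary~3.6) to land \emph{exactly} on a single Fourier mode. The role of $d=3$ is also entirely different from what you suggest: it is not a ``reset'' direction but a number-theoretic fact. The Lipschitz constraint forces $|\hat u(b)|\sim 1/|a|$, which in turn requires $|a+b|^2-|a|^2=O(1)$; by Legendre's three-square theorem the set $\{|k|^2:k\in\Z^3\}$ has bounded gaps, so one can always find such $b$, whereas in $\Z^2$ the gaps are unbounded. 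With this in place, each step takes uniformly bounded time and increases $|a|^2$ by at least one, and the energy identity yields $e^{-C^{-1}t^2}$ decay.
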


Our final result is for a smooth velocity field, uniformly bounded in $C^n(\T^d)$ for all $n$. The rate $f(t)$ given below is non-explicit. Some additional work should be able to yield an explicit subalgebraic rate, but since we believe that this is likely far from optimal in any case, we give a qualitative statement in order to simplify the proof.

\begin{theorem}
\label{thm:4d}
    For $d =4$, there exists a velocity field $u : [0,\infty) \times \T^4 \to \R^d$ with $\nabla \cdot u =0$ and a choice of initial data $\theta_0 \ne 0, \theta_0 \in C^\infty(\T^4)$, so that for any $n \geq 0$, $u \in L^\infty([0,\infty), W^{n,\infty}(\T^d))$ and if $\theta_t$ is the solution to~\eqref{eq:advection-diffusion}, then
    \[\|\theta_t\|_{L^2(\T^4)} \leq e^{-  f(t)t}\|\theta_0\|_{L^2(\T^4)},\]
    for some increasing function $f : [0,\infty) \to [0,\infty)$ such that $\lim_{t \to \infty} f(t) = \infty$.
\end{theorem}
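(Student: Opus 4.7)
The plan is to adapt the 3D Lipschitz construction of Theorem~\ref{thm:3d} by using the extra fourth dimension as a ``smoothing channel''. The $W^{1,\infty}$ bound in 3D presumably comes from flows whose characteristic spatial scale $\ell_k^{-1}$ shrinks along a sequence of time intervals $I_k$, producing $\|u\|_{C^n} \sim \ell_k^n$ for $n \geq 2$. To obtain a velocity field uniformly bounded in every $C^n$, I would work on time intervals $I_k = [t_k,t_{k+1}]$ with $|I_k|$ growing slowly, and on each $I_k$ use a smooth 4D velocity field built from three substages: (a) a smooth transport in the $x_4$ direction relocates the relevant mass into a thin ``active'' slab of fixed geometry, (b) inside this slab one applies a rescaled copy of the Theorem~\ref{thm:3d} flow, now at a bounded active spatial scale because the slab is fixed independently of $k$, and (c) a smooth transport returns the mass. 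Since the in-slab scale is bounded independently of $k$, each $\|u_k\|_{C^n}$ is bounded independently of $k$, with the temporal gluing handled by smooth cutoffs whose derivatives are absorbed by lengthening $|I_k|$.

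Next, I would calibrate $(\ell_k, |I_k|)$ so that each $I_k$ contributes a multiplicative $L^2$ decay factor $e^{-g(k)}$ with $g(k)/t_k \to \infty$. Substages (a) and (c), being smooth transport plus the ambient diffusion, cost at most a bounded factor per interval. Substage (b) carries the genuine enhanced-dissipation content, inherited from a truncated and rescaled version of the Theorem~\ref{thm:3d} analysis; its effective decay is reduced from the sharp Lipschitz rate by the mollification needed to pass to smooth profiles, but the reduction is quantitative and can be tracked. The initial data $\theta_0$ is chosen to match the first active slab so that no initial alignment loss occurs, and the conclusion then follows by piecewise application of the energy identity
\begin{equation*}
    \|\theta_t\|_{L^2(\T^4)}^2 + 2\int_0^t \|\nabla \theta_s\|_{L^2(\T^4)}^2 \, ds = \|\theta_0\|_{L^2(\T^4)}^2,
\end{equation*}
after defining $f(t) := g(k)/t$ for $t \in I_k$, which is eventually increasing and tends to $\infty$.

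The hard part, I expect, is the simultaneous control of divergence-freeness, smoothness at the joins between substages (a), (b), (c), and the uniform $C^n$ bounds. Composing the $x_4$-transport with the in-slab flow introduces cross-derivatives that must be bounded, and the temporal cutoffs' $C^n$ norms must be absorbed by the slow growth of $|I_k|$. This bookkeeping is precisely where the rate $f$ becomes non-explicit: each regularity level $n$ imposes a different lower bound on how fast $|I_k|$ must grow, so a diagonal argument over $n$ is needed to yield a divergent but possibly very slowly growing $f(t)$.
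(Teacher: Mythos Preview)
Your proposal has a genuine gap: the slab-and-rescale mechanism you describe does not actually yield uniform $C^n$ bounds, because it misidentifies the source of the regularity obstruction in the 3D construction. The $W^{1,\infty}$ limitation in Theorem~\ref{thm:3d} is not that the flow lives at a shrinking \emph{physical} scale that could be cured by localizing to a fixed-geometry slab. Rather, to move mass from a Fourier mode $a$ to a mode $a+b$ with $|a+b|^2-|a|^2\approx 1$, one is forced to take $|b|\approx |a|$ and $|\hat u(b)|\approx 1/|a|$; this gives $\|u\|_{C^n}\gtrsim |b|^n|\hat u(b)|\approx |a|^{n-1}$, which blows up for $n\ge 2$ as $|a|\to\infty$. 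Restricting to a slab in $x_4$ does nothing to the first three coordinates: the 3D flow still must carry Fourier content at wavenumber $\sim|a|$ in $(x_1,x_2,x_3)$, and no amount of smooth $x_4$-transport or temporal cutoff absorbs the growing $|b|^n$ factor. Your substage~(b) would therefore still have $\|u\|_{C^n}\to\infty$ along the iteration, and the diagonal argument over $n$ cannot fix this because the blow-up is at fixed $n\ge 2$, not only in the limit.

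The paper's mechanism is entirely different and lives purely in Fourier space. The extra dimension is used to manufacture a step with a \emph{fixed}-size increment $b=(0,0,0,-2p-1)$: one moves $(m,n,\ell,p)\mapsto(m,n,\ell,-p-1)$, which is ``uphill'' in $|\cdot|^2$ but has $|b|=2p+1$ independent of $(m,n,\ell)$, so this step is uniformly $C^\infty$ automatically. One then moves ``downhill'' from $(m,n,\ell,-p-1)$ to a mode $(x,y,z,p)$ with $|(x,y,z)|^2>|(m,n,\ell)|^2$ chosen via Legendre's three-square theorem; the key observation is that downhill moves (to a strictly smaller $|\cdot|^2$) can be effected with an arbitrarily small velocity, hence at arbitrary regularity, regardless of $|b|$. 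Iterating this two-step cycle sends the Fourier mass to infinity with a uniformly-in-$C^n$ velocity field, and the energy identity then gives the superexponential decay. There is no physical-space slab, no rescaling, and no diagonal argument over $n$.
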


\subsection{Background and discussion}
\label{ss:background}

The phenomenology of solutions to advection-diffusion equations---also know as passive scalars---is both of independent interest and a stepping stone to understanding the decidedly more difficult phenomenology of fluids. While passive tracers have long been of interest in the physics community (see~\cite{falkovichParticlesFieldsFluid2001} for a thorough overview), it is only more recently that they have seen extensive mathematical treatment. In the rough setting, the intriguing phenomenon of anomalous dissipation has been thoroughly investigated, see~\cite{drivas_lagrangian_2017,colombo_anomalous_2023,armstrong_anomalous_2025,burczak_anomalous_2023,elgindi_norm_2024,rowan_anomalous_2024,johansson_anomalous_2024,hess-childs_turbulent_2025}, among others. However, for our purposes, the more relevant literature is on \textit{mixing}, \textit{enhanced dissipation,} and their interrelations. 
\subsubsection{Mixing and enhanced dissipation}

Mixing is (most naturally) a phenomenon of the pure transport equation: 
\[\dot \theta_t = u_t \cdot \nabla \theta_t,\]
where we still consider $u_t$ to be incompressible $\nabla \cdot u_t =0.$ The advective term $u_t \cdot \nabla \theta_t$ tends to generate small scales in the solution $\theta_t$, moving mass from low wavenumber Fourier modes to high wavenumber Fourier modes. Mixing happens when \textit{all of the Fourier mass} is moved to high wavenumbers, which can be easily quantified using negative regularity Sobolev norms. That is (loosely) $u_t$ exhibits mixing when $\|\theta_t\|_{H^{-1}} \to 0$ as $ t \to \infty.$ Note that as $\nabla \cdot u_t=0,$ $\|\theta_t\|_{L^2} = \|\theta_0\|_{L^2}$, and as such the phenomenon of mixing necessarily only happens if the Fourier mass moves from low modes to high modes. A great number of flows $u_t$ have been proven to exhibit mixing:~\cite{yao_mixing_2017,alberti_exponential_2019,elgindi_universal_2019,bedrossian_almost-sure_2022,myers_hill_exponential_2022,blumenthal_exponential_2023,cooperman_exponential_2024,elgindi_optimal_2025,navarro-fernandez_exponential_2025} among others.

Enhanced dissipation is a phenomenon of the advection-diffusion equation, most easily understood through the introduction of an explicit diffusion coefficient:
\[\dot \theta_t = \kappa \Delta \theta_t + u_t \cdot \nabla \theta_t.\]
When $\kappa =0$, since $\nabla \cdot u_t =0$, we have that $\|\theta_t\|_{L^2} = \|\theta_0\|_{L^2}$. For $\kappa>0$, the Poincar\'e inequality ensures that any zero-mean data decays with an exponential rate
\[\|\theta_t\|_{L^2} \leq e^{-C^{-1} \kappa t}\|\theta_0\|_{L^2}.\]
When $u_t =0$, this rate is saturated e.g.\ by taking $\theta_0(x) = \sin(x_1)$. Enhanced dissipation happens when the presence of the advecting flow $u_t$ causes dissipation with a faster rate, asymptotically as $\kappa \to 0$, than is present for the pure diffusive equation. For example, instead of the exponential rate going to $0$ as $O(\kappa)$, it could rather be $O(\kappa^{\alpha})$ for some $\alpha \in(0,1)$ or even $O((\log \kappa^{-1})^{-1})$. Dissipation enhancement has also been demonstrated in myriad settings:~\cite{constantin_diffusion_2008,bedrossian_enhanced_2017,zelati_stochastic_2021,bedrossian_almost-sure_2021,albritton_enhanced_2022,coti_zelati_enhanced_2023,villringer_enhanced_2024,elgindi_optimal_2025,zelati_stochastic_2025} among others. Mixing and enhanced dissipation are in fact intimately connected. In particular, mixing implies dissipation enhancement---a relationship that can even be made quantitative~\cite{constantin_diffusion_2008,feng_dissipation_2019,zelati_relation_2020,cooperman_exponentially_2025}.

\subsubsection{Lower bounds and superexponential decay}
\label{sss:lower bounds and decay}
The above results on enhanced dissipation give upper bounds on the $L^2$ norm that decay exponentially fast. We know that decay with an exponential rate is optimal for the purely diffusive case $u_t=0$, as is made clear by the explicit form of solutions to the heat equation. However, once we add in the advection, it is less clear what the optimal rate of decay is. When the advective term is time-independent: $u_t =u_0$, spectral theory considerations ensure we continue to only have exponential decay of solutions and no faster; similarly in the time periodic case $u_{t+1} = u_t$. However in the truly time-inhomogeneous case, the fastest possible rate of decay is less clear.

There are however some lower bounds available even in the time-inhomogeneous setting.~\cite{miles_diffusion-limited_2018}, adapting arguments of~\cite{poon_unique_1996}, give double exponential \textit{lower bounds} for $\|\theta_t\|_{L^2}$ in case that $u \in L^\infty_t W^{1,\infty}_x$ and even in the case that $u \in L^\infty_{t,x}$, though in the latter setting there are constants that diverge as $\kappa \to 0.$ Thus for the advection-diffusion equation on the torus, there was a large gap between the fast known decay rate (exponential) and the best known lower bound (double exponential). It is the goal of this work to (partially) bridge this gap by exhibiting faster than exponentially decaying solutions.

In some settings, faster than exponential decay is already known. If one considers the advection-diffusion equation on $\R^d$ and with a linear (hence a necessarily unbounded) advecting flow, $u(x) = Ax$, then taking a Fourier transform of the advection-diffusion equation gives a particular simple first order PDE that admits an explicit solution through characteristics. We note that we must have $\mathrm{Tr}\, A =0$ to enforce $\nabla \cdot u =0$. Taking
\[A = \begin{pmatrix} 1 & 0 \\0 &-1\end{pmatrix},\]
one can readily verify that for well chosen data we get a double exponential rate of decay 
\[\|\theta_t\|_{L^2} \leq Ce^{-C^{-1} e^{C^{-1} t}} \|\theta_0\|_{L^2}.\]
We can also consider the Couette flow (for which the analysis could also be performed on $\R \times \T$ in place of $\R^2$), given by
\[A = \begin{pmatrix} 0 & 0 \\ 1 & 0 \end{pmatrix}.\]
One can then compute (again for well chosen data) that we get the super exponential rate of decay
\[\|\theta_t\|_{L^2} \leq C e^{- C^{-1}t^3}\|\theta_0\|_{L^2}.\]

The case of linear advecting flows is very special for a variety of reasons, but the most important for our considerations is that the advection-diffusion equation with a linear advecting flow takes pure Fourier modes to pure Fourier modes. This makes the interplay between the advection and the diffusion much simpler and allows for explicit computation of decay rates.

Working on a  compact domain such as $\T^d$, we no longer have access to linear advecting flows. However, we can consider the following time-discretized variation of the advection-diffusion equation instead. Take some volume-preserving diffeomorphisms $\Phi_j: \T^d \to \T^d$ and inductively define $\theta_n$ as
\[\theta_{n+1} := e^{\Delta} \theta_n \circ \Phi_{n+1}.\]
We note that this variation of the advection-diffusion generalizes the ``pulsed diffusion'' problem, in which one alternates flowing by the pure transport equation and by the pure diffusion. It is however a strict generalization as we allow maps $\Phi_j$ that are not generated as the unit-time map of a flow of diffeomorphism. In particular, we can take $\Phi_j(x) = Ax$ for some $A \in SL(\Z^d)$ to be a linear toral automorphism, such as Arnold's cat map. In this case, we see the map
\[\theta \mapsto e^{\Delta} \theta \circ \Phi \]
also takes Fourier modes to Fourier modes, once again allowing for explicit computation of decay rates in $L^2$. For $A$ hyperbolic---e.g.\ Arnold's cat map---we again get a double exponential rate of decay:
\[\|\theta_n\|_{L^2} \leq C e^{- C^{-1} e^{C^{-1} n}}\|\theta_0\|_{L^2}.\]
See~\cite{feng_dissipation_2019} for further discussion of superexponential dissipation in this discrete-time setting.

\subsubsection{Batchelor scale formation}

The above results may give the impression that faster than exponential decay for solutions to the advection-diffusion equation on $\T^d$ is largely expected. However, there are good reasons to think such fast decay is strongly disfavored. We recall the standard energy identity for the advection-diffusion equation, keeping the explicit diffusion coefficient $\kappa$:
\[\frac{d}{dt} \|\theta_t\|_{L^2} = -\kappa \frac{\|\nabla \theta_t\|_{L^2}^2}{\|\theta_t\|_{L^2}^2}\|\theta_t\|_{L^2}.\]
Thus the (instantaneous) rate of decay in $L^2$ is given by the quantity
\[-\kappa \frac{\|\nabla \theta_t\|_{L^2}^2}{\|\theta_t\|_{L^2}^2}.\]
We note that $\frac{\|\nabla \theta_t\|_{L^2}}{\|\theta_t\|_{L^2}}$
can be thought of as a natural inverse length scale for the passive scalar $\theta_t$, giving the typical wavenumber at which $\theta_t$ has Fourier mass. 

As is seen in the numerical simulations of the pioneering study~\cite{miles_diffusion-limited_2018}, fluctuations of the passive scalar beyond the ``diffusive scale'' $\kappa^{1/2}$ tend to be strongly suppressed by the diffusion, causing a tendency of $\frac{\|\nabla \theta_t\|_{L^2}}{\|\theta_t\|_{L^2}}$ to saturate at $O(\kappa^{-1/2}),$ with $\frac{\|\nabla \theta_t\|_{L^2}}{\|\theta_t\|_{L^2}} \gg \kappa^{-1/2}$ being highly unstable. The length scale $\kappa^{1/2}$ is sometimes called the Batchelor scale after~\cite{batchelor_small-scale_1959}. As a consequence to the saturation at the Batchelor scale, we have that the exponential rate of decay, $\kappa \frac{\|\nabla \theta_t\|_{L^2}^2}{\|\theta_t\|_{L^2}^2}$, saturates at $O(1).$ In fact~\cite{miles_diffusion-limited_2018} conjectures that there is a universal upper bound on $\kappa \frac{\|\nabla \theta_t\|_{L^2}^2}{\|\theta_t\|_{L^2}^2}$ and so that faster than exponential decay is impossible. Our results show this conjecture to be false.

The above discussion suggests that we should not at least \textit{typically} expect faster than exponential decay, which would require $\frac{\|\nabla \theta_t\|_{L^2}}{\|\theta_t\|_{L^2}}  \to \infty$. This intuition is confirmed by~\cite{hairer_lower_2024}, which takes the advecting flow $u_t$ to be the solution to the stochastically forced Navier--Stokes equations on $\T^2$ and demonstrates that typical data decays with at most an exponential rate.

This discussion appears to be in conflict with the above examples which demonstrate superexponential dissipation: if $\frac{\|\nabla \theta_t\|_{L^2}}{\|\theta_t\|_{L^2}} \gg \kappa^{-1/2}$ is so unstable, how do those examples exhibit $\frac{\|\nabla \theta_t\|_{L^2}}{\|\theta_t\|_{L^2}} \to \infty$? The answer is in the special fact that those constructions rely on taking pure Fourier modes to pure Fourier modes. The  diffusion enforces $\frac{\|\nabla \theta_t\|_{L^2}}{\|\theta_t\|_{L^2}} \approx \kappa^{-1/2}$ by suppressing small scales relative to large scales, causing the amount of mass on very small scales to decrease so that the majority of mass lives on wavenumbers $\lesssim \kappa^{-1/2}.$ By taking Fourier modes to Fourier modes, the above examples keep Fourier mass very concentrated. As such, there is no mass on large scales that the small scales can be suppressed relative to, allowing for ever smaller scales to form. Moving mass from pure Fourier modes to pure Fourier modes is also the central trick of our construction; it is however much less clear how to do this in the setting of the advection-diffusion equation on $\T^d$ which doesn't allow for the linear flows or linear maps considered above.

\subsubsection{Other lower bounds on the advection-diffusion equation}

We note that beyond the double exponential lower bounds given by~\cite{miles_diffusion-limited_2018}, there are some additional lower bounds available for the advection-diffusion equation.~\cite{seis_bounds_2023} gives precise and sharp bounds on the exponential rate of enhanced dissipation and the size of the multiplicative prefactor. These bounds are saturated at some finite time (e.g.\ $t \approx \log \kappa^{-1}$) while we are interested in the very long time behavior $t \to \infty.$ \cite{nobili_lower_2022} gives algebraic-in-time lower bounds on solutions to the advection-diffusion equation on $\R^d$ given space and time decay conditions on the advecting flow.

\subsection{Open problems}

The most direct next question stemming from this work is whether one can construct a ``universal'' superexponentially dissipating flow---that is a flow on $\T^d$ that causes superexponential dissipation for all mean-zero data. Such a flow would be a counterexample to a variety of conjectures one could make relating to the formation of a Batchelor scale, that is $\frac{\|\nabla \theta_t\|_{L^2}}{\|\theta_t\|_{L^2}}$ saturating at $\kappa^{-1/2}.$

One can also try to improve on the rates attained in the higher regularity settings of Theorem~\ref{thm:3d} and Theorem~\ref{thm:4d}, as well as the dimension requirements. For example, the following problem remains open: does there exist a uniformly smooth advecting flow on $\T^2$ that causes double exponential decay in the advection-diffusion equation?

A line of inquiry of greater physical interest is understanding Batchelor scale formation in generic flows, such as the sufficiently non-degenerate random advecting flows considered in~\cite{bedrossian_almost-sure_2022, blumenthal_exponential_2023}.~\cite{hairer_lower_2024} is a meaningful step in this direction, however while their results rule out superexponential dissipation, they do not get the expected quantitative behavior of $\frac{\|\nabla \theta_t\|_{L^2}}{\|\theta_t\|_{L^2}}$ as one sends $\kappa \to 0$, getting a slightly too large upper bound of $\frac{\|\nabla \theta_t\|_{L^2}}{\|\theta_t\|_{L^2}} \lesssim \kappa^{-2}$, allowing for potentially faster decay rates for smaller $\kappa.$ Additionally,~\cite{hairer_lower_2024} relies on a highly non-degenerate advecting flow with stochastic fluctuations present on every Fourier mode. It would be very interesting to see lower bounds on rates of decay for typical data that 1) have the correct asymptotics as $\kappa \to 0$ and 2) hold for less noisy flows, such as the Pierrehumbert mixing example~\cite{pierrehumbert_tracer_1994}.

\section{Notation, overview of the argument, and a reduction of the problem}

Before going further, let us specify our conventions. We identify $\T^d := [0,2\pi]/\sim$. For $k \in \Z^d$, we denote the Fourier mode
    \[f_k(x) := e^{i k \cdot x}.\]
For $\theta : \T^d \to \C$, we denote $\hat \theta : \Z^d \to \C$ so that
    \[\theta = \sum_k \hat \theta(k) f_k \quad \text{and} \quad \hat \theta(k) = \frac{1}{(2\pi)^d} \int e^{-i k\cdot x} \theta(x)\,dx. \]

\subsection{Overview of the argument}

\label{ss:overview}
We first note that it suffices to prove the result for complex-valued initial data to give the result for real-valued data. Since the equation is reality-preserving, if some complex solution decays superexponentially, the real (and imaginary) parts of the solution are also superexponentially decaying solutions. It will be useful to take complex solutions so we can work with single Fourier modes. 

The main idea of the argument is construct advecting flows $u_t$ that take some initial data $\theta_0$ supported on a single Fourier mode and, flowing under the advection-diffusion equation~\eqref{eq:advection-diffusion}, take $\theta_0$ to $\theta_T$, where $T$ is some finite time and $\theta_T$ is supported on a single Fourier mode of larger wavenumber. If we can accomplish this for a sufficiently rich collection of initial and final Fourier mode wavenumbers, we can iterate the procedure in order to construct a flow $u_t$ that send $\theta_t$ to infinity in Fourier space, hence causing $\frac{\|\nabla \theta_t\|_{L^2}}{\|\theta_t\|_{L^2}} \to \infty$ and so causing faster than exponential decay.

The rate at which we are able to transfer the solution $\theta_t$ from one Fourier mode to another, as well as the separation of the wavenumber magnitudes of the Fourier modes, determines the rate of growth of $\frac{\|\nabla \theta_t\|_{L^2}}{\|\theta_t\|_{L^2}},$ and hence the superexponential rate of decay. We discuss below why the different rates appear in the different settings of Theorem~\ref{thm:2d}, Theorem~\ref{thm:3d}, and Theorem~\ref{thm:4d}.

The main technical issue then is building the velocity field that transfers the solution from one pure Fourier mode to another. We first give a complete overview of how we construct such a flow for Theorem~\ref{thm:2d} and then note what new ideas are needed for Theorem~\ref{thm:3d} and Theorem~\ref{thm:4d}.

\subsubsection{Reduction to a 1D problem}

The first, most basic idea is that we can get a dimensional reduction of the problem (in any dimension) to an essentially 1D problem, which will be studied in Fourier space and as such will be an infinite family of ODEs indexed by $k \in \Z$. This dimensional reduction is achieved as follows. Suppose we are starting on the Fourier mode $f_a$ and want to move all the mass onto the mode $f_{a+b}$ with $a,b \in \Z^d -\{0\}$. We can then consider a flow $u_t(x) = v_t(b \cdot x)$ for some $v : [0,\infty) \times \T \to \R^d$. If we take a flow of this form and have initial data $\theta_0 = f_a$, then the solution $\theta_t$ will be supported in Fourier space exclusively on $a + kb, k \in \Z$. It only will be through flows of this form that we transport our Fourier mass from one mode to another, and as such what we need is a good understanding of the infinite ODE system for the coefficients for $f_{a+kb}$ that this induces.

Proposition~\ref{prop:ode-to-pde} codifies the translation of the problem into studying an infinite ODE system. It gives that to build a flow $u_t$ that moves Fourier mass from $a$ to $a+b$ for $a,b \in \Z^d - \{0\}$ it suffices build $v^j : [0,\infty) \to \C$ for $j \in \Z$ such that if we solve
\[\begin{cases}
    \dot z^k_t =  - d_k z^k_t  + i\sum_j v^j_t  z^{k-j}_t,\\
    z^k_0 = \delta_{k,0},
\end{cases}
\]
then there is some time $T>0$ such that $z^k_T = \beta \delta_{k,1}$ for some $\beta \in \C$, and where\footnote{You may note that Proposition~\ref{prop:ode-to-pde} there are additional parameters $A,L$. These are to clean up the technical presentation but are inessential, so we ignore them for now.}
\[d_k :=|a+kb|^2.\]
In order to enforce that $u_t$ is real-valued, we need that $v^{-j}_t = \overline{v^j_t},$ as such we are really only free to choose $v^j_t$ for $j \in \N.$

\subsubsection{What modes we can hope to move mass between}

We now ask for what values $a,b \in \Z^d - \{0\}$ we can reasonably hope to be able to move the mass from $a \mapsto a+b$. The simplest guess would be that we can take $a = (r,0)$ and $b = (1,0)$ so we are moving mass from $(r,0) \mapsto (r+1,0)$. The divergence-free constraint of $u_t$ however makes this impossible (at least to do in a single step). When converting the advection-diffusion equation into the ODE system for $z^k_t$, we pick up a prefactor on the $v^j_t$ that is $0$ if $b$ is parallel to $a$ and $1$ if $b$ is perpendicular to $a$. As such, we want to pick $b$ to be close to perpendicular to $a$, making the above choice of $a=(r,0), b=(1,0)$ a bad one.

The next guess would be to take $a =(r,0)$ and $b = (1,1)$ so we move mass $(r,0) \mapsto (r+1,1)$. Then taking $b= (1,-1)$, we could hope to move mass to $(r+2,0)$ and then iterate the process. This however is also a poor choice. The reason is the diffusion coefficients $d_k$ that appear in the ODE system. Let $a = (r,0), b=(1,1)$, suppose $r$ is large, and consider $k \approx -r/2.$ In that case the diffusion coefficient $d_k \approx r^2/2 \ll d_0,d_1 \approx r^2.$ As such, once a little bit of mass moves onto the modes $z^k_t$ for $k \approx -r/2$, the diffusion will damp the $z^0_t, z^1_t$ modes much more strongly than the $z^k_t$ modes, and so the majority of the mass will quickly accumulate on the $z^k_t$ modes. In short order, all but a vanishing percentage of the mass of $z_t$ will be on the $z^k_t$ modes for $k \approx -r/2.$ This goes exactly contrary to our goals: we have moved the majority of mass from higher Fourier modes to lower Fourier modes.

The take away is the following. If we want to choose $a,b \in \Z^d - \{0\}$ so that we can move mass from $a \mapsto a+b$, then we need that 1) $b$ is far from parallel to $a$ and 2) that for $d_k = |a+kb|^2$ we have that $d_k \geq d_0,d_1$ for $k \ne 0,1$. In fact, it will be advantageous to strengthen the second condition to $d_k \gg d_0,d_1$ for $k \ne 0,1$. This will then imply that the diffusion strongly suppresses all modes except $z^0_t,z^1_t$, allowing us to treat all other terms as perturbations.

With the above discussion in mind, in the setting of Theorem~\ref{thm:2d} a good choice is to take $a = (r,0)$ to $a+b = (0,r+1)$ for $r \in \N$ sufficiently large. By iterating the map $(r,0) \mapsto (0,r+1)$, appropriately swapping $x$ and $y$ on every step, we see that this allows us to send the Fourier mass to $\infty$. We see also that $b$ is essentially at $45^\circ$ angle to $a$ so far from parallel. Additionally, we compute $d_0 =r_2, d_1 = r^2 + 2r + 1$, and for $k \ne 0,1, d_k \geq 4r^2 \gg d_0,d_1$.

\subsubsection{\texorpdfstring{First step: moving the mass off of $z^0_t$}{First step: moving the mass off of z\^0\_t}}

The first step of the argument is to choose $v^j_t$ so that at some time $T>0$, we have that $z^1_T \ne  0$ and $z^0_T =0,$ that is we want to move all of the mass off of $k=0.$ We note that this will leave mass scattered on the other modes---$z^k_T$ for $k \ne 0,1$---as well; dealing with this scattering of mass will be the goal of the next step. Moving the mass off of $z^0_t$ requires some bookkeeping but is not too difficult; it is however in this step where the regularity of the velocity field $u_t$ is determined. This is because, ignoring $k \ne 0,1$ entirely for now, we want to move the mass from $z^0_t$ to $z^1_t$. This is accomplished through just using $v^1_t$ (and also $v^{-1}_t$, since $v^{-1}_t = \overline{v^1_t}$). Looking at the ODE system, we see the rate at which we can move mass off of $z^0_t$ is determined by the amount of mass on $z^1$ as well as the magnitude of $v^1_t$. Just looking at the simplified system of $z^0_t,z^1_t$, one can verify that there is a minimum magnitude of $v^1_t$---depending on $d_1 - d_0$---needed to be able to move all of the mass off of $z^0_t$: if $v^1_t$ is too small, some mass will always remain trapped on $z^0_t.$ Computing this minimum magnitude in the 2D setting, one sees that to move from the Fourier mode $f_{(r,0)}$ to $f_{(0,r+1)}$ we require a velocity field $u_t$ such that $|\hat u_t(-r,r+1)| = O(1).$ This is why we only get $L^\infty$ regularity in the 2D setting: we require ever larger Fourier modes of $u_t$ to be $O(1)$, something that is only uniformly bounded in a zero regularity norm such as $L^\infty.$

In fact, doing the computation in greater generality, we can see that---assuming $b$ is far from parallel to $a$---in order to move the mass from $a$ to $a+b$, we need to build $u_t$ so that $|\hat u(b)| \approx \frac{D}{|a|}$ where $D := |a+b|^2 - |a|^2$. We note that in the setting of $a = (r,0)$ and $b = (-r,r+1)$ that we are considering presently, $\frac{D}{|a|} \approx 1.$ One can also see that the time required to move the mass is $\approx \frac{1}{|\hat u(b)| |a|}$.

\subsubsection{Second step: cleaning up the error}

Once we have moved all of the mass off of $z^0_t$, the next step is to turn off the advection---setting $v^j=0$---and let the diffusion alone act. The diffusion will preserve that $z^0_t=0$ and since $d_k \gg d_1$ for $k \ne 0,1$, waiting long enough will cause almost all of the mass of $z^k_t$ to be in the $k=1$ coordinate, that is 
\[\frac{\sum_{k \ne 1} |z^k_t|^2}{\sum_k |z^k_t|^2} \ll 1.\]
We then want to choose $v^j_t$ that takes the error---the mass not on $z^1_t$---and sends it to $0$. This has the form of a perturbative control problem, as the error we are seeking to remove is perturbatively small. This error is sent to $0$ with what is essentially a Newton's method, unfolding dynamically in time. That is, we expand the problem perturbatively and seek to remove the leading order error (of order $\ep$), requiring us to solve a linear control problem. Canceling the error to leading order then leaves behind an error of lower order (order $\ep^2$). Repeating this process on a sequence of dyadic time intervals leads to convergence of the error to $0$ in a finite (unit) time interval, provided the initial error was small enough. This then completes the task, at the final time $z^k_t = \beta \delta_{k,1}.$

Thinking of the problem in real space (as opposed to Fourier space), it is clear we must somehow be using the diffusion in order to remove the error in this step. That is because, without the diffusion, there are geometric obstructions to building a flow that takes us from \textit{close to all of the mass} being on $z^1_t$ to \textit{all of the mass} being on $z^1_t$, since flow by a divergence-free velocity field preserves the measure of the super-level sets of the solution. Thus we must be using a somewhat delicate interplay between the advection and the diffusion in order to remove the final error.

This interplay reveals itself in the linear control problem that we must solve for the Newton's method. The linear control problem almost entirely decouples mode-by-mode; that is we can choose $v^k_t$ in order to remove the error on the mode $z^{k+1}_t$, without affecting to leading order the errors on any other modes. However, because $v^{-k}_t = \overline{v^k_t},$ we end up needing to solve the problems for $z^{1-k}_t$ and $z^{1+k}_t$ simultaneously. This in principle should not be a problem, since we can choose from the infinite dimensional space of time dependent paths $v^k : [0,1] \to \C$. However in the absence of diffusion, the time dependency trivializes: the only thing that enters into the final computation is $\int_0^1 v^k_s\,ds$, a single complex number. We would then have a space of controls $\C^1$, but we would need to fix errors $(z^{1-k},z^{1+k}) \in \C^2,$ which is generically impossible. This is where the diffusion comes into play. The diffusion---in particular the difference of the diffusion coefficients $d_{1+k}$ and $d_{1-k}$---induce a nontrivial time dependency into the effect of the control $v^k_t$. This then means, provided that $d_{1+k} \ne d_{1-k}$, we truly do have an infinite dimensional space of controls in order to solve for an error in $\C^2$. Thus the presence of the diffusion allows for the solvability of the linear control problem.

In terms of rates and regularity, this perturbative control step is much less important. That is, it can always (both in the setting of Theorem~\ref{thm:2d} as well as the settings of Theorem~\ref{thm:3d} and Theorem~\ref{thm:4d}) be done more quickly and at higher regularity than the first step of moving the mass off of $z^0_t.$ This is essentially due to the fact that the diffusion acts very strongly to rapidly smooth out the problem.

\subsubsection{Rate of decay for Theorem~\ref{thm:2d}}

As noted above, the time to move the mass $(r,0) \mapsto (0,r+1)$ is dominated by the first step, which takes time $\approx \frac{1}{|u(b)| |a|} \approx \frac{1}{r}$. This then gives that we can moves the Fourier mass moves to infinite wavenumber with an exponential rate. We thus have that the exponential rate of decay grows exponentially, giving the double exponential decay of Theorem~\ref{thm:2d}.

\subsubsection{Modifications for Theorem~\ref{thm:3d}}

Let us now discuss what we need to change for Theorem~\ref{thm:3d}. Our goal is to improve the regularity of the advecting flow $u_t$ while still moving the Fourier mass mode to mode out to $\infty.$ As discussed above, the step which limits the regularity was the first step of the argument, where all of the mass is moved off of the smallest wavenumber Fourier mode. There are two options for improving the regularity of the advecting flow: make $b$ smaller or make $|\hat u(b)|$ smaller. In order to have the right structure on the diffusion coefficients $d_k$---$d_k \gg d_0,d_1$ for $k \ne 0,1$---it is helpful to take $|b| \approx |a|$, essentially so that $a+b$ can be at a meaningfully different angle than the vector $a$. Thus we rather focus on making $|\hat u(b)|$ smaller. It was noted above that we need $|\hat u(b)| \approx \frac{D}{|a|}$ where $D = |a+b|^2 - |a|^2.$ Clearly, $D \geq 1$, since it is a difference of integers (which we want to be positive as we want to move mass to larger wavenumbers). Thus the smallest we can hope to make $|\hat u(b)| \approx \frac{1}{|a|}$. Given that we also want to take $|b| \approx |a|$, this is exactly consistent with the Lipschitz regularity of Theorem~\ref{thm:3d}. 

In order to have $|\hat u(b)| \approx \frac{1}{|a|}$, we need $D \approx 1$: that is we need $a, b$ such that $|a+b|^2 - |a|^2 \approx 1.$ What we need to then understand is the gaps between square magnitudes on the lattice $\Z^d$, that is the gaps in the set $S_d:= \{|k|^2 : k \in \Z^d\} \subseteq \N$. In 2D, there are arbitrarily large gaps: for all $R>0$ there exists $n \in S_2$ such that $(n, n+R) \cap S_2 = \emptyset$. This is no longer the case in 3D; Legendre's three square theorem in particular implies that every number that is $1 \pmod 4$ is the sum of three squares. As such, we have that the gaps in $S_3$ are at most size $4.$

It is for this reason we are forced to work in 3D for Theorem~\ref{thm:3d}. We also need some elementary geometry to ensure that we can both choose $b$ so that $|a+b|^2 - |a|^2 \approx 1$ while still maintaining that $b$ isn't close to parallel with $a$ and $d_k \gg d_0,d_1$ for $k \ne 0,1.$ However, once this is concluded, the argument sketch above for Theorem~\ref{thm:2d} applies and allows us to move mass across the Fourier modes $a \mapsto a+b$ with a uniformly Lipschitz velocity field and in time $\frac{1}{|u(b)||a|} \approx 1.$ Thus we (essentially) go from a Fourier mode of square magnitude $n$ to square magnitude $n+4$ in unit time. Thus wavenumber squared increases linearly, so the exponential rate of decay increases linearly, giving the $e^{-C^{-1} t^2}$ decay of Theorem~\ref{thm:4d}.

\subsubsection{Modification for Theorem~\ref{thm:4d}}

Finally let us discuss what we need to change for Theorem~\ref{thm:4d}. Since this result is stated with a qualitative rate, we can completely ignore the time scales involved: our only goal is to move mass from one pure Fourier mode to a larger pure Fourier mode while maintaining uniform $C^\infty$ regularity of the advecting flow. As discussed above, there are two ways to improve the regularity: 1) make $b$ smaller or 2) make $|\hat u(b)|$ smaller. Theorem~\ref{thm:3d} already maximally took advantage of making $|\hat u(b)|$ smaller, so we must instead focus on making $b$ smaller.

In order to make $b$ smaller, in fact unit sized throughout instead of $O(|a|)$, we add an additional dimension, working now in 4D. We take some $p \in \N$ sufficiently large, fixed throughout the argument. Then we build a flow that moves from Fourier mode $a = (m,n,\ell,p)$ to $ a+b = (m,n,\ell,-p-1)$ with $(m,n,\ell) \in \Z^3$ arbitrary. Thus we see that $b =(0,0,0, -2p-1)= O(1)$ throughout, thus this can be done with a uniform in $C^\infty$ regularity, regardless of $(m,n,\ell)$. This step however cannot be directly inducted to send our mass to infinite wavenumber. For that we need an additional step.

The considerations above that put a lower bound on how small $|\hat u(b)|$ can be while still moving the mass off of $z^0_t$ are only applicable in the case that $d_1 > d_0$. In the case that $d_1 < d_0$, we are in a rather different circumstance, where $z^1_t$ is where the mass ``wants to live''. As such, if $d_1 < d_0 \ll d_k$ for $ k \ne 0,1$, we do not even need to move all of the mass off of $z^0_t$. We can instead move \textit{some} mass onto $z^1_t$, and then wait sufficiently long. Since $d_1$ is the smallest diffusion coefficient, we will eventually be in the setting where we can apply the perturbative control of the second step, that is 
\[\frac{\sum_{k \ne 1} |z^k_t|^2}{\sum_k |z^k_t|^2} \ll 1.\]
The first push that moves some mass onto $z^1_t$ can be done with arbitrarily small magnitude, hence at arbitrary regularity, no matter how large $b$. The takeaway is the following: \textit{mass can be moved ``downhill'' at arbitrary regularity.}

Thus our second step is we move from the Fourier mode $(m,n,\ell,-p-1) \mapsto (x,y,z,p)$ where we again use Legendre's three square theorem so that $|(x,y,z,p)|^2 = |(m,n,\ell,p)| +4$, but $|(x,y,z,p)| < |(m,n,\ell,-p-1)|$, allowing this move to be done at arbitrary regularity. Combining the two steps, we have moved the mass to a Fourier mode with larger wavenumber using a velocity field that is uniformly bounded in $C^\infty.$

\subsection{Structure of the paper}

In the following Section~\ref{ss:proof of theorems from props}, we prove Theorems~\ref{thm:2d}--\ref{thm:4d} as a consequence of a sequence of propositions providing the existence of vector fields $u_t$ with the proper regularities that move mass from certain pure Fourier modes to other pure Fourier modes. It is then the goal of the remainder of the paper to prove these propositions.

Section~\ref{s:ODE} works purely with the infinite ODE system for $z^k_t$. Theorem~\ref{thm:ode-mass-moved} is the main result---phrased in the language of the ODE system $z^k_t$---which allows us to move mass ``uphill'', from a smaller wavenumber Fourier mode to a larger one. Proposition~\ref{prop:flow down hill} is the result that allows us to move mass ``downhill'' at arbitrary regularity. Theorem~\ref{thm:ode-mass-moved} is a consequence of Proposition~\ref{prop:move-all-mass-off-zero}---which codifies the first step of the sketch above in which all mass is moved off of $z^0_t$---and Corollary~\ref{cor:perturbative-control-iterated}---which codifies the second step of the sketch above in which the error is removed using the perturbative control argument. Proposition~\ref{prop:flow down hill} follows essentially similarly.

Section~\ref{s:proof of props} translates the results of Section~\ref{s:ODE} into results for the advection-diffusion equation instead of the $z^k_t$ ODE system. It then uses these translated results to prove the propositions of Section~\ref{ss:proof of theorems from props}, concluding the argument.

\subsection{Proof of Theorems~\ref{thm:2d}--\ref{thm:4d}}

\label{ss:proof of theorems from props}

The following proposition is the central ingredient to the proof of Theorem~\ref{thm:2d}.

\begin{proposition}
\label{prop:fourier to fourier 2d}
    There exists $r_0 \in \N, C,T>0$ such that for all $r \in \N$ with $r \geq r_0$, there exists a velocity field $u : [0,\infty) \times \T^2 \to \R^2$ with $\nabla \cdot u =0$ and $\|u\|_{L^\infty([0,\infty) \times \T^2)} \leq C$ such that if $\theta_0 = f_{(r,0)}$ and $\theta_t$ solves~\eqref{eq:advection-diffusion}, then there exists $\beta\in \C$ so that
    \[\theta_{T/r} = \beta f_{(0,r+1)}.\]
    Additionally, for all $t \in [0,T/r],$ we have that $\supp \hat \theta_t \subseteq \Z^2 - B_r.$ 
\end{proposition}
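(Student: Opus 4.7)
The plan is to apply the ODE-to-PDE translation of Proposition~\ref{prop:ode-to-pde} with the specific choice $a = (r, 0)$ and $b = (-r, r+1)$ (so that $a + b = (0, r+1)$), and then invoke the ``uphill'' mass-moving theorem at the ODE level, Theorem~\ref{thm:ode-mass-moved}. Concretely, one builds a velocity field of the form $u_t(x) = \sum_{j} v^j_t\, e^{i j b \cdot x} w_j$, with $w_j \in \R^2$ perpendicular to $b$ and $v^{-j}_t = \overline{v^j_t}$; this automatically ensures incompressibility, realness of $u$, and $\supp \hat\theta_t \subseteq \{a + kb : k \in \Z\}$. The scalar sequence $v^j_t$ will be produced by Theorem~\ref{thm:ode-mass-moved}.

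The first substantive step is to verify the hypotheses of Theorem~\ref{thm:ode-mass-moved} for this $(a, b)$. The angle between $a$ and $b$ tends to $3\pi/4$ as $r \to \infty$, so $b$ is uniformly far from parallel to $a$ and the prefactor on $\hat u(b)$ in the reduced ODE is $O(1)$. The diffusion coefficients are
\[ d_k \;=\; |a + kb|^2 \;=\; (1-k)^2 r^2 \,+\, k^2 (r+1)^2, \]
so $d_0 = r^2$, $d_1 = r^2 + 2r + 1$, and $d_k \geq 5 r^2 + O(r)$ for $k \notin \{0, 1\}$. Thus $d_k \gg d_0, d_1$ for $k \neq 0, 1$ uniformly in $r$, exactly matching the ``strong diffusive suppression of all modes except $z^0, z^1$'' condition highlighted in the overview. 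The critical wavenumber gap is $D := d_1 - d_0 = 2r + 1$, and the size of the required Fourier coefficient is $|\hat u(b)| \approx D/|a| = (2r+1)/r = O(1)$; this is precisely the reason one obtains uniform $L^\infty$ control on $u$ but no better.

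Given these inputs, Theorem~\ref{thm:ode-mass-moved} produces controls $v^j_t$, only finitely many nonzero, with $\|v^j\|_{L^\infty}$ bounded uniformly in $r$, supported on a time interval of length $\approx 1/(|\hat u(b)|\, |a|) \approx 1/r$, and such that the corresponding ODE solution satisfies $z^k_{T/r} = \beta \delta_{k,1}$. Via Proposition~\ref{prop:ode-to-pde} this translates into $\theta_{T/r} = \beta f_{(0, r+1)}$. The uniform $L^\infty$ bound on $u$ follows by summing the finitely many nonzero Fourier modes. The support condition $\supp \hat\theta_t \subseteq \Z^2 \setminus B_r$ is immediate: $\hat\theta_t$ is supported in $\{a + kb : k \in \Z\}$ and each such lattice point has squared magnitude $d_k \geq r^2$.

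The main obstacle is really Theorem~\ref{thm:ode-mass-moved} itself (the two-step procedure: first moving all mass off of $z^0_t$, then removing the resulting perturbative error by a Newton-type dynamic control argument), which is carried out in Section~\ref{s:ODE}. For this proposition, the remaining work beyond quoting that theorem is the elementary verification of the diffusion-coefficient inequalities above, together with careful tracking of how the ODE-level time scale rescales into the PDE time $T/r$ through the factor $|a| = r$ encoded in the reduction.
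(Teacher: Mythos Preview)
Your overall strategy is the same as the paper's: pick $a=(r,0)$, $b=(-r,r+1)$, reduce to the infinite ODE system via Proposition~\ref{prop:ode-to-pde}, and invoke Theorem~\ref{thm:ode-mass-moved}. However, two steps in your outline are genuinely wrong, not merely under-detailed.

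\textbf{Missing normalization of the diffusion coefficients.} Theorem~\ref{thm:ode-mass-moved} requires $d_0=0$ and $d_1=1$ exactly (together with the rest of Assumption~\ref{asmp:main-ode}). Your coefficients $d_k=|a+kb|^2$ give $d_0=r^2$ and $d_1=(r+1)^2$, so the theorem does not apply as written. The paper uses the free parameters $A,L$ in Proposition~\ref{prop:ode-to-pde} to set
\[
d_k=\frac{|a+kb|^2}{L}-A,\qquad L=|a+b|^2-|a|^2=2r+1,\qquad A=\frac{|a|^2}{L},
\]
which forces $d_0=0,\ d_1=1$. This is also where the time scale $T/r$ actually comes from: Theorem~\ref{thm:ode-mass-moved} produces a \emph{universal} ODE time $T$, and the PDE time is $T/L=T/(2r+1)\asymp T/r$ via the $\theta_{t/L}$ relation in Proposition~\ref{prop:ode-to-pde}. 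Your attribution of the rescaling to ``the factor $|a|=r$'' is not the mechanism at work. After this normalization one must also check the remaining items of Assumption~\ref{asmp:main-ode} (the bounds on $M$, $S$, and $d_{k+1}-d_{1-k}$), not only that $d_k\gg d_0,d_1$.

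\textbf{The coefficient field is not finitely supported.} The second step of Theorem~\ref{thm:ode-mass-moved} (the perturbative control / Newton iteration) uses $v^k_t$ for \emph{all} $k$; the conclusion~\eqref{eq:v regularity theorem} gives decay $|v^k_t|\le C(n)(1+d_{1-k})^{-n}e^{-M}+C(\delta_{k,1}+\delta_{k,-1})$, not finite support. Hence your sentence ``the uniform $L^\infty$ bound on $u$ follows by summing the finitely many nonzero Fourier modes'' does not go through. The correct argument sums the decay estimate over $k$ (this is where the summability $S\le 6$ enters), combines it with $\|u\|_{L^\infty}\le \frac{L}{\alpha_{a,b}|a|}\sum_k|v^k_t|$ from Definition~\ref{defn:1d-to-any-d}/Proposition~\ref{prop:ode-to-pde}, and uses $L/|a|=(2r+1)/r=O(1)$ together with the uniform lower bound $\alpha_{a,b}^2\ge 1/2$.
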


The following corollary is direct from applying the above proposition twice, the second time with the $x$ and $y$ coordinates swapped.

\begin{corollary}
    \label{cor:fourier to fourier 2d}
    There exists $r_0 \in \N, C,T>0$ such that for all $r \in \N$ with $r \geq r_0$, there exists a velocity field $u : [0,\infty) \times \T^2 \to \R^2$ with $\nabla \cdot u =0$ and $\|u\|_{L^\infty([0,\infty)\times\T^2)} \leq C$ such that if $\theta_0 = f_{(r,0)}$ and $\theta_t$ solves~\eqref{eq:advection-diffusion}, then there exists $\beta\in \C$ so that
    \[\theta_{T/r} = \beta f_{(r+2,0)}.\]
    Additionally, for all $t \in [0,T/r],$ we have that $\supp \hat \theta_t \subseteq \Z^2 - B_r.$ 
\end{corollary}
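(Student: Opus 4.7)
The plan is to concatenate two applications of Proposition~\ref{prop:fourier to fourier 2d}: the first moves mass from $f_{(r,0)}$ to a multiple of $f_{(0,r+1)}$, and the second moves mass from $f_{(0,r+1)}$ to a multiple of $f_{(r+2,0)}$ by using the coordinate-swap symmetry of the equation. Let $T_0,C_0,r_0$ be the constants produced by Proposition~\ref{prop:fourier to fourier 2d}; throughout, assume $r\ge r_0$.

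\emph{First leg.} Direct application of Proposition~\ref{prop:fourier to fourier 2d} on $[0,T_0/r]$ yields a divergence-free $u^{(1)}$ with $\|u^{(1)}\|_{L^\infty}\le C_0$, so that the solution with $\theta_0=f_{(r,0)}$ satisfies $\theta_{T_0/r}=\beta_1 f_{(0,r+1)}$ for some $\beta_1\in\C$, and $\supp\hat\theta_t\subseteq\Z^2\setminus B_r$ on $[0,T_0/r]$.

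\emph{Second leg via coordinate swap.} Let $\sigma(x_1,x_2):=(x_2,x_1)$. Then $\sigma$ commutes with $\Delta$, sends $f_{(a,b)}$ to $f_{(b,a)}$, and if $v$ is divergence-free then so is $(\sigma\cdot v)\circ\sigma$ (i.e.\ swap components and swap the argument). Apply Proposition~\ref{prop:fourier to fourier 2d} at parameter $r+1$ (which is $\ge r_0$) to get $\tilde u$ on $[0,T_0/(r+1)]$ sending $f_{(r+1,0)}$ to $\gamma f_{(0,r+2)}$. Define
\[
u^{(2)}_t(x):=(\sigma\cdot\tilde u_{t-T_0/r})(\sigma x),\qquad t\in[T_0/r,\,T_0/r+T_0/(r+1)].
\]
Then $u^{(2)}$ is divergence-free with $\|u^{(2)}\|_{L^\infty}\le C_0$, and by the $\sigma$-conjugacy of the advection-diffusion equation, evolving $f_{(0,r+1)}$ under $u^{(2)}$ produces $\gamma f_{(r+2,0)}$ at the endpoint, with Fourier support in $\Z^2\setminus B_{r+1}\subseteq\Z^2\setminus B_r$ throughout this leg (apply the support property of the proposition to the $\sigma$-conjugated solution, noting all modes involved have magnitude $\ge r+1$).

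\emph{Concatenation.} Define $u$ to equal $u^{(1)}$ on $[0,T_0/r]$, $u^{(2)}$ on $[T_0/r,\,T_0/r+T_0/(r+1)]$, and $0$ on $[T_0/r+T_0/(r+1),\,2T_0/r]$; extend arbitrarily afterward. By linearity in the initial data, the solution with $\theta_0=f_{(r,0)}$ satisfies $\theta_{T_0/r+T_0/(r+1)}=\beta_1\gamma\, f_{(r+2,0)}$. On the final quiescent subinterval the solution undergoes pure diffusion, which preserves the Fourier support $\{(r+2,0)\}$ (just rescaling by $e^{-(r+2)^2 s}$), so $\theta_{2T_0/r}=\beta f_{(r+2,0)}$ for some $\beta\in\C$. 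The support condition $\supp\hat\theta_t\subseteq\Z^2\setminus B_r$ holds on each subinterval by construction. Setting $T:=2T_0$ and $C:=C_0$ gives the claimed conclusion.

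\emph{Expected obstacle.} There is no substantive obstacle — the only point requiring care is verifying that the coordinate swap genuinely conjugates the advection-diffusion equation (using $\Delta\circ\sigma=\sigma\circ\Delta$ and the fact that $(\sigma\cdot v)\circ\sigma\cdot\nabla(\theta\circ\sigma)=((v\cdot\nabla\theta)\circ\sigma)$, together with $\sigma$ preserving the divergence-free constraint), and bookkeeping that the total time fits within $T/r$ for $T=2T_0$ since $T_0/r+T_0/(r+1)\le 2T_0/r$.
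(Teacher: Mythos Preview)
The proposal is correct and matches the paper's approach exactly: the paper's entire proof is the one-line remark that the corollary ``is direct from applying the above proposition twice, the second time with the $x$ and $y$ coordinates swapped,'' and you have supplied precisely those details, including the padding by zero velocity on $[T_0/r+T_0/(r+1),\,2T_0/r]$ to obtain a uniform $T=2T_0$.
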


The following proposition is the central ingredient to the proof of Theorem~\ref{thm:3d}.

\begin{proposition}
\label{prop:fourier to fourier 3d}
    There exists $r_0 \in \N,C,T >0$ such that for all $(m,n,\ell) \in \Z^3$ such that $|(m,n,\ell)| \geq r_0$, there exists $(x,y,z) \in \Z^3$ with $|(m,n,\ell)|^2+1 \leq |(x,y,z)|^2 \leq |(m,n,\ell)|^2 +8$ and a velocity field $u: [0,\infty) \times \T^3 \to \R^3$ with $\nabla \cdot u = 0$ and $\|u\|_{L^\infty([0,\infty), W^{1,\infty}(\T^3))} \leq C$ such that if $\theta_0 = f_{(m,n,\ell)}$ and $\theta_t$ solves~\eqref{eq:advection-diffusion}, then there exists $\beta \in \C$ so that
    \[\theta_T = \beta f_{(x,y,z)}.\]
    Additionally, for all $t \in [0,T]$, we have that $\supp \hat \theta_t \subseteq \Z^3 - B_{|(m,n,\ell)|}.$
\end{proposition}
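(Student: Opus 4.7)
The argument closely parallels that behind Proposition~\ref{prop:fourier to fourier 2d}; the essential new ingredient is an arithmetic step that selects the Fourier difference vector $b$. Write $a = (m,n,\ell)$, $r = |a|$, and $d_k := |a+kb|^2$. The outline is: (1) choose $b \in \Z^3$ so that $d_1 - d_0 \in \{1,\ldots,8\}$, $|b \times a| \gtrsim r^2$ (transversality), and $d_k - d_0 \gtrsim k^2 r^2$ for $|k| \geq 2$; (2) reduce to the ODE system via Proposition~\ref{prop:ode-to-pde}; (3) invoke Theorem~\ref{thm:ode-mass-moved} to produce the controls $v^j_t$; (4) verify the regularity, incompressibility, and support claims.

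\textbf{Step 1: selecting $b$.} By Legendre's three-square theorem, an integer fails to lie in $S_3 := \{|k|^2 : k \in \Z^3\}$ only if it has the form $4^s(8t+7)$, so gaps in $S_3$ are bounded and some $N \in \{r^2+1,\ldots, r^2+8\}$ lies in $S_3$. On the sphere of radius $\sqrt{N}$, I pick a lattice point $c$ whose direction is uniformly separated from $\pm a/r$: for $r$ large this is possible by a pigeonhole argument on spherical caps, since the number of lattice points on the sphere grows with $r$ and they cannot all accumulate in a single small conical neighborhood of $\pm a$. Setting $b := c - a$, we have $D := d_1 - d_0 = N - r^2 \in \{1,\ldots,8\}$ and $|b| = \Theta(r)$, and the transversality bound $|b \times a| \gtrsim r^2$ follows from the angular separation of $c$ from $a$. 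Property (iii) then follows by expanding $d_k - d_0 = 2k\, a\cdot b + k^2 |b|^2$: since $|b|^2 \gtrsim r^2$ (forced by transversality) and $|a\cdot b| \lesssim r^2$, the quadratic in $k$ is bounded below by $\gtrsim k^2 r^2$ for $|k| \geq 2$. The target of the proposition is $(x,y,z) := c$.

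\textbf{Steps 2--3: reduction and ODE-level control.} Proposition~\ref{prop:ode-to-pde}, applied with the vectors $a,b$, reduces the construction of a flow $u_t(x) = v_t(b\cdot x)$ to building coefficients $v^j : [0,T] \to \C$ (with $v^{-j} = \overline{v^j}$) such that the infinite system
\[
\dot z^k_t = -d_k z^k_t + i \sum_j v^j_t\, z^{k-j}_t, \qquad z^k_0 = \delta_{k,0},
\]
terminates at $z^k_T = \beta \delta_{k,1}$. Properties (i) and (iii) give exactly the hypotheses of Theorem~\ref{thm:ode-mass-moved}: the diffusion gap $d_1 - d_0 \leq 8 = O(1)$ is bounded while $d_k \gg d_0, d_1$ for $k \neq 0,1$. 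The theorem produces $v^j$ with $T = O(1)$ and $\sup_j \|v^j\|_{L^\infty_t} \lesssim D/r \lesssim 1/r$. Translating back, the velocity field $u_t(x) = v_t(b \cdot x)$ satisfies $\|\nabla u_t\|_{L^\infty} \lesssim |b|\, \|v_t\|_{L^\infty} \lesssim r \cdot (1/r) = 1$, giving the $L^\infty_t W^{1,\infty}_x$ bound uniformly in $a$. Incompressibility is enforced by Proposition~\ref{prop:ode-to-pde}, where transversality (ii) ensures the divergence-free projection contributes only an $O(1)$ prefactor rather than degenerating to $0$. Finally, $\hat \theta_t$ is supported in $\{a + kb : k \in \Z\}$, and properties (i) and (iii) together with $d_0 = r^2$ give $|a+kb|^2 \geq r^2$ for every $k$, so $\supp \hat \theta_t \subseteq \Z^3 \setminus B_r$ throughout $[0,T]$.

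\textbf{Main obstacle.} The heart of the argument, and the reason Theorem~\ref{thm:3d} requires $d \geq 3$, is Step~1. Keeping $u$ uniformly bounded in $L^\infty_t W^{1,\infty}_x$ essentially demands $D = d_1 - d_0 = O(1)$, which is what fails in 2D: by Landau-style results the gaps in $S_2 = \{|k|^2 : k \in \Z^2\}$ are unbounded, so a jump of bounded squared-norm need not be realizable for given $a \in \Z^2$. Legendre gives such a bounded increment in 3D, but one then must simultaneously secure the transversality (ii) and the spectral-gap condition (iii) on the parabola $k \mapsto d_k$; these are geometric, and their compatibility with (i) is the delicate point to verify. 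All other steps reuse the machinery of Section~\ref{s:ODE} essentially verbatim once the correct $b$ is identified.
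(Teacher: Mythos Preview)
Your overall architecture is right and matches the paper: choose $c=(x,y,z)$ with $|c|^2-|a|^2\in\{1,\dots,8\}$ via Legendre, set $b=c-a$, verify the geometric hypotheses on the parabola $k\mapsto |a+kb|^2$, and then feed everything into Corollary~\ref{cor:move mass theta} (which packages Proposition~\ref{prop:ode-to-pde} and Theorem~\ref{thm:ode-mass-moved}). Steps~2--4 are essentially correct, modulo the rescaling $(L,A)$ that normalizes $d_0=0,\ d_1=1$; you absorb this silently into ``$\sup_j\|v^j\|\lesssim D/r$'', which is not what Theorem~\ref{thm:ode-mass-moved} literally says, but the bookkeeping works out once the $L/(\alpha_{a,b}|a|)$ factor from Definition~\ref{defn:1d-to-any-d} is inserted.

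The genuine gap is in Step~1, in how you secure transversality. Your claim that ``the number of lattice points on the sphere grows with $r$ and they cannot all accumulate in a single small conical neighborhood of $\pm a$'' is not justified and, as stated, appeals to something like equidistribution of lattice points on spheres (Linnik--Duke), which is both deep and unnecessary here. For a \emph{specific} $N\in\{r^2+1,\dots,r^2+8\}$ the count $r_3(N)$ need not be large, so a pigeonhole over spherical caps does not go through. What actually works---and what the paper does---is to exploit the signed-permutation symmetry of $\Z^3$: after reducing by symmetry to $0\le m\le n\le\ell$ and writing the Legendre representative as $0\le\tilde x\le\tilde y\le\tilde z$, one \emph{sets} $c=(\tilde x,-\tilde z,\tilde y)$. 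Then $a/|a|$ lies in a fixed closed set $A\subset S^2$ and $c/|c|$ in a fixed closed set $C\subset S^2$ with $A=-A$, $C=-C$, and $A\cap C=\emptyset$. A short compactness lemma (Lemma~\ref{lem:sphere-geometry}) then gives, uniformly in $a$, the three things you need: $\alpha_{a,b}\ge K^{-1}$, $|a+kb|\ge K^{-1}|a||k|+|a|$ for $k\ne0,1$, and the separation $d_{k+1}-d_{1-k}\ge K^{-1}|a|k$ (which is Item~\ref{item:dk separated} of Assumption~\ref{asmp:main-ode}, a hypothesis of Theorem~\ref{thm:ode-mass-moved} that you do not mention). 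This explicit choice of $c$ replaces your pigeonhole entirely and keeps the argument elementary.
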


The following proposition is one of the central ingredients to the proof of Theorem~\ref{thm:4d}, providing the step that moves the Fourier mass ``uphill''.

\begin{proposition}
    \label{prop:fourier to fourier 4d-1}
    There exists $p \in \N,T>0$, such that $p \geq 10$ and for all $(m,n,\ell) \in \Z^3$ such that $(m,n,\ell) \ne 0$, there exists a velocity field $u : [0,\infty) \times \T^4 \to \R^4$ with $\nabla \cdot u =0$ and for all $s \in \N$ there exists $C(s)>0$ such that $\|u\|_{L^\infty([0,\infty), W^{s,\infty}(\T^4))} \leq C(s)$ and such that if $\theta_0 = f_{(m,n,\ell,p)}$ and $\theta_t$ solves~\eqref{eq:advection-diffusion}, then there exists $\beta \in \C$ so that
    \[\theta_T = \beta f_{(m,n,\ell,-p-1)}.\]
    Additionally, for all $t \in [0,T],$ we have that $\supp \hat \theta_t \subseteq \Z^4 - B_{|(m,n,\ell,p)|}$.
\end{proposition}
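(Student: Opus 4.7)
The plan is to apply Theorem~\ref{thm:ode-mass-moved} (the ``uphill'' ODE result) with the choices $a = (m,n,\ell,p) \in \Z^4$ and $b = (0,0,0,-(2p+1)) \in \Z^4$, which give $a+b = (m,n,\ell,-p-1)$ as required, and then to translate the resulting ODE controls into a velocity field via Proposition~\ref{prop:ode-to-pde}. The feature that yields uniform $C^\infty$ regularity is that $b$ has fixed magnitude $2p+1$, independent of $(m,n,\ell)$, so the constructed $u_t$ will have Fourier support in the fixed finite set $\{jb : |j| \leq J_0(p)\}$.

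First I verify the diffusive separation hypothesis of Theorem~\ref{thm:ode-mass-moved}. Writing $d_k := |a+kb|^2 = m^2+n^2+\ell^2 + (p-k(2p+1))^2$, one computes $d_0 = |a|^2$ and $d_1 - d_0 = 2p+1$ (so the step is genuinely uphill), while for $k \in \Z\setminus\{0,1\}$ the factor $|p-k(2p+1)|$ is at least $3p+1$, yielding $d_k - d_1 \geq (3p+1)^2 - (p+1)^2 = 8p^2 + 4p$. This gap is uniform in $(m,n,\ell)$ and can be made as large as the theorem requires by taking $p$ sufficiently large, so the modes $k \neq 0,1$ are strongly diffusively damped relative to $z^1$. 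Moreover $b$ is far from parallel to $a$ since $b$ vanishes in its first three coordinates while $(m,n,\ell) \neq 0$.

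Applying Theorem~\ref{thm:ode-mass-moved} then produces controls $v^j_t : [0,T] \to \C$ (with $T$ depending only on $p$) driving the ODE system to a final state $z^k_T = \beta\delta_{k,1}$, with $|v^j_t|$ bounded by a constant $C(p)$ uniform in $(m,n,\ell)$. To promote these scalar controls to an incompressible velocity field in the sense of Proposition~\ref{prop:ode-to-pde}, I write $u_t(x) = \sum_j \hat v_t(j) \, e^{ij b \cdot x}$ and require $\hat v_t(j) \in \C^4$ to satisfy $\hat v_t(j) \cdot b = 0$ (incompressibility) together with $\hat v_t(j) \cdot a = v^j_t$ (to realize the prescribed ODE transfer coefficient). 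Since $b$ points purely in the fourth coordinate direction, the constraint $\hat v_t(j)\cdot b = 0$ reduces to $(\hat v_t(j))_4 = 0$, and the explicit choice
\[
\hat v_t(j) \;:=\; \frac{v^j_t}{|(m,n,\ell)|^2} \, (m,n,\ell,0)
\]
satisfies both requirements and has $|\hat v_t(j)| = |v^j_t|/|(m,n,\ell)| \leq C(p)$. Reality of $u_t$ is enforced by the symmetry $v^{-j}_t = \overline{v^j_t}$ which the ODE theorem produces, combined with reality of $a$.

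Since $u_t$ has Fourier support only on the fixed finite set $\{jb : |j| \leq J_0(p)\}$, we obtain $\|u_t\|_{W^{s,\infty}(\T^4)} \leq C(s,p)$ for every $s \in \N$, uniform in $(m,n,\ell)$. The Fourier-support condition $\supp \hat\theta_t \subseteq \Z^4 \setminus B_{|a|}$ is immediate: the reduction $u_t(x) = v_t(b \cdot x)$ preserves the coset $a + \Z b$ in Fourier space, and the computation $d_k \geq d_0 = |a|^2$ for all $k \in \Z$ shows every mode in this coset lies outside $B_{|a|}$. I expect the main technical obstacle to be the careful unpacking of Proposition~\ref{prop:ode-to-pde} to confirm that this recipe for $\hat v_t(j)$ produces precisely the velocity field whose advection matches the ODE system, together with verifying that the quantitative hypotheses of Theorem~\ref{thm:ode-mass-moved} (particularly the dominance of $d_k - d_1$ over $d_1 - d_0$ for $k \neq 0,1$) hold uniformly once $p$ is fixed large; both reduce to the elementary estimates above.
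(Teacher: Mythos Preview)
Your overall strategy coincides with the paper's: take $a=(m,n,\ell,p)$, $b=(0,0,0,-(2p+1))$, apply the uphill ODE result, and translate to a velocity field. The paper packages this translation as Corollary~\ref{cor:move mass theta}, but you are essentially reproducing its content, and your explicit formula for $\hat v_t(j)$ agrees with the paper's $w_{a,b,v,L}$ up to the time rescaling.

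There is, however, a genuine gap in your regularity argument. You assert that $u_t$ has Fourier support in a fixed \emph{finite} set $\{jb:|j|\le J_0(p)\}$, and derive the $W^{s,\infty}$ bounds from that. This is false. In the first phase of the construction (Proposition~\ref{prop:move-all-mass-off-zero}) only $v^{\pm1}$ are nonzero, but the perturbative cleanup phase (Corollary~\ref{cor:perturbative-control-iterated}) activates $v^j$ for \emph{every} $j\in\Z\setminus\{0\}$, since the error must be driven to zero on every mode $z^{1\pm j}$. A mere uniform bound $|v^j_t|\le C(p)$ is then useless: $\sum_j |jb|^s|\hat v_t(j)|$ diverges. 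What Theorem~\ref{thm:ode-mass-moved} actually gives you is the decay bound~\eqref{eq:v regularity theorem}, namely $|v^j_t|\le C(n)(1+d_{1-j})^{-n}e^{-M}+C\delta_{|j|,1}$. After the required normalization $d_k=(|a+kb|^2-|a|^2)/L$ with $L=2p+1$ (which you also omit, and without which $d_0=0,d_1=1$ fails and the theorem does not apply), one has $d_{1-j}\gtrsim p\,j^2$ for $|j|\ge2$, so $\sum_j|j|^s|v^j_t|\le C(s,p)$, and the regularity follows. The conclusion survives, but via decay in $j$, not finite support.
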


The following proposition is one of the central ingredients to the proof of Theorem~\ref{thm:4d}, providing the step that moves the Fourier mass ``downhill''.

\begin{proposition}
    \label{prop:fourier to fourier 4d-2}
    There exists $r_0 \in \N, T>0$ such that for all $(m,n,\ell) \in \Z^3$ with $|(m,n,\ell)| \geq r_0$, there exists $(x,y,z) \in \Z^3$ with $|(m,n,\ell)|^2 +1 \leq |(x,y,z)|^2 \leq |(m,n,\ell)|^2 +8$ and a velocity field $u : [0,\infty) \times \T^4 \to \R^4$ with $\nabla \cdot u =0$ and for all $s \in \N$, there exists $C(s)>0$ such that $\|u\|_{L^\infty([0,\infty), W^{s,\infty}(\T^4)} \leq C(s)$ and such that if $\theta_0 = f_{(m,n,\ell,-p-1)}$ with $p$ as in Proposition~\ref{prop:fourier to fourier 4d-1} and $\theta_t$ solves~\eqref{eq:advection-diffusion}, then there exists $\beta \in \C$ and $T>0$ so that
    \[\theta_T = \beta f_{(x,y,z,p)}.\]
    Additionally, for all $t \in [0,T]$, we have that $\supp \hat \theta_t \subseteq \Z^4 - B_{|(x,y,z,p)|}.$
\end{proposition}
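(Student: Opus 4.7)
Write $a := (m,n,\ell,-p-1)$. Since
\[|(x,y,z,p)|^2 - |a|^2 = \bigl(|(x,y,z)|^2 - |(m,n,\ell)|^2\bigr) - (2p+1),\]
the hypothesis $p \ge 10$ together with the target norm constraint $|(x,y,z)|^2 - |(m,n,\ell)|^2 \in [1,8]$ forces $|(x,y,z,p)| < |a|$, so the move is genuinely ``downhill''. To produce $(x,y,z)$, I would invoke Legendre's three-square theorem (any positive integer not of the form $4^a(8b+7)$ is a sum of three squares) to show that among the eight integers $|(m,n,\ell)|^2 + N$, $N \in \{1,\dots,8\}$, at least one admits a three-square representation once $|(m,n,\ell)|$ is large. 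Since the number of lattice points on a sphere of radius $R$ in $\R^3$ tends to infinity along the admissible residue classes, I would further select a representation $(x,y,z)$ that is quantitatively far from collinear with $(m,n,\ell)$, ensuring that $b := (x-m, y-n, z-\ell, 2p+1)$ is bounded away from parallel to $a$ by a constant independent of the data.

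\textbf{Step 2: Verifying the ODE hypotheses.} With $b$ fixed, I would take an advecting flow of the form $u_t(y) = V_t(b\cdot y)$ for some $V_t \in \R^4$ orthogonal to $b$, so that Proposition~\ref{prop:ode-to-pde} reduces the PDE to the infinite ODE system for $z^k_t$ with diffusion coefficients $d_k = |a + kb|^2$. The fourth coordinate of $a + kb$ is the affine function $k \mapsto -(p+1) + k(2p+1)$, giving $d_0 = |a|^2$, $d_1 = |(x,y,z,p)|^2$, and for $k \notin \{0,1\}$ the lower bound
\[d_k \ge \bigl((2k-1)(p+1)\bigr)^2 - O(1),\]
which combined with the genericity in Step 1 for the first three coordinates ensures $d_1$ is the unique strict minimum of $\{d_k\}_{k \in \Z}$ and that $d_k - d_1 \to \infty$ as $|k| \to \infty$. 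Together with the transversality of $b$ to $a$, these are precisely the structural inputs required by Proposition~\ref{prop:flow down hill}.

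\textbf{Step 3: Invoking the downhill flow and lifting to $C^\infty$.} Proposition~\ref{prop:flow down hill} then yields controls $v^j_t$ (with $v^{-j}_t = \overline{v^j_t}$) driving $z^k_0 = \delta_{k,0}$ to $z^k_T = \beta \delta_{k,1}$, i.e., all the mass onto the target mode. The essential feature of the downhill case, as outlined in the overview, is that the amplitude of $v$ may be taken arbitrarily small at the cost of enlarging $T$: a tiny initial push deposits some mass on $z^1$, the heat semigroup $e^{-d_k t}$ then automatically concentrates the mass on $k=1$ since $d_1$ is the strict minimum, and the perturbative Newton cleanup requires only small controls. Lifting via $u_t(y) = V_t(b \cdot y)$ gives $\|D^s u_t\|_{L^\infty} \lesssim |b|^s \|V_t\|_{L^\infty}$, so by demanding $\|V_t\|_{L^\infty} \le (1+|b|)^{-s}$ for each $s$ (compensated by a longer $T$) one obtains a uniform bound $\|u\|_{L^\infty_t W^{s,\infty}_x} \le C(s)$ independent of $(m,n,\ell)$.

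\textbf{Step 4: Support condition.} Because $u_t$ depends on $y$ only through $b \cdot y$ and $\theta_0 = f_a$, the solution $\theta_t$ remains supported on $a + \Z b$ throughout. The required containment $\supp \hat \theta_t \subseteq \Z^4 - B_{|(x,y,z,p)|}$ then reduces to $|a + kb| \ge |(x,y,z,p)| = \sqrt{d_1}$ for every $k \in \Z$, which is exactly the minimum property established in Step 2. The main obstacle I anticipate is the compatibility between the number-theoretic selection of Step 1 and the geometric separation of the $d_k$ in Step 2, uniformly as $|(m,n,\ell)| \to \infty$; getting the $C^\infty$ bound uniform in the data then hinges on Proposition~\ref{prop:flow down hill} being quantitatively flexible enough to trade control amplitude for time at every scale.
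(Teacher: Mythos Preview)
Your overall plan---reduce to the line $a + \Z b$, invoke the downhill control of Proposition~\ref{prop:flow down hill}, and use the free amplitude parameter to absorb powers of $|b|$---matches the paper's route through Corollary~\ref{cor:move mass theta downhill}. There are, however, two gaps.

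First, your selection of $(x,y,z)$ in Step~1 is not justified as written. You claim that because the number of lattice points on a sphere grows one can pick a representation ``quantitatively far from collinear'' with $(m,n,\ell)$; but ruling out that all representations cluster near a fixed direction is essentially equidistribution of lattice points on spheres, a deep theorem and vastly more than is needed here. The paper avoids this with an elementary device already deployed in the proof of Proposition~\ref{prop:fourier to fourier 3d}: reduce by symmetry to $0 \le m \le n \le \ell$, take \emph{any} three-square representation $0 \le \tilde x \le \tilde y \le \tilde z$ of the target norm, and set $(x,y,z) = (\tilde x, -\tilde z, \tilde y)$. Then $(m,n,\ell)/|(m,n,\ell)|$ and $(x,y,z)/|(x,y,z)|$ lie in two fixed disjoint closed antipodal subsets of $S^2$, and the soft compactness argument of Lemma~\ref{lem:sphere-geometry} delivers uniform transversality together with the separation $d_{k+1} - d_{1-k} \ge 1$ (Item~4 of Assumption~\ref{asmp:main-ode}, which you also do not address) with no number theory beyond Legendre.

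Second, in Step~3 the requirement $\|V_t\|_{L^\infty} \le (1+|b|)^{-s}$ ``for each $s$'' taken literally forces $V_t \equiv 0$ as soon as $|b| > 0$, since the right side tends to $0$ in $s$. You need a \emph{single} amplitude beating every polynomial in $|b|$; the paper chooses $\eta = e^{-|a|}\alpha_{a,b}$ in Corollary~\ref{cor:move mass theta downhill}, which simultaneously absorbs the factor $\alpha_{a,b}^{-1}$ in the regularity estimate (note $\alpha_{a,b}$ is only nonzero here, not uniformly bounded below, so it must be handled this way rather than discarded).
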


The following corollary is direct from applying Proposition~\ref{prop:fourier to fourier 4d-1} and then Proposition~\ref{prop:fourier to fourier 4d-2}.
\begin{corollary}
\label{cor:fourier to fourier 4d}
    There exists $r_0, p \in \N$ such that for all $(m,n,\ell) \in \Z^3$ with $|(m,n,\ell)| \geq r_0$, there exists $(x,y,z) \in \Z^3$ with $|(m,n,\ell)|^2 + 1 \leq |(x,y,z)|^2 \leq |(m,n,\ell)|^2 +8$ and a velocity field $u : [0,\infty) \times \T^4 \to \R^4$ with $\nabla \cdot u =0$ and for all $s \in \N$, there exists $C(s)>0$ such that $\|u\|_{L^\infty([0,\infty), W^{s,\infty}(\T^4)} \leq C(s)$ and such that if $\theta_0 = f_{(m,n,\ell,p)}$ and $\theta_t$ solves~\eqref{eq:advection-diffusion}, then there exists $\beta \in \C$ and $T>0$ so that
    \[\theta_T = \beta f_{(x,y,z,p)}.\]
    Additionally, for $t \in [0,T]$, we have that $\supp \hat \theta_t \subseteq \Z^4 - B_{|(m,n,\ell,p)|}.$ 
\end{corollary}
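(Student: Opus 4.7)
The plan is a direct two-step concatenation. Let $p, T_1$ and $u^{(1)}$ be as in Proposition~\ref{prop:fourier to fourier 4d-1}, applied with the given $(m,n,\ell) \in \Z^3$. This requires only $(m,n,\ell) \ne 0$, which follows from $|(m,n,\ell)| \geq r_0 \geq 1$. Proposition~\ref{prop:fourier to fourier 4d-1} produces $\beta_1 \in \C$ with $\theta_{T_1}^{(1)} = \beta_1 f_{(m,n,\ell,-p-1)}$, where $\theta_t^{(1)}$ is the solution of \eqref{eq:advection-diffusion} with velocity $u^{(1)}$ and initial data $f_{(m,n,\ell,p)}$. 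Next, take $r_0$ and the velocity field $u^{(2)}$ of Proposition~\ref{prop:fourier to fourier 4d-2} (with $p$ fixed as above), applied with the same $(m,n,\ell)$; this provides $(x,y,z) \in \Z^3$ with $|(m,n,\ell)|^2 + 1 \leq |(x,y,z)|^2 \leq |(m,n,\ell)|^2 + 8$, a time $T_2 > 0$, and $\beta_2 \in \C$ such that the solution with initial data $f_{(m,n,\ell,-p-1)}$ reaches $\beta_2 f_{(x,y,z,p)}$ at time $T_2$. Take $r_0$ in the corollary to be the maximum of the $r_0$ from the two propositions.

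Define the concatenated velocity field
\[
u(t,x) := \begin{cases} u^{(1)}(t,x), & t \in [0,T_1], \\ u^{(2)}(t-T_1,x), & t \in (T_1, T_1 + T_2], \end{cases}
\]
extended by $0$ afterward, and set $T := T_1 + T_2$. Linearity of \eqref{eq:advection-diffusion} gives that the solution with initial data $f_{(m,n,\ell,p)}$ satisfies $\theta_{T_1} = \beta_1 f_{(m,n,\ell,-p-1)}$, and then, by linearity again applied to the dynamics of $u^{(2)}$ on $[T_1,T]$, $\theta_T = \beta_1 \beta_2 f_{(x,y,z,p)}$. Setting $\beta := \beta_1\beta_2$ gives the claimed pure-mode conclusion. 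The incompressibility $\nabla \cdot u = 0$ is inherited from each piece. For the Sobolev bounds, if the two propositions yield constants $C^{(1)}(s), C^{(2)}(s)$, then $\|u\|_{L^\infty([0,\infty), W^{s,\infty}(\T^4))} \leq \max(C^{(1)}(s), C^{(2)}(s)) =: C(s)$, so the qualitative $C^\infty$ regularity is preserved.

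It remains to verify the support condition. On $[0,T_1]$, Proposition~\ref{prop:fourier to fourier 4d-1} gives $\supp \hat\theta_t \subseteq \Z^4 - B_{|(m,n,\ell,p)|}$. On $[T_1,T]$, Proposition~\ref{prop:fourier to fourier 4d-2} gives $\supp \hat\theta_t \subseteq \Z^4 - B_{|(x,y,z,p)|}$. Since
\[
|(x,y,z,p)|^2 = |(x,y,z)|^2 + p^2 \geq |(m,n,\ell)|^2 + 1 + p^2 > |(m,n,\ell,p)|^2,
\]
we have $B_{|(m,n,\ell,p)|} \subsetneq B_{|(x,y,z,p)|}$, hence $\Z^4 - B_{|(x,y,z,p)|} \subseteq \Z^4 - B_{|(m,n,\ell,p)|}$, and the desired inclusion holds uniformly in $t \in [0,T]$.

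There is essentially no obstacle here: the corollary is a purely bookkeeping concatenation of the two propositions, and the only minor point to check is that the intermediate state $\beta_1 f_{(m,n,\ell,-p-1)}$ produced by Proposition~\ref{prop:fourier to fourier 4d-1} sits in the hypothesis of Proposition~\ref{prop:fourier to fourier 4d-2} (which it does, by linearity, since that proposition is stated for the unit-amplitude initial datum $f_{(m,n,\ell,-p-1)}$ and multiplication by $\beta_1$ commutes with the linear dynamics).
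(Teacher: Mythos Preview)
Your proof is correct and follows exactly the approach the paper indicates: the paper states only that the corollary ``is direct from applying Proposition~\ref{prop:fourier to fourier 4d-1} and then Proposition~\ref{prop:fourier to fourier 4d-2},'' and your concatenation argument with the verification of the support condition via $|(x,y,z,p)|>|(m,n,\ell,p)|$ is precisely the intended bookkeeping.
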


The integrated form of the standard energy identity~\eqref{eq:energy-identity} will be used repeatedly below.

\begin{proposition}
\label{prop:intergrated-energy-identity}
    For $\theta_t$ a solution to~\eqref{eq:advection-diffusion}, we have that
 
 \begin{equation} \label{eq:energy-identity}\|\theta_t\|_{L^2} = \exp\Big({-}\int_0^t\frac{\|\nabla \theta_s\|_{L^2}^2}{\|\theta_s\|_{L^2}^2}\,ds \Big) \|\theta_0\|_{L^2}.
 \end{equation}
\end{proposition}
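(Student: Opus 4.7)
The plan is to derive the identity by differentiating $\|\theta_t\|_{L^2}^2$, using the PDE \eqref{eq:advection-diffusion} to evaluate the time derivative, and then integrating. First I would compute formally
\[\frac{d}{dt}\|\theta_t\|_{L^2}^2 = 2\int_{\T^d} \theta_t \,\dot\theta_t\,dx = 2\int_{\T^d} \theta_t\bigl(\Delta \theta_t + u_t\cdot\nabla\theta_t\bigr)\,dx.\]
The diffusion term is handled by integration by parts on $\T^d$ (no boundary contributions), giving $-2\|\nabla\theta_t\|_{L^2}^2$. The advection term vanishes: writing $\theta_t\,u_t\cdot\nabla\theta_t = \tfrac{1}{2} u_t\cdot\nabla(\theta_t^2)$ and integrating by parts produces $-\tfrac{1}{2}\int (\nabla\cdot u_t)\theta_t^2\,dx = 0$ thanks to the incompressibility assumption $\nabla\cdot u = 0$. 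Thus
\[\frac{d}{dt}\|\theta_t\|_{L^2}^2 = -2\|\nabla\theta_t\|_{L^2}^2.\]

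Next I would convert this into the differential equation for $\log\|\theta_t\|_{L^2}$. Provided $\|\theta_0\|_{L^2}\neq 0$, we know $\|\theta_t\|_{L^2}>0$ for all $t\geq 0$ (for instance because zero is the unique solution starting from zero, or by the parabolic maximum/comparison principle). Dividing the identity above by $2\|\theta_t\|_{L^2}^2$ gives
\[\frac{d}{dt}\log\|\theta_t\|_{L^2} = -\frac{\|\nabla\theta_t\|_{L^2}^2}{\|\theta_t\|_{L^2}^2}.\]
Integrating from $0$ to $t$ and exponentiating yields the stated identity.

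The only mildly delicate point is justifying the formal differentiation. In the regimes considered in Theorems~\ref{thm:2d}--\ref{thm:4d} the velocity fields lie at least in $L^\infty_{t,x}$ and the initial data is smooth, so standard parabolic regularity for \eqref{eq:advection-diffusion} gives $\theta \in C([0,\infty); L^2)\cap L^2_{\mathrm{loc}}([0,\infty); H^1)$ with $\dot\theta \in L^2_{\mathrm{loc}}([0,\infty); H^{-1})$; the differentiation $\frac{d}{dt}\|\theta_t\|_{L^2}^2 = 2\langle \dot\theta_t,\theta_t\rangle_{H^{-1},H^1}$ is then the classical Lions--Magenes identity, and the duality pairing produces exactly the integration-by-parts computation above. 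There is no real obstacle here; the identity is standard, and the only reason to record it as a proposition is that \eqref{eq:energy-identity} is invoked repeatedly in the sequel to convert control of $\|\nabla\theta_t\|_{L^2}/\|\theta_t\|_{L^2}$ into the superexponential decay rates of Theorems~\ref{thm:2d}--\ref{thm:4d}.
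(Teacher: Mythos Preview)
Your proof is correct and follows essentially the same approach as the paper: differentiate $\|\theta_t\|_{L^2}^2$, use the equation together with integration by parts and incompressibility to obtain $\frac{d}{dt}\|\theta_t\|_{L^2}^2 = -2\|\nabla\theta_t\|_{L^2}^2$, then integrate. The paper's proof is just the two-line version of this, omitting the regularity discussion you (reasonably) included.
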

\begin{proof}
    Computing directly, we see that
    \[\frac{d}{dt} \|\theta_t\|_{L^2}^2 = -2\frac{\|\nabla \theta_t\|_{L^2}^2}{\|\theta_t\|_{L^2}^2} \|\theta_t\|_{L^2}^2.\]
    Integrating in time and taking a square root, we conclude.
\end{proof}

With the above results---the bulk of which will be proved in Section~\ref{s:proof of props}---stated, we are ready to prove Theorems~\ref{thm:2d}--\ref{thm:4d}.

\begin{proof}[Proof of Theorem~\ref{thm:2d}]
    Letting $r_0$ as in Corollary~\ref{cor:fourier to fourier 2d}, we take the initial data $\theta_0 := f_{(r_0,0)} \in C^\infty(\T^2)$ and $\theta_0 \ne 0$. We then iteratively apply Corollary~\ref{cor:fourier to fourier 2d} to construct a velocity field $u$ with $\nabla \cdot u =0$, $\|u\|_{L^\infty_{t,x}} \leq C$ and such that for the solution $\theta_t$ to~\eqref{eq:advection-diffusion} with initial data $\theta_0$, we have that for some universal $T>0$, and times $t_n$ defined by
    \[t_0 =0 \quad \text{and} \quad t_{n+1} = t_n + \frac{T}{r_0+2n},\]
    we have that
        \[\theta_{t_n} = \beta_n f_{(r_0+2n,0)},\]
    and for $t \in [t_n, t_{n+1}]$, $\supp \hat \theta_t \subseteq \Z^2 - B_{r_0 + 2n}.$ Thus for $t \in [t_n,t_{n+1}]$, we have that
    \[\frac{\|\nabla \theta_t\|_{L^2}^2}{\|\theta_t\|_{L^2}^2} \geq C^{-1} n^2.\]
    We note that for $n \geq 2$,
    \[C^{-1} \log n \leq t_n \leq C \log n.\]
    Putting the displays together, we have that for $t \geq C$, 
    \[\frac{\|\nabla \theta_t\|_{L^2}^2}{\|\theta_t\|_{L^2}^2} \geq C^{-1} e^{C^{-1} t}.\]
    Then by Proposition~\ref{prop:intergrated-energy-identity}, we have that for $t \geq C$
    \[\|\theta_t\|_{L^2} \leq \exp\Big({-}C^{-1}\int_C^t e^{C^{-1} t}\,dt\Big)\|\theta_0\|_{L^2} \leq \exp\big({-C^{-1}} e^{C^{-1} t}\big)\|\theta_0\|_{L^2}.\]
    We then conclude~\eqref{eq:2d bound}, choosing the prefactor constant large enough to cover times $t \leq C.$
\end{proof}

\begin{proof}[Proof of Theorem~\ref{thm:3d}]
    Letting $r_0$ as in Proposition~\ref{prop:fourier to fourier 3d}, we take initial data $\theta_0 := f_{(r_0,0,0)} \in C^\infty(\T^3)$ with $\theta_0 \ne 0.$ We then iteratively apply Proposition~\ref{prop:fourier to fourier 3d} to generate the velocity field $u$ on the interval $[nT, (n+1)T]$, with $T>0$ as in Proposition~\ref{prop:fourier to fourier 3d}, such that $u : [0,\infty) \times \T^3 \to \R^3$ with $\nabla \cdot u =0$, $\|u\|_{L^\infty_t W^{1,\infty}_x} \leq C$, and if $\theta_t$ solves~\eqref{eq:advection-diffusion} with initial data $\theta_0$, then for $(x_n,y_n,z_n) \in \Z^3$ with $(x_0, y_0,z_0) = (r_0,0,0)$, we have that for some $\beta_n \in \C$
    \begin{align*}
        |(x_n,y_n,z_n)|^2 &\geq r_0^2 + n,\\
        \theta_{nT} &= \beta_n f_{(x_n,y_n,z_n)},\\
        \text{for all } t \in [nT, (n+1)T],\; \supp \hat \theta_t &\subseteq \Z^3 - B_{|(x_n,y_n,z_n)|}.
    \end{align*}
    The applying Proposition~\ref{prop:intergrated-energy-identity}, we have that for $n \geq 2$, $ t\in [nT, (n+1)T),$
    \begin{align*}\|\theta_t\|_{L^2} &=  \exp\Big({-}\int_0^t\frac{\|\nabla \theta_s\|_{L^2}^2}{\|\theta_s\|_{L^2}^2}\,ds \Big) \|\theta_0\|_{L^2}
    \\&\leq  \exp\Big({-}C^{-1} \sum_{j=0}^{n-1}( r_0^2 +j )\Big) \|\theta_0\|_{L^2}
    \leq \exp\big({-C^{-1}} n^2) \leq e^{-C^{-1} t^2}.
    \end{align*}
    We then conclude Theorem~\ref{thm:3d}, choosing the prefactor constant large enough to cover the times $t \leq 2T$.
\end{proof}

\begin{proof}[Proof of Theorem~\ref{thm:4d}]
    Letting $r_0$ as in Corollary~\ref{cor:fourier to fourier 4d}, we take initial data $\theta_0 = f_{(r_0,0,0,p)} \in C^\infty(\T^4)$ with $\theta_0 \ne 0$. We then iteratively apply Corollary~\ref{cor:fourier to fourier 4d} to give a sequence of times $0 = T_0 < T_1 <\cdots$ with the velocity field $u : [0,\infty) \times \T^4 \to \R^4$ defined on each $[T_n,T_{n+1}]$ such that $\|u\|_{L^\infty_t W^{s,\infty}_x} \leq C(s)$ and so that there are sequences $(x_n,y_n,z_n) \in \Z^3$ with $(x_0,y_0,z_0) = (r_0,0,0)$ and $\beta_n \in \C$ such that
    \begin{align*}
        |(x_n,y_n,z_n)|^2 &\geq r_0^2 + n,\\
        \theta_{T_n} &= \beta_n f_{(x_n,y_n,z_n,p)},\\
        \text{for all } t \in [T_n, T_{n+1}], \supp \hat \theta_t &\subseteq \Z^4 - B_{|(x_n,y_n,z_n,p)|}.
    \end{align*}
    Combining this with Proposition~\ref{prop:intergrated-energy-identity} allows us to immediately conclude.
\end{proof}

\section{Control of the ODE system}

\label{s:ODE}

We consider solutions to the ODE system $z^k :
[0,\infty) \to \C$,
\begin{equation}
    \label{eq:main-ode}
   \dot z^k_t =  - d_k z^k_t  + i\sum_j v^j_t  z^{k-j}_t,
\end{equation}
with diffusion coefficients $d_k \geq 0$ and coefficient field $v^k : [0,\infty) \to \C$ subject to the constraint $\overline{v^{-k}} = v^k.$ We use the following notation throughout
\begin{align}
    \label{eq:M defn}
    M &:= \min_{k \in \Z, k \ne 0,1} d_k,\\
    \label{eq:S defn}
    S &:= \sum_{k \in \Z} \frac{1}{1+d_k}.
\end{align}

\begin{assumption}
    \label{asmp:main-ode}
    Throughout, we make the following assumptions on the diffusion coefficients $d_k \geq 0$:
    \begin{enumerate}
        \item \label{item:d0 d1} $d_0,d_1 \in [0,1]$.
        \item \label{item:M large} $M \geq 2^{26}$.
        \item \label{item:S small} $S \leq 6$.
        \item \label{item:dk separated} For all $k \in \N - \{0\}$, $d_{k+1} - d_{1-k} \geq 1.$  
    \end{enumerate}
\end{assumption}

The main goal of this section is to prove the following theorem, which, after translation to a statement about the advection-diffusion equation, will be the central tool for Section~\ref{s:proof of props}. This result says, given appropriate assumptions on $d_k$, we can build a coefficient field $v^k_t$ so that $z^k_t$ flows from initial data $\delta_{k,0}$ to a final value of $\beta \delta_{k,1}$, in the appropriate time scale and with the appropriate regularity (that is decay as $|k| \to\infty)$ in $v^k_t$. The proof will be given at the end of the section, using the results developed below.

\begin{theorem}
\label{thm:ode-mass-moved}
    There exists $T >0$ such that for any diffusion coefficients $d_k \geq 0$ satisfying Assumption~\ref{asmp:main-ode} and such that $d_0 =0 , d_1 =1$, there exists a coefficient field $v^k_t$ with $v^{-k}_t = \bar v^k_t$ and $v^0_t =0$ such that if $z^k_t$ is the solution to~\eqref{eq:main-ode} with coefficients $d_k, v^k_t$ and initial data $z^k_0 = \delta_{k,0}$, then for some $\beta \in \C$,
    \begin{equation}
    \label{eq:mass moved theorem}
    z^k_T = \beta \delta_{k,1}.
    \end{equation}
    Further, $v^k_t$ can be taken such that for all $n \in \N$, there exists $C(n)>0$ so that for all $k \in \N - \{0\},$
    \begin{equation}
    \label{eq:v regularity theorem}
    \sup_{t \geq 0} |v^k_t| \leq C (1 + d_{1-k})^{-n} e^{-M} + C (\delta_{k,1} + \delta_{-k,1}).
    \end{equation}
\end{theorem}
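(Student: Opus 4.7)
The plan is to split $[0,T]$ into three successive windows and run a different mechanism on each, exactly mirroring the overview in Section~\ref{ss:overview}: a direct push that moves essentially all the mass from $z^0$ to $z^1$ using only $v^{\pm 1}$; a pure-diffusion interlude that exploits the gap $d_k\geq M\geq 2^{26}$ versus $d_0,d_1\leq 1$ to shrink the residual error by $e^{-M}$ relative to the surviving mass on $z^1$; and finally a Newton-type iterated linear control, using only $v^k$ with $|k|\geq 2$, that sweeps the small residual error off each mode $k\neq 1$ while delivering the sharp $k$-decay.

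On the first window $[0,T_1]$ I would take $v^j\equiv 0$ for $|j|\neq 1$ and analyze the closed $2\times 2$ subsystem for $(z^0,z^1)$, treating the leakage onto $|k|\geq 2$ as a controlled error. A sinusoidal or piecewise-constant $v^1_t$ of amplitude $O(1)$ on an interval of length $T_1=O(1)$ achieves $z^0_{T_1}=0$ while keeping $|z^1_{T_1}|\geq \tfrac{1}{2}$; the $O(1)$ amplitude is forced by the minimum-amplitude threshold for the two-state problem, which is governed by the gap $d_1-d_0\leq 1$ (Assumption~\ref{asmp:main-ode} item~\ref{item:d0 d1}), and accounts for exactly the $C(\delta_{k,1}+\delta_{-k,1})$ term in \eqref{eq:v regularity theorem}. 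On the second window $[T_1,T_2]$ I would set $v^j\equiv 0$ and let pure diffusion act: then $z^0_t\equiv 0$, $|z^1|$ decays by at most a bounded factor since $d_1\leq 1$, and $|z^k|$ for $k\neq 0,1$ is damped by $e^{-M(T_2-T_1)}$. Taking $T_2-T_1=1$ therefore produces
\[
\Big(\sum_{k\neq 1}|z^k_{T_2}|^2\Big)^{1/2} \big/ |z^1_{T_2}| \;\lesssim\; e^{-M},
\]
placing the system in the perturbative regime needed for the third window.

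On the third window $[T_2,T]$ I would run a Newton scheme on dyadic subintervals accumulating to $T$, at each stage using only $v^k$ with $|k|\geq 2$. Linearizing about the background $z^k=z^1\delta_{k,1}$ shows that $v^k$ acts, to leading order, on the pair $(z^{1-k},z^{1+k})$; the reality constraint $v^{-k}=\overline{v^k}$ ties the $\pm k$ controls together, so the naive advection-only linear subproblem is rank-deficient by one. The diffusion restores solvability: the differing decay factors $e^{-d_{1-k}s}$ and $e^{-d_{1+k}s}$ make the linear map from $v^k|_{[0,\tau]}$ to the pair $(z^{1-k}_\tau, z^{1+k}_\tau)$ invertible with operator-norm bound $O((1+d_{1-k})^{-n})$ for any $n$, owing to $d_{1+k}-d_{1-k}\geq 1$ (Assumption~\ref{asmp:main-ode} item~\ref{item:dk separated}). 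The quadratic contraction of Newton's step beats the dyadic shrinkage of intervals, so the scheme converges and achieves $z^k_T=0$ for $k\neq 1$; multiplying the invertibility bound by the starting error of size $e^{-M}$ yields exactly the decay in \eqref{eq:v regularity theorem}.

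Concatenating the three windows and setting $\beta:=z^1_T$ finishes \eqref{eq:mass moved theorem}. The main obstacle is the Newton scheme in the third window: one must verify both the $(1+d_{1-k})^{-n}$ stability of the mode-by-mode linear subproblem (a delicate use of the diffusion gap that is the core analytic content of this section) and that the iteration telescopes without losing constants in $n$ or $k$. The first two windows, by contrast, reduce to essentially finite-dimensional computations once $M\geq 2^{26}$ and $d_0,d_1\leq 1$ are exploited.
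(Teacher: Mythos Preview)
Your three-window architecture is exactly the paper's: Proposition~\ref{prop:move-all-mass-off-zero} for the push, a pure-diffusion interlude, then the iterated perturbative control of Corollary~\ref{cor:perturbative-control-iterated}. The first two windows are fine.

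The genuine gap is in the third window, where you restrict the Newton scheme to $v^k$ with $|k|\geq 2$. As you yourself note, the linearized control $v^k$ acts on the pair $(z^{1-k},z^{1+k})$; with $k\geq 2$ these pairs are $(z^{-1},z^3),(z^{-2},z^4),\ldots$, and the pair $(z^0,z^2)$ is never addressed. At time $T_2$ you have $z^0_{T_2}=0$ but $z^2_{T_2}$ is of order $e^{-M}$, not zero. Diffusion alone cannot drive $z^2$ to zero in finite time, and worse, $z^0$ (which has $d_0=0$, so no diffusive help at all) receives second-order forcing $\sum_{|j|\geq 2} v^j z^{-j}$ of size $\ep^2$ at each Newton step, so it drifts away from zero and is never corrected. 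The scheme therefore cannot achieve $z^k_T=0$ for $k=0,2$.

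The fix is simply to include $k=1$ in the Newton scheme, as the paper does in Proposition~\ref{prop:perturbative-control}. The solvability argument you sketch (via $d_{1+k}-d_{1-k}\geq 1$) works verbatim for $k=1$ since $d_2-d_0\geq M\geq 1$. You may have excluded $v^{\pm 1}$ out of concern for the regularity bound~\eqref{eq:v regularity theorem}, but this worry is unfounded: the $v^1$ produced by the Newton scheme has size $\lesssim e^{-M}$, which is absorbed into the $C(\delta_{k,1}+\delta_{-k,1})$ term already present from the first window.
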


A slight modification of the argument for Theorem~\ref{thm:ode-mass-moved} will give the following proposition, which will also be used in Section~\ref{s:proof of props} in order to prove Proposition~\ref{prop:fourier to fourier 4d-2} in which the Fourier mass is moved ``downhill''. The proof of this proposition will also be given at the end of the section.

\begin{proposition}
\label{prop:flow down hill}
    For all $\eta>0$ and for any diffusion coefficients $d_k \geq 0$ satisfying Assumption~\ref{asmp:main-ode} and such that $d_1=0, d_0 =1$, there exists $T>0$ and a coefficient field $v^k_t$ with $v^{-k}_t= \bar v^k_t$ and $v^0_t =0$ such that if $z^k_t$ is the solution to~\eqref{eq:main-ode} with coefficients $d_k,v^k_t$ and initial data $z^k_0 = \delta_{k,0}$, then for some $\beta \in \C$,
    \begin{equation}
    \label{eq:down hill mass moved}
    z^k_T = \beta \delta_{k,1}.
    \end{equation}
    Further, $v^k_t$ can be taken such that for all $n \in \N$, there exists $C(n)> 0$ such that for all $k \in \N -\{0\}$,
    \begin{equation}
    \label{eq:down hill regularity}
    \sup_{t \geq 0} |v^k_t|  \leq C \eta (1+d_{1-k})^{-n}.
    \end{equation}
\end{proposition}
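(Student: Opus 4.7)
The plan is to adapt the three-step strategy behind Theorem~\ref{thm:ode-mass-moved}---initial push, diffusive settling, perturbative cleanup---to the regime $d_0=1$, $d_1=0$. The essential simplification is that $z^1$ is now the \emph{most} diffusively stable mode, so instead of an energetic first step that must drive all mass off $z^0$, it suffices to deposit an arbitrarily small amount of mass on $z^1$ and then let diffusion deplete every other mode on its own. This is precisely why the bound~\eqref{eq:down hill regularity} can be made linear in the free small parameter $\eta$.

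\textbf{Step 1 (small push).} On a fixed interval $[0,t_1]$ with $t_1=O(1)$, set $v^k_t\equiv 0$ for $|k|\ne 1$ and $v^1_t\equiv c\eta$ (with $v^{-1}_t$ its conjugate) for a convenient constant $c$. A direct perturbative analysis of~\eqref{eq:main-ode}, using that $d_k\ge M$ for $k\notin\{0,1\}$ from Assumption~\ref{asmp:main-ode} to control the high modes, gives $z^0_{t_1}=e^{-t_1}+O(\eta^2)$, $z^1_{t_1}=ic\eta\int_0^{t_1}e^{-s}\,ds+O(\eta^2)=\Theta(\eta)$, and $|z^k_{t_1}|=O(\eta/(1+d_k))$ for $k\notin\{0,1\}$.

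\textbf{Step 2 (diffusive settling).} On $[t_1,t_2]$ with $t_2:=t_1+K\log(1/\eta)$ for $K$ large, take $v^k_t\equiv 0$. Since $d_1=0$ the value $z^1$ is preserved exactly; $z^0$ decays like $e^{-(t-t_1)}$; and each $z^k$ with $k\notin\{0,1\}$ decays at rate at least $e^{-M(t-t_1)}$. By choosing $K$ large, the relative error $\sum_{k\ne 1}|z^k_{t_2}|^2/|z^1_{t_2}|^2$ is driven below any prescribed threshold, so the state at $t_2$ is admissible input for the perturbative-control machinery.

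\textbf{Step 3 (perturbative cleanup) and main difficulty.} Apply Corollary~\ref{cor:perturbative-control-iterated} on a unit interval $[t_2,T]$ to send the residual error to zero and land at $z^k_T=\beta\delta_{k,1}$; the large-$|k|$ decay factor $(1+d_{1-k})^{-n}$ demanded by~\eqref{eq:down hill regularity} is already built into that corollary. The principal obstacle is checking the $\eta$-scaling of the \emph{absolute} size of the cleanup controls: because the carrier $z^1$ is $\Theta(\eta)$ rather than $\Theta(1)$, one rescales $z^k\mapsto z^k/\eta$---which leaves~\eqref{eq:main-ode} invariant---applies the standard $\Theta(1)$-carrier form of the perturbative control to the rescaled system, and then translates back, yielding the desired $|v^k_t|\le C(n)\eta(1+d_{1-k})^{-n}$. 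Apart from this bookkeeping and the elementary verification that Steps 1 and 2 supply errors small enough for the Newton iteration of Step 3 to converge, the argument is a direct reduction to the toolbox already developed for Theorem~\ref{thm:ode-mass-moved}.
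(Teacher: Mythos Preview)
Your three-step outline---small push, diffusive wait, then Corollary~\ref{cor:perturbative-control-iterated}---is exactly the paper's argument. The one point that needs correction is your Step~3 justification of the $\eta$-factor in~\eqref{eq:down hill regularity}. Rescaling $z^k\mapsto z^k/\eta$ leaves both the ODE \emph{and} (since the ratio $\ep$ is scale-invariant) the output of Corollary~\ref{cor:perturbative-control-iterated} unchanged: you recover the \emph{same} controls $v^k_t$ with the \emph{same} bound $|v^k_t|\le C(n)(1+d_{1-k})^{-n}\ep^{1/2}$, so the rescaling buys nothing and cannot be the source of the factor $\eta$.

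The $\eta$-factor instead comes directly from your Step~2. With $t_2-t_1=K\log(1/\eta)$ and $d_1=0$, $d_0=1$, $d_k\ge M$ otherwise, one has $|z^1_{t_2}|=\Theta(\eta)$ while $|z^0_{t_2}|=O(\eta^K)$ and the remaining modes are smaller still, so the ratio fed into Corollary~\ref{cor:perturbative-control-iterated} satisfies $\ep=O(\eta^{K-1})$. Taking $K\ge 3$ gives $\ep^{1/2}\le C\eta$, and then~\eqref{eq:v-regularity-iterated} yields~\eqref{eq:down hill regularity} immediately. This is precisely what the paper does: it chooses the waiting time $\sigma$ large enough that $\rho^{-1/2}e^{-\sigma/2}\le\eta$. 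With this replacement for the rescaling paragraph, your proof is complete and coincides with the paper's.
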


The following proposition codifies the first step described in Section~\ref{ss:overview}, in which all of the mass is moved off of $z^0_t.$ In order to be able to keep good track of the time scales, we want to also guarantee a controlled amount of mass is left on $z^1_t$. We note that we give a (semi-)explicit definition of the coefficient field $v_t^k$, however it is partially implicit as the coefficient field $v^k_t$ depends on values of $z^1_t, z^0_t.$ The well-definedness of the choice of coefficients $v^k_t$ in this implicit manner is straightforward when $z^0_t, z^1_t$ are away from $0$. We take the convention, which makes the well-definedness at all times also straightforward, that on $[\frac{1}{2^{10}}, \infty)$, $\frac{\bar z^0_t|z^1_t|}{\bar z^1_t|z^0_t|} := 0$---and hence $v^k_t =0$ for all $k$---for all times following the first time in $[\frac{1}{2^{10}}, \infty)$ that $z^0_t =0$ or $z^1_t=0$.

The explicit constants (e.g.\ $2^8,2^{10}$) are used in place of implicit constants so as to avoid subtle interdependency between the different implicit constants.

\begin{proposition}
    \label{prop:move-all-mass-off-zero}
    Let $d_k \geq 0$ be diffusion coefficients satisfying Assumption~\ref{asmp:main-ode} and such that $d_0=0, d_1=1$. We let $z^k_t$ be the solution to~\eqref{eq:main-ode} with initial data $z^k_0 = \delta_{k,0}$ and coefficient field $v^k_t$ defined by 
    \begin{align*}
        v^k_t &:= a_t\delta_{k,1} + \overline{a_t} \delta_{k,-1}\\
        a_t &= \begin{cases} 2^8 & t \in [0,\frac{1}{2^{10}})\\ - i 2^8  \frac{\bar z^0_t|z^1_t|}{\bar z^1_t|z^0_t|} & t \in [\frac{1}{2^{10}},\infty),\end{cases}
    \end{align*}
    where we take the convention that $\frac{\bar z^0_t|z^1_t|}{\bar z^1_t|z^0_t|} := 0$ in the case that $z^0_t =0$ or $z^1_t =0$.
    Then we have that
    \[z^0_{1} =0 \quad \text{and} \quad |z^1_1| \geq \frac{1}{96}.\]
\end{proposition}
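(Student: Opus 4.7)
The plan exploits a separation of scales: the modes $z^k$ with $k \notin \{0,1\}$ are damped at rate $d_k \geq M \geq 2^{26}$ while $|a_t| \leq 2^8$ throughout, so these ``error modes'' remain negligibly small and the dynamics effectively reduce to an explicitly solvable 2D ODE system in $(z^0, z^1)$. Concretely, applying Duhamel to the equation for $z^k$ ($k \notin \{0,1\}$) and iterating outward from $\{0,1\}$ gives
\[\sup_{t \in [0,1]} |z^k(t)| \leq (2^9/M)^{\min(|k|,\,|k-1|)},\]
so $|z^{-1}|, |z^2| \leq 2^{-17}$ with geometric decay further out, and the contribution of error modes to $\dot z^0, \dot z^1$ is bounded by $2^{-9}$.

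\textbf{Phase 1 ($t \in [0, t_0]$, $t_0 := 2^{-10}$).} With $a_t = 2^8$ constant real and $d_0 = 0, d_1 = 1$, the main system reduces (up to the $2^{-9}$ error) to $\ddot z + \dot z + 2^{16} z = 0$ for each of $z^0, z^1$, so the initial data $(z^0, z^1)(0) = (1, 0)$ yields the closed-form solutions
\[z^0(t) \approx e^{-t/2}\Big(\cos(\mu t) + \frac{\sin(\mu t)}{2\mu}\Big), \qquad z^1(t) \approx \frac{i \cdot 2^8}{\mu} e^{-t/2}\sin(\mu t), \qquad \mu := \sqrt{2^{16} - \tfrac{1}{4}}.\]
At $t_0$, $\mu t_0 \approx 1/4$, so $z^0(t_0) \approx \cos(1/4)$ lies on the positive real axis and $z^1(t_0) \approx i \sin(1/4)$ on the positive imaginary axis.

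\textbf{Phase 2.} The crucial observation is that whenever $z^0$ is positive real and $z^1$ positive imaginary, the feedback formula evaluates to $a_t = 2^8$, exactly the Phase 1 value; and with this $a_t$ one checks directly that $\dot z^0 + O(2^{-9})$ is a negative real number and $\dot z^1 + O(2^{-9})$ is purely imaginary, so this phase configuration is self-preserved. Consequently Phase 2 is the smooth continuation of the Phase 1 formulas above, all the way up to the first zero $\tau$ of $z^0$, which solves $\tan(\mu \tau) = -2\mu$, giving $\mu \tau \approx \pi/2 + 1/(2\mu)$, hence $\tau \approx \pi/(2\mu) \approx 6 \cdot 10^{-3} < 1$, and $|z^1(\tau)| \approx (2^8/\mu) e^{-\tau/2} \approx 1$. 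For $t \geq \tau$ the convention forces $a_t \equiv 0$, so $\dot z^0 \equiv 0$ (giving $z^0_1 = 0$) and $\dot z^1 = -z^1$, yielding $|z^1_1| \approx e^{-(1-\tau)} \approx e^{-1} \approx 0.37$, comfortably exceeding $1/96$.

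\textbf{Main obstacle.} The most delicate point is justifying the self-preservation of the phase configuration in Phase 2, since the error terms $i a_t z^{-1}$ and $i\bar a_t z^2$ can in principle drift the phases, and any such drift perturbs $a_t$ through the nonlinear feedback, creating a closed loop. The key stabilizing fact is structural: a direct computation gives $i\bar a_t z^1 = -2^8 (|z^1|/|z^0|) z^0$ regardless of the phases of $z^0, z^1$, so $\dot z^0$ is automatically pulled along the direction of $z^0$ itself (plus a small perpendicular error), and the magnitude equation $\dot{|z^0|} = -2^8 |z^1| + O(2^{-9})$ is robust to phase drift. A Gronwall estimate based on the $O(2^{-9})$ error bound then controls the accumulated phase drift over $[0, \tau]$ by a negligible quantity, and the quantitative estimate $|z^1_1| \geq 1/96$ follows with enormous slack.
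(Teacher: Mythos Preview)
Your argument is essentially correct and ultimately rests on the same structural identity the paper uses, namely that the feedback choice forces $i\bar a_t z^1_t = -2^8 |z^1_t|\,\frac{z^0_t}{|z^0_t|}$ and $i a_t z^0_t = 2^8 |z^0_t|\,\frac{z^1_t}{|z^1_t|}$, so the principal parts of $\dot z^0$ and $\dot z^1$ point exactly along $-z^0$ and $+z^1$ respectively, regardless of phase. You arrive at this identity only at the end, as the resolution of your ``main obstacle''; in the paper it is the starting point of the Phase~2 analysis, and phases are never tracked at all. The paper's Phase~1 is also cruder: rather than the explicit oscillator solution, it just uses Duhamel to bound $|z^0_t-1|\le 2^{17}t^2$ and $|z^k_t|\le 2^9 t$ for $k\ne 0$, from which $|z^1_{2^{-10}}|\ge\tfrac{1}{16}$ follows. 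For Phase~2 the structural identities immediately give $\frac{d}{dt}|z^0_t|\le -2^8|z^1_t|+2^{-9}$ and $\frac{d}{dt}|z^1_t|\ge -|z^1_t|-2^{-9}$; Gr\"onwall on the second yields $|z^1_t|\ge\tfrac{1}{96}$ on $[2^{-10},1]$, and then integrating the first forces $|z^0|$ to hit zero before time~$1$. Your explicit-formula route can be made rigorous, but the bootstrap controlling phase drift near $\tau$ (where $|z^0|\to 0$ and relative phase errors blow up) would in the end just reduce to these magnitude inequalities anyway, so the detour buys nothing. One small omission: you state the identity only for $\dot z^0$; the companion identity for $\dot z^1$ is what gives the lower bound on $|z^1_t|$ needed both to drive $|z^0|$ to zero and to obtain $|z^1_1|\ge\tfrac{1}{96}$.
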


\begin{proof}

   We note that
    \[\dot z^k_t =  - d_k z^k_t + i \big( a_t z^{k-1}_t + \overline{a_t} z^{k+1}_t\big).\]
    We first note that since $d_k \geq 0,$ we have that
    \[\frac{d}{dt} \sum_k |z^k_t|^2 = - 2 \sum_k d_k |z^k_t|^2 \leq 0,\]
    so that for all $t \geq 0,$
    \[ \sum_k |z^k_t|^2\leq \sum_k |z^k_0|^2 = 1.\]
    
    By Duhamel's principle, for any $t \geq 0,$
    \begin{equation}
    \label{eq:zk duhamel}
    z^k_t = e^{-d_kt}z^k_0 + i \int_0^t e^{-d_k(t-r)} \big( a_r z^{k-1}_r + \overline{a}_r z^{k+1}_r\big)\,dr.
    \end{equation}
    Thus have the estimates for $k \ne 0$, bounding $|a_t| \leq 2^8,$
    \begin{equation}
    \label{eq:zk estimate}
    |z^k_t| \leq  2^9\int_0^t e^{-d_k(t-r)}\,dr =\frac{2^9}{d_k} (1 - e^{-d_kt}) \leq \frac{2^9}{d_k} \land 2^9 t. 
    \end{equation}

    Using the equation for $z^0$, that $d_0 =0$, and that $z^0_0=1,$ this then implies
    \begin{equation}
    \label{eq:z0 close to 1}
    |z^0_t -1| =  \Big|\int_0^t \big(a_r z^{-1}_r + \bar a_r z^{1}_r\big)\,dr\Big| \leq 2^{18} \int_0^t r\,dr \leq 2^{17} t^2.
    \end{equation}
    Then using~\eqref{eq:zk duhamel} for $k=1$, that $d_1 = 1$, $z^1_0=0,$ and the explicit form $a_t$ for $t \in [0,2^{-10}),$ we have that
    \[z^1_{2^{-10}} - i 2^8 \int_0^{2^{-10}} e^{-(2^{-10}-r)} \,dr = i 2^8 \int_0^{2^{-10}} e^{-(2^{-10}-r)}  \big((z_r^0-1)+ z_r^2\big)\,dr,\]
    thus using~\eqref{eq:zk estimate} for $z^2$ and~\eqref{eq:z0 close to 1} to $(z^0_r -1)$, we see that
    \begin{align*}\Big|z^1_{2^{-10}} -  i 2^8\int_0^{2^{-10}} e^{-(2^{-10}-r)} \,dr\Big|  &\leq    2^8\int_0^{2^{-10}} e^{-(2^{-10}-r)}  \big(|z^0_r-1| +|z^2_r|\big)\,dr 
    \\&\leq 2^{17} \int_0^{2^{-10}} 2^8 r^2  + r\,dr
    \\&\leq \frac{1}{8}.
    \end{align*}
    Then computing directly, we see that
    \[2^8\int_0^{2^{-10}} e^{-(2^{-10}-r)} \,dr \geq \frac{1}{4} e^{-2^{-10}} \geq \frac{3}{16}.\]
    Thus
    \[|z^1_{2^{-10}}| \geq \frac{3}{16} - \frac{1}{8} \geq \frac{1}{16}.\]
    
     Then using the equations for $z^0, z^1$ and the definition of $a$, we have for $t \geq 2^{-10}$,
    \[\begin{cases} \dot z^0_t =-  2^8 \frac{z^0_t}{|z^0_t|} |z^1_t| +  i   a_t z^{-1}_t ,\\
\dot z^1_t = -  z^1_t+    2^8 \frac{ z^1_t}{|z^1_t|} |z^0_t| + i  \bar a_t z^2_t,
\end{cases}\]
    where we take $z^j_t/|z^j_t| =0$ in the case that $z^j_t =0$. Thus, using the global-in-time bounds of~\eqref{eq:zk estimate} on $z^{-1}, z^2,$ we have the differential inequalities on $[2^{-10},\infty),$
    \[\begin{cases}\frac{d}{dt} |z^0_t| \leq -   2^{8} |z^1_t| + \frac{2^{17} }{d_{-1}} \leq  -   2^8 |z^1_t| + 2^{-9},\\
\frac{d}{dt} |z^1_t| \geq - |z^1_t| - \frac{2^{17}}{d_2} \geq - |z^1_t| - 2^{-9},
\end{cases}\]
where we use Item~\ref{item:M large} of Assumption~\ref{asmp:main-ode}.
Then Gr\"onwall's inequality implies that for $0\leq s \leq 1 - 2^{-10}$,
    \begin{align*}|z^1_{2^{-10}}| &\leq e^{s} |z^1_{s+2^{-10}}| + 2^{-9} \int_{2^{-10}}^{s+2^{-10}} e^{(s+ 2^{-10}-r)}\,dr
    \\&\leq   e^{s} |z^1_{s+2^{-10}}| + 2^{-9} e^{s}
    \\&\leq 3 (|z^1_{s+2^{-10}}| + 2^{-9}),
    \end{align*}
    where we use that $s \leq 1$. Thus rearranging and using the lower bound on $z^1_{2^{-10}}$, we get that for $t \in [2^{-10}, 1],$
    \[|z_t^1| \geq \frac{1}{48} - 2^{-9} \geq \frac{1}{96}.\]
    We note this in particular implies the lower bound of $z^1_{1}$ in the proposition statement.
    
    Let $\tau \in (2^{-10},\infty]$ be the first time that $z^0_t = 0$. To conclude, it suffices to prove that $\tau \leq 1$, as once $z^0_t = 0$, the velocity field is defined to be $0$ from that point on, and so $z^0_t$ will remain zero from then on. We note that by the differential inequality for $|z^1_t|$, we have that 
    \begin{align*}1 &\geq -\int_{2^{-10}}^{1 \land \tau} \frac{d}{dr} |z^0_r|\,dr 
    \\&\geq \int_{2^{-10}}^{1 \land \tau}  2^8  |z^1_r| - 2^{-9}\,dr 
    \\&\geq 2^8 \big(1\land \tau - 2^{-10}\big) \Big( \frac{1}{96} - 2^{-17}\Big) 
    \\&\geq \frac{4}{3}  \big(1 \land \tau - 2^{-10}\big),
    \end{align*}
    Then we see though that we must have $\tau < 1$---allowing us to conclude---as otherwise the above inequality would be a contradiction. 
\end{proof}

The following proposition is the first part of the second step of the argument sketched in Section~\ref{ss:overview}; it is essentially one step of the Newton's method iteration. We assume that almost all of mass is on $z^1_0$ with a perturbative amount of mass on $z^k_t$ for $k \ne 1$. The goal is to construct a coefficient field $v^k_t$ that eliminates the error to leading order, leaving a lower order error behind. We perform this leading order correction on time intervals of arbitrarily small size, though pick up larger errors---as well as require smaller initial error---for smaller time intervals. Working on arbitrarily small time intervals is necessary to perform infinite iterations in a finite time horizon; we will end up choosing dyadic intervals for the iterations. The leading order error is corrected by solving a linear control problem. The linear control problem is strongly underdetermined---an infinite dimensional space of controls for a two dimensional configuration space. Instead of using a least squares method, we rather introduce an explicit basis for a two-dimensional subspace on which the problem is well-posed.

\begin{proposition}
    \label{prop:perturbative-control}
    Let $d_k \geq 0$ be diffusion coefficients satisfying Assumption~\ref{asmp:main-ode}. Let $\ep \in (0,1/2)$ and let $z^k_0$ be such that
    \[\sqrt{\frac{\sum_{k \ne 1} |z^k_0|^2}{\sum_k |z^k_0|^2}} \leq \ep.\]
    Then there exists $C>0$ such that for any $T \in (0,1],$ such that if
    \[\ep \leq C^{-1} T^{3/2},\]
    then defining for each $k \in \N - \{0\},$
            \begin{align*} B^k &= \frac{-i}{ e^{(d_{1-k} - d_1)T/2} - e^{(d_{k+1} - d_1)T/2}} \begin{pmatrix}
         e^{(d_{1-k} - d_1)T/2}   & -e^{(d_{k+1} - d_1)T/2}
        \\
        -1   & 1 
    \end{pmatrix},
    \\     \begin{pmatrix}a^k \\ b^k \end{pmatrix} &= B^k \begin{pmatrix}   -\Big(\int_0^{T/2} e^{(d_{k+1} -d_1)s}\,ds\Big)^{-1}z_0^{k+1} \\\Big(\int_0^{T/2} e^{(d_{1-k} -d_1)s}\,ds\Big)^{-1}\overline{z_0^{1-k}}\end{pmatrix},
    \\    v^k_t &= a^k \indc_{[0,T/2]}(t) + b^k \indc_{(T/2,T]}(t),\quad v^{-k} = \bar v^k,\quad \text{and}\quad v^0=0,
    \end{align*}
    then for the solution $z^k_t$ to~\eqref{eq:main-ode} with coefficients $d_k, v^k_t$, and initial data $z^k_0$, we have the bound 
    \begin{equation}
    \label{eq:main-perturbative-bound}
    \sqrt{\frac{\sum_{k \ne 1} |z^k_T|^2}{\sum_k |z^k_T|^2}} \leq C T^{-3} \ep^2.
    \end{equation}
    Additionally, for $k \in \N - \{0\}$ and $n \in \N$, we have that
    \begin{equation}
    \label{eq:v-regularity}
    \sup_{t \in [0,T]} |v^k_t| \leq  C e^{-d_{1-k}T/4}  T^{-2} (|z^{k+1}_0| + |z^{1-k}_0|) \leq C(n)  (1+d_{1-k})^{-n}T^{-2-n} \ep.
    \end{equation}
\end{proposition}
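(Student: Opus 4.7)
The overall approach is to view this proposition as the linear step of a Newton iteration: I choose $v^k$ to cancel the Duhamel dynamics linearized around the trivial advection--diffusion evolution $z^1_t \approx z^1_0 e^{-d_1 t}$, $z^k_t \approx 0$ for $k \ne 1$, and show that the remaining error is quadratic in $\ep$. After normalizing $\|z_0\|_{\ell^2} = 1$ (so $|z^1_0| \geq \sqrt{1-\ep^2}$ after absorbing a phase), the plan proceeds in three steps; the main technical point is extracting the sharp exponential decay $e^{-d_{1-k}T/4}$ in the bound on $v^k$, which is essential for the $\ell^2$ summability used in the error estimate.

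\textbf{Deriving the formula for $(a^k,b^k)$.} The hermitian constraint $v^{-k} = \bar v^k$ means a single control $v^k$ ($k \geq 1$) simultaneously drives modes $k+1$ (via $v^k$) and $1-k$ (via $\bar v^k$) in leading-order Duhamel. Taking $v^k_s = a^k \indc_{[0,T/2]}(s) + b^k \indc_{(T/2,T]}(s)$ and imposing that both $z^{k+1}_T$ and $z^{1-k}_T$ vanish at linear order (with $z^1_s$ replaced by $z^1_0 e^{-d_1 s}$) reduces, after dividing through by $e^{-d_{k+1}T}$ or $e^{-d_{1-k}T}$ and the half-interval integrals $I^\pm_k$, to the $2\times 2$ system
\[
\begin{pmatrix} 1 & e^{(d_{k+1}-d_1)T/2} \\ 1 & e^{(d_{1-k}-d_1)T/2} \end{pmatrix}\begin{pmatrix} ia^k \\ ib^k \end{pmatrix} = \begin{pmatrix} -z^{k+1}_0/I^+_k \\ \bar z^{1-k}_0/I^-_k \end{pmatrix}.
\]
Assumption~\ref{asmp:main-ode}\eqref{item:dk separated} ensures the determinant $e^{(d_{1-k}-d_1)T/2} - e^{(d_{k+1}-d_1)T/2}$ is nonzero, with $|\det| \geq (T/4) e^{(d_{k+1}-d_1)T/2}$ for $T \leq 1$. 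Inverting gives exactly the stated formula for $B^k$.

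\textbf{Regularity bound~\eqref{eq:v-regularity}.} The naive operator norm of $B^k$ yields only $\ep/T^2$ without any exponential decay, so the key is to extract the $e^{-d_{1-k}T/4}$ factor. I would use the Laplace-type inequality $x/(e^x-1) \leq 4 e^{-x/2}$ to sharpen each ingredient: $1/I^+_k \leq (C/T) e^{-(d_{k+1}-d_1)T/4}$, $1/I^-_k \leq (C/T) e^{-(d_{1-k}-d_1)T/4}$, and $1/|\det| \leq (C/T) e^{-(d_{k+1}-d_1)T/2}$. Multiplying through and tracking exponents in each of the four contributions to $a^k$ and $b^k$ (using $d_{k+1} \geq d_{1-k}+1$ and $d_1 \in [0,1]$), every term reduces to $(C/T^2) e^{-d_{1-k}T/4}(|z^{k+1}_0|+|z^{1-k}_0|)$, giving the first form of~\eqref{eq:v-regularity}; the $n$-dependent form follows from $e^{-dT/4} \leq C(n)(dT)^{-n}$ and $|z^{k+1}_0|+|z^{1-k}_0| \leq 2\ep$.

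\textbf{Nonlinear error~\eqref{eq:main-perturbative-bound}.} Decompose $z^k_T = L^k + E^k$ for $k \ne 1$, where $L^k = (1-z^1_0) e^{-d_k T} z^k_0$ is the Newton residual (the formula is calibrated to $z^1_0 = 1$, leaving a factor $|1-z^1_0| \leq \ep^2$) and $E^k$ captures (a) the departure of $z^1_s$ from $z^1_0 e^{-d_1 s}$ and (b) nonlinear cross-terms $v^j z^{k-j}$ with $j \notin \{0, k-1\}$. Three auxiliary bounds drive the estimate: $V_2 := (\sum_j |v^j|^2)^{1/2} \leq C\ep/T^2$ (from Step 2 plus Assumption~\ref{asmp:main-ode}\eqref{item:S small}); $\sup_{s\in[0,T]} \|z_s\|_* \leq \ep + sV_2 \leq C\ep/T$ (from $\frac{d}{ds}\|z\|_*^2 \leq 2V_2\|z\|_*$, a consequence of $v^0=0$, the hermitian constraint, and $|z^1|\leq 1$); and $\sup_s |z^1_s - z^1_0 e^{-d_1 s}| \leq sV_2 \sup\|z\|_* \leq C\ep^2/T^2$. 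Contribution (a) to $\sqrt{\sum_{k\ne 1}|E^k|^2}$ is $\lesssim \ep^3/T^3$; contribution (b) is bounded by Cauchy--Schwarz together with $\sum_{k\ne 1}\min(T,1/d_k)^2 \leq T^2 + 2TS \leq 13T$ (using Assumption~\ref{asmp:main-ode}\eqref{item:d0 d1} and \eqref{item:S small}), giving $\lesssim \ep^2/T^{5/2}$. Finally $|z^1_T| \geq z^1_0 e^{-d_1 T} - C\ep^2/T^2 \geq c > 0$ under the smallness assumption $\ep \leq C^{-1}T^{3/2}$, so $\|z_T\| \geq c$ and the ratio bound~\eqref{eq:main-perturbative-bound} follows.
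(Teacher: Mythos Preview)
Your argument follows the paper's in all the essential steps: verifying that the prescribed control produces the exact first-order cancellation, extracting the sharp factor $e^{-d_{1-k}T/4}$ in the bound on $|v^k_t|$, and then showing the remaining error is $O(\ep^2)$. The only substantive difference is how that remainder is handled. The paper writes $z^k_t = e^{-d_kt}z^k_0 + \gamma^k_t + \psi^k_t$ with $\gamma$ the explicit linearized correction (so that $z^k_T=\psi^k_T$ for $k\ne 1$) and runs an $\ell^2$ energy estimate directly on $\psi$, pairing the forcing against $\|\psi\|_{\ell^1}$ and using the weighted Cauchy--Schwarz $\|\psi\|_{\ell^1}\le S^{1/2}\|(1+d_k)^{1/2}\psi\|_{\ell^2}$ so that the dissipation absorbs the Young-convolution terms. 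You instead stay in Duhamel form on the full solution: first prove the a priori bound $\|z_s\|_*\le C\ep/T$ (playing the role of the paper's $\|\gamma_t\|_{\ell^2}\le CT^{-1}\ep$), and then gain an extra $\sqrt{T}$ from $\sum_k\min(T,1/d_k)^2\lesssim T$. Both routes close at $CT^{-3}\ep^2$, and your Duhamel version is arguably more transparent about where each power of $T$ comes from.

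One loose end worth tightening: the reduction ``after absorbing a phase'' to force $z^1_0\in(0,1]$, and hence $|1-z^1_0|\le\ep^2$, is not obviously legitimate. The formula for $(a^k,b^k)$ couples $z^{k+1}_0$ with $\overline{z^{1-k}_0}$; under $z_0\mapsto e^{i\theta}z_0$ these pick up \emph{opposite} phases $e^{\pm i\theta}$, so the prescribed $v^k$ is genuinely changed and no symmetry of the system lets you normalize $\arg z^1_0$ for free. The paper avoids your explicit residual $L^k=(1-z^1_0)e^{-d_kT}z^k_0$ by linearizing $\gamma$ around $e^{-d_1 t}$ rather than $z^1_0 e^{-d_1 t}$, so that $\gamma^k_T+e^{-d_kT}z^k_0=0$ exactly and there is no $L^k$; the $(z^1_0-1)$ contribution then appears instead as an extra source $iv^{k-1}_t e^{-d_1 t}(z^1_0-1)$ in the $\psi$-equation, which the paper's displayed equation also does not explicitly record. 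So this is a shared subtlety rather than a defect particular to your route.
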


\begin{proof}
    Without loss of generality, by linearity, we can assume that $\sum_k |z^k_0|^2 = 1$ and $\sum_{k \ne 1} |z^k_0|^2 = \ep^2$.

    We decompose
    \[z^k_t = e^{-d_kt} z^k_0+ \gamma^k_t + \psi^k_t,\]
    where $\gamma^k_t$ is defined by
    \[\begin{cases}\dot \gamma^k_t = - d_k \gamma^k_t + i e^{-d_1 t} v^{k-1}_t\\
    \gamma^k_0 = 0.
    \end{cases}\]
    Thus the remainder term $\psi^k_t$ solves
    \[\begin{cases}\dot \psi^k_t = - d_k \psi^k_t + i \sum_{j \ne1} e^{-d_j t}v_t^{k-j}z_0^j + i \sum_{j} v^{k-j}_t \gamma^j_t + i \sum_j v^{k-j}_t \psi^j_t \\ \psi^k_0 =0.\end{cases}\]
    By Duhamel's principle, we have that
    \begin{equation}
    \label{eq:gamma-duhamel}
    \gamma^k_t = i \int_0^t e^{-d_k(t-s)} e^{-d_1 s} v^{k-1}_s\,ds.
    \end{equation}
    We claim that for $k \in \Z, k\ne 1$.
    \begin{equation}
    \label{eq:gamma claim}
    \gamma^k_T = -e^{-d_kT} z_0^k,
    \end{equation}
    that is our choice of $v^k_t$ is such that $\gamma^k_T$ exactly cancels the leading order error. Fix $k \in \N - \{0\}$, and then using the definition of $v^k_t$, we have that
    \begin{align*}\gamma^{k+1}_T &= i\int_0^{T/2} e^{- d_{k+1} (T-s)} e^{-d_1 s} a^k\,ds + i\int_{T/2}^{T} e^{-d_{k+1} (T-s)} e^{-d_1 s} b^k\,ds
    \\&= i e^{-d_{k+1} T} \Big(\int_0^{T/2} e^{(d_{k+1} - d_1)s}\,ds\Big) \big(a^k + e^{(d_{k+1} - d_1)T/2} b^k\big).
    \end{align*}
    Similarly,
    \[\gamma^{1-k}_T = i e^{-d_{1-k} T} \Big(\int_0^{T/2} e^{(d_{1-k} - d_1)s}\,ds\Big) \big(\bar a^k + e^{(d_{1-k} - d_1)T/2} \bar b^k\big).\]
    Rearranging into a matrix equation, we see
        \[-i\begin{pmatrix}
        1    & e^{(d_{k+1} - d_1)T/2}
        \\
        1   & e^{(d_{1-k} - d_1)T/2}
    \end{pmatrix} \begin{pmatrix}
        a^k \\ b^k
    \end{pmatrix}= \begin{pmatrix} -\Big(\int_0^{T/2} e^{(d_{k+1} -d_1)s}\,ds\Big)^{-1}e^{d_{k+1}T}\gamma^{k+1}_T \\ \Big(\int_0^{T/2} e^{(d_{1-k} -d_1)s}\,ds\Big)^{-1} e^{d_{1-k}T}  \overline{\gamma^{1-k}_T} \end{pmatrix}.\]
    We note then the matrix multiplying $\begin{pmatrix} a^k \\ b^k\end{pmatrix}$ is exactly $-(B^k)^{-1}$. Thus inserting the definition of $\begin{pmatrix} a^k \\ b^k\end{pmatrix}$ and cancelling $(B^k)^{-1} B^k =I$, we see 
    \[\begin{pmatrix}   \Big(\int_0^{T/2} e^{(d_{k+1} -d_1)s}\,ds\Big)^{-1}z_0^{k+1} \\-\Big(\int_0^{T/2} e^{(d_{1-k} -d_1)s}\,ds\Big)^{-1}\overline{z_0^{1-k}}\end{pmatrix} = \begin{pmatrix} -\Big(\int_0^{T/2} e^{(d_{k+1} -d_1)s}\,ds\Big)^{-1}e^{d_{k+1}T}\gamma^{k+1}_T \\ \Big(\int_0^{T/2} e^{(d_{1-k} -d_1)s}\,ds\Big)^{-1} e^{d_{1-k}T}  \overline{\gamma^{1-k}_T} \end{pmatrix}.\]
    Rearranging, we see that as claimed 
    \[\gamma_T^{k+1} = -e^{-d_{k+1} T} z_0^{k+1} \quad \text{and} \quad \gamma_T^{1-k} = -e^{-d_{1-k}T} z_0^{1-k}.\]
    Since $k \in \N -\{0\}$ is arbitrary, we see we get the claim~\eqref{eq:gamma claim}. Note also that since $v^0_t =0$, $\gamma^1_t =0.$ Thus
    \begin{equation}
    \label{eq:gamma-gone}
    z_T^k = e^{-d_1 T} z_0^1 + \psi^k_T.
    \end{equation}
    
    Our goal now is to estimate $\|\psi^k_T\|_{\ell^2}.$ The first step is to get control on $a^k, b^k$. To that end, we note that
    \begin{align*}
        \frac{|a^k| + |b^k|}{|z^{k+1}_0| + |z^{1-k}_0|} &\leq C \frac{e^{(d_{k+1} -d_1)T/2}  \Big(\int_0^{T/2} e^{(d_{1-k} -d_1)s}\,ds\Big)^{-1}}{  e^{(d_{k+1} - d_1)T/2}-e^{(d_{1-k} - d_1)T/2}}
        \\&\leq C e^{-(d_{1-k} -d_1)T/4}  T^{-1} \big(1 - e^{-(d_{k+1} - d_{1-k})T/2}\big)^{-1}. 
    \end{align*}
    Then 
      \[1 - e^{-(d_{k+1} - d_{1-k})T/2} \geq (d_{k+1} - d_{1-k})T/4 \land \frac{1}{2},\]
    so putting the displays together, we have that 
    \begin{align*}
    |a^k| + |b^k| &\leq C e^{-(d_{1-k} -d_1)T/4}  T^{-1} ((d_{k+1} - d_{1-k})^{-1} T^{-1} + 1)(|z^{k+1}_0| + |z^{1-k}_0|)
    \\&\leq C e^{-d_{1-k}T/4}  T^{-2} (|z^{k+1}_0| + |z^{1-k}_0|),
    \end{align*}
    where we use that $d_{k+1} - d_{1-k} \geq 1$ by Item~\ref{item:dk separated} of Assumption~\ref{asmp:main-ode}, as well as that $T,d_1 \leq 1$. We note that this directly implies~\eqref{eq:v-regularity}. Then we see by the definition of $v^k_t$ that for all $t \in [0,T]$,
    \begin{equation}
    \label{eq:vt-ell-2-bound}
    \|v_t\|_{\ell^2} \leq CT^{-2} \Big(\sum_{k \ne 1} |z_0^k|^2\Big)^{1/2} \leq CT^{-2} \ep.\end{equation}
    Then we see that by~\eqref{eq:gamma-duhamel}, for $t \in [0,T]$
    \begin{equation}
    \label{eq:gammat-ell-2-bound}
    \|\gamma_t\|_{\ell^2} \leq T \sup_{s \in [0,T]}\|v_s\|_{\ell^2} \leq C T^{-1} \ep.
    \end{equation}
    Now, using the equation for $\psi_t$, using that $\overline{v_k} = v_{-k}$ to show the final term is $0$, we have that
    \begin{align*}\frac{d}{dt} \frac{1}{2}  \|\psi_t\|_{\ell^2}^2 &\leq - \sum_k d_k |\psi^k_t|^2 + \sum_{j,k} v^{k-j}_t (\indc_{j \ne 0} z^j_0) \psi^k_t + \sum_{k,j} |v_t^{k-j} |\gamma^j_t| |\psi^k_t|
    \\&\leq - \sum_k d_k |\psi^k_t|^2 + \|v_t\|_{\ell^2} \|\indc_{j \ne 0} z_0\|_{\ell^2} \|\psi_t\|_{\ell^1} + \|v_t\|_{\ell^2} \|\gamma_t\|_{\ell^2} \|\psi_t\|_{\ell^1}, 
    \end{align*}
    where we use Young's convolution inequality. Then we note that
    \[\|\psi_t\|_{\ell^1} \leq \|(1+d_k)^{1/2} \psi_t\|_{\ell^2} \Big(\sum_k \frac{1}{1+d_k}\Big)^{1/2} \leq C \|(1+d_k)^{1/2} \psi_t\|_{\ell^2},\]
    where we use Item~\ref{item:S small} of Assumption~\ref{asmp:main-ode}. Then, applying Young's inequality, we have
    \[\frac{d}{dt} \|\psi_t\|_{\ell^2}^2 \leq \|\psi_t\|_{\ell^2}^2 + C \|v_t\|_{\ell^2}^2 \|\indc_{j \ne 0} z_0\|_{\ell^2}^2 +\|v_t\|_{\ell^2}^2 \|\gamma_t\|_{\ell^2}^2 \leq \|\psi_t\|_{\ell^2}^2 + C T^{-4} \ep^4 +CT^{-6} \ep^4,\]
    using the bounds~\eqref{eq:vt-ell-2-bound} and~\eqref{eq:gammat-ell-2-bound}. Thus by Gr\"onwall's inequality and using that $T \leq 1$,
    \[\|\psi_T\|_{\ell^2} \leq C T^{-3} \ep^2.\]
    Then combining this with~\eqref{eq:gamma-gone}, we have that
    \[\sqrt{\frac{\sum_{k \ne 1} |z^k_T|^2}{\sum_k |z^k_T|^2}} \leq \frac{\|\psi_T\|_{\ell^2}}{e^{-d_1 T} z_0^1 - \|\psi_T\|_{\ell^2}} \leq C T^{-3} \ep^2,\]
    where we use that since $\ep \leq \frac{1}{2},$ $z^1_0 \geq \frac{1}{2}$. Then $d_1, T \leq 1$, so $e^{-d_1 T} z_0^1 \geq \frac{1}{8}$, and finally we use that by assumption $\|\psi_T\|_{\ell^2} \leq C T^{-3} \ep^2 \leq \frac{1}{16},$ which together gives that $e^{-d_1 T} z_0^1 - \|\psi_T\|_{\ell^2} \geq \frac{1}{16}$. We note this is directly~\eqref{eq:main-perturbative-bound}.
\end{proof}

We now iterate Proposition~\ref{prop:perturbative-control} on dyadic intervals to force the error to $0$ on $[0,1]$.

\begin{corollary}
        \label{cor:perturbative-control-iterated}
      Let $d_k \geq 0$ be diffusion coefficients satisfying Assumption~\ref{asmp:main-ode}. Let $\ep \in (0,1/2)$ and let $z^k_0$ be such that
    \[\sqrt{\frac{\sum_{k \ne 1} |z^k_0|^2}{\sum_k |z^k_0|^2}} \leq \ep.\]
    Then there exists $C>0$ such that if 
    \[\ep \leq C^{-1},\]
    then there exists a coefficient field $v^k_t : [0,\infty) \to \C$ with $v^{-k} = \bar v^k, v^0 =0$ such that for some $\beta \in \C$,
    \begin{equation}
    \label{eq:mass-moved}
    z^k_1 = \beta \delta_{k,1}.
    \end{equation}
    Additionally, $v^k$ can be taken so that for each $k \in \N - \{0\}$ and each $n \in \N$, 
    \begin{equation}
    \label{eq:v-regularity-iterated}
    \sup_{t \in [0,1]} |v^k_t| \leq C(n) (1+d_{1-k})^{-n}\ep^{1/2}.
    \end{equation}
\end{corollary}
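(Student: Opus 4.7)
The plan is to iterate Proposition~\ref{prop:perturbative-control} on a dyadic sequence of shrinking time intervals partitioning $[0,1)$, so that the infinite chain of perturbative corrections drives the error to exactly zero at time $t = 1$. Concretely, let $T_j := 2^{-j-1}$ and $I_j := [1 - 2^{-j}, 1 - 2^{-j-1}]$, so $|I_j| = T_j$ and $\bigsqcup_{j\geq 0} I_j = [0, 1)$. On each $I_j$, after an obvious time translation, define the coefficient field $v^k_t$ by applying Proposition~\ref{prop:perturbative-control} with $T = T_j$ and initial data $z^k_{1 - 2^{-j}}$ (the solution at the left endpoint of $I_j$); extend $v^k \equiv 0$ on $[1, \infty)$.

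The core estimate is a rapid decay of the normalized error $\ep_j := \sqrt{\sum_{k \neq 1}|z^k_{1-2^{-j}}|^2 / \sum_k |z^k_{1-2^{-j}}|^2}$ along the iteration, with $\ep_0 = \ep$. Proposition~\ref{prop:perturbative-control} yields the recurrence $\ep_{j+1} \leq C T_j^{-3}\ep_j^2 = 8 C \cdot 8^j \ep_j^2$; after the renormalization $\tilde\ep_j := 64 C \cdot 2^{3j}\ep_j$ this collapses to the pure quadratic recurrence $\tilde\ep_{j+1} \leq \tilde\ep_j^2$, and hence $\tilde\ep_j \leq \tilde\ep_0^{2^j}$, giving
\[
\ep_j \leq \frac{(64 C \ep)^{2^j}}{64 C \cdot 8^j}.
\]
Enlarging the universal constant $C$ in the corollary statement so that $\ep \leq C^{-1}$ forces $64 C \ep \leq 1/2$, this double-exponential decay comfortably dominates the merely exponential factor $C^{-1} T_j^{3/2} = C^{-1} 2^{-3(j+1)/2}$, so the hypothesis of Proposition~\ref{prop:perturbative-control} needed to launch the iteration at $I_j$ is maintained inductively for all $j$. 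Sending $j \to \infty$ gives $\ep_j \to 0$, and since $z^k_t$ depends continuously on $t$, we conclude $z^k_1 = 0$ for all $k \neq 1$, yielding~\eqref{eq:mass-moved} with $\beta := z^1_1$.

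For the regularity bound~\eqref{eq:v-regularity-iterated}, the $n$-th regularity estimate of Proposition~\ref{prop:perturbative-control} on each interval gives $\sup_{t \in I_j} |v^k_t| \leq C(n) (1 + d_{1-k})^{-n} T_j^{-2-n} \ep_j$, so it suffices to establish $\sup_j T_j^{-2-n} \ep_j \leq C(n) \ep^{1/2}$. Substituting our double-exponential bound on $\ep_j$ reduces this to controlling $\sup_j 2^{(n-1)j}(64 C \ep)^{2^j}$. The main (mild) subtlety I anticipate is in this last estimate: the double-exponential factor $(64 C \ep)^{2^j}$ overwhelms the geometric growth $2^{(n-1)j}$ for any fixed $n$, but one must separate the regime in which the supremum is attained at $j = 0$ (small $\ep$, in which case the bound $64 C \ep \leq C(n) \ep^{1/2}$ is trivial from $\ep \leq 1$) from the regime in which it is attained at some finite $j^* \geq 1$ (moderate $\ep$, where $\ep^{1/2}$ is bounded below and the supremum is a constant depending only on $n$ and the universal $C$); a direct calculus maximization of $2^{(n-1)j}(64C\ep)^{2^j}$ in $j$ handles both regimes simultaneously.
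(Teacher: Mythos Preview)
Your proposal is correct and follows essentially the same approach as the paper: iterate Proposition~\ref{prop:perturbative-control} on dyadic subintervals of $[0,1)$, track the recurrence $\ep_{j+1} \leq D 2^{3j}\ep_j^2$, and feed the resulting double-exponential decay into the regularity estimate~\eqref{eq:v-regularity}. Your substitution $\tilde\ep_j = 64C\cdot 8^j \ep_j$ reducing to $\tilde\ep_{j+1}\leq\tilde\ep_j^2$ is a slightly slicker way to solve the recurrence than the paper's explicit unwinding, and your final regularity discussion---while a touch roundabout with the ``two regimes''---is easily streamlined via the one-line factorization $2^{(n-1)j}(64C\ep)^{2^j} = 2^{(n-1)j}(64C\ep)^{2^j-1/2}\cdot(64C\ep)^{1/2} \leq \big(\sup_j 2^{(n-1)j}2^{-2^j+1/2}\big)(64C)^{1/2}\ep^{1/2}$, which is essentially what the paper does after establishing $\delta_j \leq 2^{-2^j}\ep^{1/2}$.
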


\begin{proof}
    We want to inductively apply Proposition~\ref{prop:perturbative-control} with $T_j = 2^{-j}$ for $j =1,2,...$. Define for $j=0,1,2,...$,
    \[\ep_j := \sqrt{\frac{\sum_{k \ne 1} |z^k_{1 - 2^{-j}}|^2}{\sum_{k} |z^k_{1 - 2^{-j}}|^2}},\]
    so that $\ep_0 =\ep.$ In order to apply Proposition~\ref{prop:perturbative-control}, we need to verify at each step that
    \[\ep_j \leq D^{-1} 2^{-3j/2}\]
    for some $D>0$ coming from Proposition~\ref{prop:perturbative-control}. Provided this condition holds, Proposition~\ref{prop:perturbative-control} gives that
    \[\ep_{j+1} \leq D 2^{3j} \ep_j^2.\]
    As such, we define the recursive sequence $\delta_0 = \ep,$ $\delta_{j+1} = D 2^{3j} \delta_j^2$. In order to conclude, we just need to show that
    \begin{equation}
    \label{eq:delta-bound}
    \delta_j \leq D^{-1} 2^{-3j/2},
    \end{equation}
    as if that holds, we inductively get that $\ep_j \leq \delta_j \to 0.$

    We compute $\delta_j$ explicitly:
    \begin{align*}
    \delta_j &= D 2^{3j} \delta_{j-1}^2 
    \\&= D 2^{3j} \big( D 2^{3(j-1)}\delta_{j-2}^2\big)^2 
    \\&= D^{2^0 + 2^1} 2^{3j2^0 + 3(j-1)2^1} \big(D 2^{2(j-2)}\delta_{j-3}^2\big)^{2^2}
    \\&=\cdots = D^{\sum_{k=1}^j 2^{j-k}} 2^{ 3\sum_{k=1}^j k 2^{j-k}} \ep^{2^j}
    \\&\leq (D2^{3 \sum_{k=1}^j k 2^{-k}} \ep)^{2^j} \
    \\&\leq (C\ep)^{2^j} .
    \end{align*}
    Thus choosing $\ep>0$ sufficiently small depending $D$, we see we get~\eqref{eq:delta-bound} as well as
    \[\delta_j \leq 2^{-2^j} \ep^{1/2},\] 
    allowing us in particular to conclude~\eqref{eq:mass-moved}.
    
    For~\eqref{eq:v-regularity-iterated}, by~\eqref{eq:v-regularity}, we have that
    \begin{align*}\sup_{t \in [0,1]} |v^k_t| &\leq C(n)  d_{1-k}^{-n}\sup_{j \in \N} 2^{(2+n)j} \ep_j 
    \\&\leq C(n)  (1+d_{1-k})^{-n}\sup_{j \in \N} 2^{(2+n)j}  2^{-2^j} \ep^{1/2}
    \\&\leq C(n) (1+d_{1-k})^{-n} \ep^{1/2},
    \end{align*}
    as desired.
\end{proof}

By combining Proposition~\ref{prop:move-all-mass-off-zero} with Corollary~\ref{cor:perturbative-control-iterated}, we are now ready to prove Theorem~\ref{thm:ode-mass-moved}.

\begin{proof}[Proof of Theorem~\ref{thm:ode-mass-moved}]
    We define a coefficient field $v^k_t$. On $t \in [0,1]$, we take $v^k_t$ as in Proposition~\ref{prop:move-all-mass-off-zero}. We fix $\tau \geq 1$ and on $t \in [1,1+\tau]$, we take $v^k_t = 0$ for all $k \in \Z$. On $[1+\tau,2+\tau]$, we take $v^k_t$ as in Corollary~\ref{cor:perturbative-control-iterated}. Finally, we take $v^k_t = 0$ for $t > 2+\tau$. We first seek to prove~\eqref{eq:mass moved theorem}. We note this is direct from Corollary~\ref{cor:perturbative-control-iterated}, provided we can verify the hypotheses.

    First however, we note that the hypotheses of Theorem~\ref{thm:ode-mass-moved} are precisely those of Proposition~\ref{prop:move-all-mass-off-zero}, as such at time $1$, we have that
    \begin{equation}
    \label{eq:time 1 z}
    z^0_1 =0 \quad \text{and} \quad |z^1_1| \geq \frac{1}{96},
    \end{equation}
    and further
    \begin{equation}
    \label{eq:small time regularity bound}
    \sup_{t \in [0,1+\tau)} |v^k_t| \leq C \big(\delta_{k,1} + \delta_{k,-1}\big).
    \end{equation}
    Using that $v^k_t =0$ on $(1,1+\tau)$, using that $d_1 =1$ and~\eqref{eq:time 1 z}, we have that
    \[z^0_{1+\tau} =0 \quad \text{and} \quad |z^1_{1+\tau}| \geq \frac{e^{-\tau}}{96},\]
    while for $k \ne 0,1,$
    \[|z^k_{1+\tau}| \leq e^{-M\tau} |z^k_1|.\]
    Thus 
    \[\sum_k |z^k_0|^2 \geq \frac{e^{-2\tau}}{96^2},\]
    while
    \[\sum_{k \ne 1} |z^k_0|^2 \leq e^{-2M\tau} \sum_{k \ne 1} |z^k_1|^2 \leq e^{-2M\tau}.\]
    Therefore,
    \[\sqrt{\frac{\sum_{k \ne 1} |z^k_0|^2}{\sum_k |z^k_0|^2}}  \leq 96 e^{-(M-1)\tau} \leq 96  e^{-M\tau/2}.\]
    Choosing then $\tau \geq 4$ sufficiently large, we verify the hypotheses of Corollary~\ref{cor:perturbative-control-iterated}, yielding~\eqref{eq:mass moved theorem} by~\eqref{eq:mass-moved} and we get by~\eqref{eq:v-regularity-iterated} that for any $k \in \N - \{0\}, n \in \N,$
    \[\sup_{t \in [\tau+1, \infty)} |z^k_t| \leq C(n) (1+d_{1-k})^{-n} e^{-M}.\]
    Combining the above with~\eqref{eq:small time regularity bound} gives~\eqref{eq:v regularity theorem}.
\end{proof}

\begin{proof}[Proof of Proposition~\ref{prop:flow down hill}]
    We note that $T>0$ can depend on the diffusion coefficients, so a ``soft argument'' suffices: we don't need to keep track of how large $T$ is provided it stays finite.

    For some $\tau>0$ to be determined, we define for $t \in [0,\tau)$
    \[v^k_t = \eta \delta_{k,1} + \eta \delta_{k,-1}.\]
    We note then that directly from~\eqref{eq:main-ode}, using that $z^1_0 = z^{2}_0 = 0$ and $z^0_0=1$.
    \[\dot z^1_0 = i \eta.\]
    Thus (e.g.\ by Taylor expansion) there exists a choice of $\tau$ sufficiently small so that
    \[\rho := |z^1_\tau|>0.\]
    Then for some $\sigma>0$, we define on $[\tau, \tau+\sigma)$, $v^k_t =0.$ We thus have for $k \in \Z$,
    \[|z^k_{\tau+\sigma}| \leq e^{-d_k\sigma} |z^k_\tau|.\]
    Thus using that $d_1 =0$ and for $k \ne 1$ $d_k \geq 1$, we have that
    \[\sqrt{\frac{\sum_{k \ne 1} |z^k_{\sigma+\tau}|^2}{\sum_k |z^k_{\sigma + \tau}|^2}}  \leq e^{-\sigma} \frac{\sqrt{\sum_{k} |z^k_\tau|^2}}{\rho} \leq \rho^{-1} e^{-\sigma}.\]
    Taking then $\sigma$ sufficiently large, we verify the hypotheses of Corollary~\ref{cor:perturbative-control-iterated}. Then~\eqref{eq:down hill mass moved} is direct from~\eqref{eq:mass-moved} and~\eqref{eq:v-regularity-iterated} gives that
    \[\sup_{t \in [\tau+\sigma, \tau+\sigma+1]} \leq C(n) (1+d_{1-k})^{-n} \rho^{-1/2} e^{-\sigma/2}.\]
    Choosing $\sigma$ sufficiently large and using that $|v^k_t| \leq \eta$ for $t \leq \tau +\sigma$, we conclude~\eqref{eq:down hill regularity}.
\end{proof}

\section{Proof of Propositions~\ref{prop:fourier to fourier 2d}--\ref{prop:fourier to fourier 4d-2}}
\label{s:proof of props}

In Section~\ref{s:ODE}, we have completed the technical crux of the proof. What remains is to translate the ODE results into results about the advection-diffusion equation and then to apply those translated results to prove Propositions~\ref{prop:fourier to fourier 2d}--\ref{prop:fourier to fourier 4d-2}.

\subsection{Translating from ODE to PDE}

We use the notation that we are starting from the Fourier mode $a \in \Z^d - \{0\}$ and want to move the mass onto the Fourier mode $a+b$ for some $b \in \Z^d - \{0\}.$ The following definition and proposition translate between the ODE system and the advection-diffusion equation. Note that even after fixing $a,b$, we have the free parameters $A,L$. These will be used to make $d_0 = 0, d_1=1$ or $d_0=1, d_1=0$ so as to fit into the hypotheses of Theorem~\ref{thm:ode-mass-moved} or Proposition~\ref{prop:flow down hill} respectively.

\begin{definition}
\label{defn:1d-to-any-d}
    For any $a,b \in \Z^d- \{0\}$ with $|a \cdot b| < |a| |b|$, define 
    \[\ell_{a,b} = (|a|\alpha_{a,b})^{-1} \Big(a - \frac{a \cdot b}{|b|^2} b\Big) \quad \text{and}\quad \alpha_{a,b} := \Big(1 - \frac{(a\cdot b)^2}{|a|^2|b|^2}\Big)^{1/2},\]
    noting that $|\ell_{a,b}| = 1$, $\ell_{a,b} \cdot b =0.$ If we also have $v : [0,\infty) \times \T \to \R$, define for any $L>0$, $w_{a,b,v,L} : [0,\infty) \times \T^d \to \R^d$ by
    \[w_{a,b,v,L}(t,x) :=  \frac{L\ell_{a,b}}{\alpha_{a,b}|a|} v(Lt, b \cdot x).\]
\end{definition}

\begin{proposition}
\label{prop:ode-to-pde}
    For any $a,b \in \Z^d- \{0\}$ with $|a \cdot b| < |a| |b|$ and any $v : [0,\infty) \times \T \to \R$, taking $\alpha_{a,b}, \ell_{a,b},$ and $w_{a,b,v,L}$ as defined in Definition~\ref{defn:1d-to-any-d}, we have the following properties.
    \begin{enumerate}
        \item \label{item:div-free} $\nabla \cdot w_{a,b,v,L} = 0.$
        \item \label{item:regularity} $\|w_{a,b,v,L}\|_{L^\infty([0,\infty), W^{n,\infty}(\T^d))} \leq \frac{|b|^n L}{\alpha_{a,b}|a|} \|v\|_{L^\infty([0,\infty), W^{n,\infty}(\T^d))}.$
        \item \label{item:equation} Let $\theta_t$ solve~\eqref{eq:advection-diffusion} with advecting flow $w_{a,b,v,L}$ and initial data $\theta_0 = f_a$. Let $z^k_t$ solve~\eqref{eq:main-ode} with coefficient field $v^k_t := \hat v(t,k)$, dissipation coefficients
        \[d_k := \frac{|a+kb|^2}{L} - A,\]
        and initial data $z^k_0 = \delta_{k,0}$. Then for all $t \geq 0$,
        \[\theta_{t/L} = e^{-At}\sum_{k \in \Z} z^k_t f_{a + kb}.\]
            \end{enumerate}
\end{proposition}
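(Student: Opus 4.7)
The plan is to verify each item in turn, with the bulk of the work being in Item~\ref{item:equation}. Items~\ref{item:div-free} and~\ref{item:regularity} are immediate consequences of the construction. For Item~\ref{item:div-free}, since $\ell_{a,b}$ was chosen perpendicular to $b$, the chain rule gives $\nabla \cdot w_{a,b,v,L}(t,x) = \tfrac{L}{\alpha_{a,b}|a|}(\ell_{a,b}\cdot b)\, v'(Lt, b\cdot x) = 0$. For Item~\ref{item:regularity}, each spatial derivative of $v(Lt, b\cdot x)$ produces a factor of the components of $b$, and since $|\ell_{a,b}| = 1$, the stated $W^{n,\infty}$ bound follows by the chain rule together with the definition of $w_{a,b,v,L}$.

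For Item~\ref{item:equation}, I would first rescale time by setting $\tilde\theta_t := \theta_{t/L}$, so that $\partial_t \tilde\theta_t = \tfrac{1}{L}\Delta \tilde\theta_t + \tfrac{\ell_{a,b}}{\alpha_{a,b}|a|} v(t, b\cdot x) \cdot \nabla \tilde\theta_t$. Into this equation I would substitute the ansatz $\tilde\theta_t = e^{-At}\sum_k z^k_t f_{a+kb}$, expand $v(t, b\cdot x) = \sum_j v^j_t f_{jb}$ in its Fourier series in $b\cdot x$ (so that $v^j_t = \hat v(t,j)$), and match the coefficient of $f_{a+kb}$ on both sides. The Laplacian contributes $-|a+kb|^2/L$, and combined with the $-A$ coming from $\partial_t e^{-At}$, this produces precisely $-d_k z^k_t$ on the right-hand side of the $z^k$ equation.

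The key algebraic cancellation, which is exactly what the definitions of $\ell_{a,b}$ and $\alpha_{a,b}$ are engineered to produce, is the identity $\ell_{a,b}\cdot (a+kb) = \ell_{a,b}\cdot a = \alpha_{a,b}|a|$ for every $k \in \Z$; this follows from $\ell_{a,b}\cdot b = 0$ together with a direct computation of $\ell_{a,b}\cdot a$ from the formula. It removes the $k$-dependence from the prefactor of the advection term and cancels $\tfrac{1}{\alpha_{a,b}|a|}$ cleanly, yielding the nonlinearity $i\sum_j v^j_t z^{k-j}_t$ from the convolution structure of multiplying Fourier series. The initial data $\tilde\theta_0 = f_a$ matches $z^k_0 = \delta_{k,0}$, and uniqueness for the linear ODE system (equivalently, uniqueness for the linear PDE) finishes the proof. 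I do not anticipate any real obstacle; the only content is the single algebraic identity above, with the rest being bookkeeping in Fourier space.
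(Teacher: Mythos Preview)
Your proposal is correct and follows essentially the same approach as the paper: both verify Items~\ref{item:div-free} and~\ref{item:regularity} as direct consequences of $\ell_{a,b}\cdot b=0$ and the chain rule, and both prove Item~\ref{item:equation} by substituting the ansatz into the time-rescaled PDE and matching Fourier coefficients, with the identity $\ell_{a,b}\cdot(a+kb)=\alpha_{a,b}|a|$ being precisely the cancellation used in the paper (there left implicit in the passage from $((m+n)b+a)\cdot\ell_{a,b}$ to the constant factor). Your explicit invocation of uniqueness is a small addition the paper leaves tacit but is entirely appropriate.
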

\begin{proof}
    Item~\ref{item:div-free} is direct from $b \cdot \ell_{a,b} = 0.$ Item~\ref{item:regularity} is also a direct computation. For Item~\ref{item:equation}, taking the time derivative of both sides gives
    \begin{align*}
        \sum_{k \in \Z} (\dot z^k_t  - A z^k_t)f_{a+kb} &= \frac{1}{L} \dot \theta_{t/L}
        \\&= \frac{1}{L} \Delta \theta(t/L,x) + \frac{1}{L} \nabla \cdot \big(w_{a,b,\ell}(t/L,x) \theta(t/L,x)\big)
        \\&=  -\sum_{k \in \Z} \frac{|a+kb|^2}{L} z^k_t f_{a+kb} + \frac{1}{\alpha_{a,b}|a|}\sum_{m,n \in \Z} \nabla \cdot ( \ell_{a,b} v^m_{t}z^n_t  f_{mb} f_{a+nb})
        \\&=  -\sum_{k \in \Z} \frac{|a+kb|^2}{L} z^k_t f_{a+kb} + \frac{i}{\alpha_{a,b}|a|}\sum_{m,n \in \Z}  ((m+n)b + a)\cdot  \ell_{a,b} v^m_{t}z^n_tf_{a+(m+n)b})
        \\&=   -\sum_{k \in \Z} \frac{|a+kb|^2}{L} z^k_t f_{a+kb} + i\sum_{k,j \in \Z}v^j_{t}z^{k-j}_tf_{a+kb},
    \end{align*}
    agreeing with the equation for $z^n_t$, as claimed.
\end{proof}

We now can write a version of Theorem~\ref{thm:ode-mass-moved} but for the advection-diffusion equation. This somewhat messy statement allows us to move mass from the Fourier mode $a$ to $a+b$ given some computational verifiable hypotheses. It further records the time needed to move the mass as well as the regularity of the advecting flow.

\begin{corollary}
\label{cor:move mass theta}
    Let $a,b \in \Z^d - \{0\}$ with $|a \cdot b| < |a||b|$ such that for all $k \in \Z$, $|a + kb| \geq |a|$. Then define for $k \in \Z$,
    \[d_k := \frac{|a+kb|^2}{L} - A,\quad A := \frac{|a|^2}{L},\quad L:= |a+b|^2- |a|^2,\]
    so that $d_0=0, d_1=1.$ Defining, as above,
    \[M:= \min_{k\in \Z, k \ne 0,1} d_k \quad\text{and}\quad S:= \sum_{k \in \Z} \frac{1}{1+d_k},\]
    suppose that
    \begin{enumerate}
        \item \label{item:M large theta} $M \geq 2^{26},$
        \item \label{item:S small theta} $S \leq 6,$
        \item \label{item:d spacing theta} For all $k \in \N - \{0\}, d_{k+1} - d_{1-k} \geq 1.$
    \end{enumerate}
    Then there exists a universal $T>0$ and a velocity field $u : [0,\infty) \times \T^d \to \R^d$ depending on $a,b$ such that if we let $\theta_t$ solve~\eqref{eq:advection-diffusion} with advecting flow $u$ and initial data $\theta_0 = f_a$, then 
    \begin{enumerate}
        \item \label{item:u div-free} $\nabla \cdot u =0$,
        \item \label{item:u regularity} for all $n \in \N$, 
        \[\|u\|_{L^\infty([0,\infty), W^{n,\infty}(\T^d))} \leq  \frac{C(n) |b|^n (|a+b|^2 - |a|^2)}{|a| \alpha_{a,b}}\Big(1+ e^{-M}\sum_{k= 0}^\infty \frac{|k|^n}{(1 + d_{-k})^{n+1}}\Big),\]
         \item \label{item:theta fourier support} for all $t \geq 0$, $\supp \hat \theta_t \subseteq \Z^2 - B_{|a|}$,
        \item \label{item:theta mass moved} for some $\beta \in \C$, $\theta_{T/L} = \beta f_{a+b}.$
    \end{enumerate}
\end{corollary}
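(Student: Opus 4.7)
The plan is to combine Theorem~\ref{thm:ode-mass-moved} with Proposition~\ref{prop:ode-to-pde} essentially as a black box. First I would verify that the coefficients $d_k := (|a+kb|^2 - |a|^2)/L$ satisfy Assumption~\ref{asmp:main-ode}: Item~\ref{item:d0 d1} holds because by construction $d_0=0$ and $d_1=1$, while Items~\ref{item:M large}, \ref{item:S small}, and \ref{item:dk separated} are literally Items~\ref{item:M large theta}, \ref{item:S small theta}, and \ref{item:d spacing theta} of the corollary. Theorem~\ref{thm:ode-mass-moved} then furnishes a universal $T>0$, together with a coefficient field $v^k_t$ with $v^{-k}_t=\overline{v^k_t}$, $v^0_t=0$, such that the solution $z^k_t$ of \eqref{eq:main-ode} satisfies $z^k_T=\beta\delta_{k,1}$ and obeys the regularity bound~\eqref{eq:v regularity theorem}.

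Given $v^k_t$, I would form the function $v(t,y):=\sum_{k\in\Z} v^k_t e^{iky}$; the reality condition $v^{-k}_t=\overline{v^k_t}$ makes $v$ real-valued. Setting $u:=w_{a,b,v,L}$ as in Definition~\ref{defn:1d-to-any-d}, Item~\ref{item:u div-free} is immediate from Item~\ref{item:div-free} of Proposition~\ref{prop:ode-to-pde}. For Item~\ref{item:theta mass moved}, Item~\ref{item:equation} of that proposition gives, at time $t=T$, $\theta_{T/L}=e^{-AT}\sum_k z^k_T f_{a+kb}=e^{-AT}\beta f_{a+b}$, which is of the required form after rebranding the constant. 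For Item~\ref{item:theta fourier support}, the same identity shows $\supp\hat\theta_{s}\subseteq\{a+kb:k\in\Z\}$ for all $s\ge 0$, and by the standing hypothesis $|a+kb|\ge|a|$, so this set is contained in $\Z^d-B_{|a|}$.

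The main technical step is Item~\ref{item:u regularity}, the regularity bound. By Item~\ref{item:regularity} of Proposition~\ref{prop:ode-to-pde},
\[\|u\|_{L^\infty_t W^{n,\infty}_x(\T^d)} \le \frac{|b|^n L}{\alpha_{a,b}|a|}\,\|v\|_{L^\infty_t W^{n,\infty}_y(\T)},\]
and since $v(t,\cdot)$ is a Fourier series on $\T$, $\|v(t,\cdot)\|_{W^{n,\infty}(\T)}\lesssim \sum_{k\in\Z}(1+|k|^n)|v^k_t|$. I would then apply the theorem's estimate~\eqref{eq:v regularity theorem} at exponent $n+1$ (so that $C(n+1)$ appears) to get, using $v^{-k}_t=\overline{v^k_t}$,
\[\sum_{k\in\Z}(1+|k|^n)|v^k_t| \;\le\; C(n) \;+\; C(n)\,e^{-M}\sum_{k=0}^\infty \frac{|k|^n+1}{(1+d_{-k})^{n+1}},\]
where the $O(1)$ part absorbs the $\delta_{k,\pm 1}$ contributions and the substitution $k\mapsto k-1$ converts $d_{1-k}$ into $d_{-k}$ (up to adjusting the constant $C(n)$). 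Combining with the prefactor $|b|^n L/(\alpha_{a,b}|a|)$ and using $L=|a+b|^2-|a|^2$ yields exactly the claimed bound. The only genuinely delicate accounting is matching the exponents in this sum to the statement's $(1+d_{-k})^{n+1}$; this is the place where I would be most careful. Everything else is bookkeeping.
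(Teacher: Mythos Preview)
Your proposal is correct and follows essentially the same route as the paper's own proof: apply Theorem~\ref{thm:ode-mass-moved} to obtain $T$ and $v^k_t$, assemble $v$ as a Fourier series, set $u=w_{a,b,v,L}$, and read off Items~\ref{item:u div-free}, \ref{item:theta fourier support}, \ref{item:theta mass moved} from Proposition~\ref{prop:ode-to-pde} while deducing Item~\ref{item:u regularity} from Item~\ref{item:regularity} together with~\eqref{eq:v regularity theorem} applied with exponent $n+1$ and an index shift. The only cosmetic difference is that the paper absorbs the $e^{-AT}$ factor into $\beta$ up front by writing $z^k_T = e^{AT}\beta\,\delta_{k,1}$, whereas you absorb it at the end; the bookkeeping is otherwise identical.
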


\begin{proof}
    We note that our assumptions directly allow us to apply Theorem~\ref{thm:ode-mass-moved} to give a universal $T>0$ and a coefficient field $v^k_t$ with $v^{-k}_t = \bar v^k_t$ such that for $z^k_t$ the solution to~\eqref{eq:main-ode} with coefficients $d_k, v^k_t$ and initial data $z^k_0 = \delta_{k,0}$, such that for some $\beta \in \C$, 
    \begin{equation}
    \label{eq:z mass moved}
    z^k_T = e^{AT}\beta \delta_{k,1},
    \end{equation}
    and for all $n \in \N$, there exists $C(n) >0$ such that for all $k \in \N - \{0\}$,
    \begin{equation}
    \label{eq:vk pointwise}
    \sup_{t \geq 0} |v^k_t| \leq C (1 + d_{1-k})^{-n} e^{-M/4} + C (\delta_{k,1} + \delta_{-k,1}).
    \end{equation}
    We let $v : [0,\infty) \times \T \to \R$ be given by $\hat v_t(k)= v^k_t$, noting that $v$ is $\R$-valued as $v^{-k}_t = \bar v^k_t.$ We then let $u = w_{a,b,v,L}$ as in Definition~\ref{defn:1d-to-any-d}. Then by Proposition~\ref{prop:ode-to-pde}, we have that for $t \geq 0,$
    \begin{equation}
    \label{eq:theta-rep}
    \theta_{t/L} = e^{-At} \sum_{k \in \Z} z^k_t f_{a+kb}.
    \end{equation}
    We now verify our conclusions. Item~\ref{item:u div-free} is direct from Item~\ref{item:div-free} of Proposition~\ref{prop:ode-to-pde}. Item~\ref{item:theta fourier support} is direct from the representation given by~\eqref{eq:theta-rep} and the hypothesis that $|a+kb| \geq |a|$ for $k \in \Z$. Then Item~\ref{item:theta mass moved} is direct from~\eqref{eq:theta-rep} and~\eqref{eq:z mass moved}. Thus all that remains is to verify Item~\ref{item:u regularity}. To that end, we note that by Item~\ref{item:regularity} of Proposition~\ref{prop:ode-to-pde} and~\eqref{eq:vk pointwise},
    \begin{align*}
        \|u\|_{L^\infty_t W^{n,\infty}_x} &\leq\frac{|b|^n L}{\alpha_{a,b} |a|} \|v\|_{L^\infty([0,\infty), W^{n,\infty})}
        \\&\leq \sup_{t \geq 0}\frac{C(n) |b|^n (|a+b|^2 - |a|^2)}{\alpha_{a,b}|a|} \sum_{k \in \Z} |k|^n |v^k_t|
        \\&\leq\frac{C(n) |b|^n (|a+b|^2 - |a|^2)}{\alpha_{a,b}|a|}\Big(1+ e^{-M}\sum_{k= 2}^\infty \frac{|k|^n}{(1 + d_{1-k})^{n+1}}\Big)
        \\&\leq \frac{C(n) |b|^n (|a+b|^2 - |a|^2)}{\alpha_{a,b}|a|}\Big(1+ e^{-M}\sum_{k= 0}^\infty \frac{|k|^n}{(1 + d_{-k})^{n+1}}\Big),
    \end{align*}
    as claimed.
\end{proof}

We now write a version of Proposition~\ref{prop:flow down hill} but for the advection-diffusion equation, just as we did for Theorem~\ref{thm:ode-mass-moved}.

\begin{corollary}
    \label{cor:move mass theta downhill}
    Let $a,b \in \Z^d - \{0\}$ with $|a \cdot b| \leq |a||b|$ such that for all $k \in \Z$, $|a+kb| \geq |a+b|$. Then define for $k \in \Z$, 
    \[d_k := \frac{|a+kb|^2}{L} - A,\quad A:= \frac{|a+b|^2}{L}, \quad L := |a|^2 - |a+b|^2,\]
    so that $d_0 = 1, d_1 =0$.  Defining, as above,
    \[M:= \min_{k\in \Z, k \ne 0,1} d_k \quad\text{and}\quad S:= \sum_{k \in \Z} \frac{1}{1+d_k},\]
    suppose that
    \begin{enumerate}
        \item \label{item:M large theta downhill} $M \geq 2^{26},$
        \item \label{item:S small theta downhill} $S \leq 6,$
        \item \label{item:d spacing theta downhill} For all $k \in \N - \{0\}, d_{k+1} - d_{1-k} \geq 1.$
    \end{enumerate}
    Then for any $\eta>0$ there exists a velocity field $u : [0,\infty) \times \T^d \to \R^d$ and a time $T>0$ depending on $a,b,\eta$ such that if we let $\theta_t$ solve~\eqref{eq:advection-diffusion} with advecting flow $u$ and initial data $\theta_0 = f_a$, then 
    \begin{enumerate}
        \item \label{item:u div-free downhill} $\nabla \cdot u =0$,
        \item \label{item:u regularity downhill} for all $n \in \N$,
        \[\|u\|_{L^\infty([0,\infty), W^{n,\infty}(\T^d))} \leq  \frac{C(n) \eta |b|^n (|a+b|^2 - |a|^2)}{\alpha_{a,b} |a|} \sum_{k \geq 0} \frac{|k|^n}{(1+d_{-k})^{n+1}},\]
         \item \label{item:theta fourier support downhill} for all $t \geq 0$, $\supp \hat \theta_t \subseteq \Z^2 - B_{|a+b|}$,
        \item \label{item:theta mass moved downhill} for some $\beta \in \C$, $\theta_{T} = \beta f_{a+b}.$
    \end{enumerate}
\end{corollary}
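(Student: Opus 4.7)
The plan is to mirror the proof of Corollary~\ref{cor:move mass theta} line by line, with Proposition~\ref{prop:flow down hill} substituted for Theorem~\ref{thm:ode-mass-moved} as the ODE-level input. The definitions $L := |a|^2 - |a+b|^2$ and $A := |a+b|^2/L$ are engineered precisely so that $d_0 = 1$ and $d_1 = 0$, matching the hypotheses on $d_0, d_1$ in Proposition~\ref{prop:flow down hill}; Items~\ref{item:M large theta downhill}--\ref{item:d spacing theta downhill} of the corollary's hypotheses correspond exactly to Items~\ref{item:M large}--\ref{item:dk separated} of Assumption~\ref{asmp:main-ode}. The assumption $|a+kb| \geq |a+b|$ for all $k$ ensures $d_k \geq 0$ throughout.

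Given $\eta > 0$, I would apply Proposition~\ref{prop:flow down hill} to produce a time $T_0 > 0$ and a coefficient field $v^k_t$ with $v^{-k}_t = \bar v^k_t$, $v^0_t = 0$, and $z^k_{T_0} = \beta \delta_{k,1}$ for the ODE~\eqref{eq:main-ode} with data $z^k_0 = \delta_{k,0}$, together with the decay bound $\sup_t |v^k_t| \leq C(n) \eta (1+d_{1-k})^{-n}$ for each $n$. Building $v : [0,\infty) \times \T \to \R$ by $\hat v_t(k) := v^k_t$ (real-valued by the reality constraint) and setting $u := w_{a,b,v,L}$ via Definition~\ref{defn:1d-to-any-d} furnishes the candidate velocity field.

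Three of the four conclusions then fall out of Proposition~\ref{prop:ode-to-pde} immediately. Item~\ref{item:div-free} of that proposition gives Item~\ref{item:u div-free downhill}; its Item~\ref{item:equation} gives the representation
\[\theta_{t/L} = e^{-At} \sum_{k \in \Z} z^k_t f_{a+kb},\]
which combined with the standing hypothesis $|a+kb| \geq |a+b|$ delivers Item~\ref{item:theta fourier support downhill}, and evaluating at $t = T_0$ using $z^k_{T_0} = \beta \delta_{k,1}$ delivers Item~\ref{item:theta mass moved downhill} (with physical time $T := T_0/L$ and the factor $e^{-AT_0}$ absorbed into the constant $\beta$); all of these depend on $a, b, \eta$, which is permitted.

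The one remaining step, and the only place requiring genuine computation, is the Sobolev bound. Combining Item~\ref{item:regularity} of Proposition~\ref{prop:ode-to-pde} with the $v^k_t$ decay estimate gives
\[\|u\|_{L^\infty_t W^{n,\infty}_x} \leq \frac{|b|^n L}{\alpha_{a,b} |a|} \sup_{t \geq 0} \sum_{k \in \Z} |k|^n |v^k_t|.\]
I would split the sum over $k$ by sign (the $k=0$ term vanishes since $v^0 = 0$), use $|v^{-k}_t| = |v^k_t|$ to fold the two halves together, apply the Proposition~\ref{prop:flow down hill} bound with exponent $n+1$ to get convergence, and reindex $j = k-1$ to convert $(1+d_{1-k})^{-(n+1)}$ into $(1+d_{-j})^{-(n+1)}$. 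Absorbing the $\eta$ prefactor and identifying $L$ with $|a+b|^2 - |a|^2$ (up to sign, which the stated bound absorbs) produces the advertised estimate. This reindexing is the only technical subtlety, and it is purely bookkeeping; no new ideas beyond those of the uphill Corollary~\ref{cor:move mass theta} are needed.
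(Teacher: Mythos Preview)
Your proposal is correct and follows essentially the same approach as the paper's own proof: apply Proposition~\ref{prop:flow down hill} to obtain the coefficient field, build $u = w_{a,b,v,L}$ via Definition~\ref{defn:1d-to-any-d}, and read off all four conclusions from Proposition~\ref{prop:ode-to-pde} together with the reindexing $k \mapsto k-1$ in the regularity estimate. The paper's proof is essentially your outline verbatim, including the same treatment of the sign of $L$ in the Sobolev bound.
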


\begin{proof}
    We note that our assumptions directly allow us to apply Proposition~\ref{prop:flow down hill} to give a coefficient field $v^k_t$ with $v^{-k}_t = \bar v^k_t$ such that for $z^k_t$ the solution to~\eqref{eq:main-ode} with coefficients $d_k, v^k_t$ and initial data $z^k_0 = \delta_{k,0}$, such that for some $\beta \in \C$, 
    \begin{equation}
    \label{eq:downhill mass moved z}
    z^k_{LT} = e^{ALT} \beta \delta_{k,1},
    \end{equation}
    and for all $n \in \N$, there exists $C(n)>0$ such that for all $k \in \N - \{0\}$,
    \begin{equation}
    \label{eq:v bound downhill}
    \sup_{t \geq 0} |v^k_t| \leq C \eta (1 + d_{1-k})^{-n}.
    \end{equation}
    Let $v : [0,\infty) \times \T \to \R$ be given by $\hat v_t(k) = v^K_t$, noting that $v$ is $\R$-valued as $v^{-k}_t = \bar v^k_t$. We then let $u = w_{a,b,v,L}$ as in Definition~\ref{defn:1d-to-any-d}. Then by Proposition~\ref{prop:ode-to-pde}, we have the for $t \geq 0$,
       \begin{equation}
    \label{eq:theta-rep downhill}
    \theta_{t} = e^{-ALt} \sum_{k \in \Z} z^k_{Lt} f_{a+kb}.
    \end{equation}
    We now verify our conclusions. Item~\ref{item:u div-free downhill} is direct from Item~\ref{item:div-free} of Proposition~\ref{prop:ode-to-pde}. Item~\ref{item:theta fourier support downhill} is direct from the representation~\eqref{eq:theta-rep downhill} and the hypothesis that $|a+kb| \geq |a+b|$ for $k \in \Z$. Then Item~\ref{item:theta mass moved downhill} is direct from~\eqref{eq:theta-rep downhill} and~\eqref{eq:downhill mass moved z}. Thus all that remains is to verify Item~\ref{item:u regularity downhill}. To that end, we note that by Item~\ref{item:regularity} of Proposition~\ref{prop:ode-to-pde} and~\eqref{eq:v bound downhill}, 
    \begin{align*}
        \|u\|_{L^\infty_tW^{n,\infty}_x} &\leq \frac{|b|^n L}{\alpha_{a,b}|a|} \|v\|_{L^\infty([0,\infty), W^{n,\infty}(\T^d))}
        \\&\leq \sup_{t \geq 0} \frac{C(n) |b|^n (|a+b|^2 - |a|^2)}{\alpha_{a,b} |a|} \sum_{k \in \Z} |k|^n |v^k_t|
        \\&\leq \frac{C(n) \eta |b|^n (|a+b|^2 - |a|^2)}{\alpha_{a,b} |a|} \sum_{k \geq 1} \frac{|k|^n}{(1+d_{1-k})^{n+1}}
        \\&\leq \frac{C(n) \eta |b|^n (|a+b|^2 - |a|^2)}{\alpha_{a,b} |a|} \sum_{k \geq 0} \frac{|k|^n}{(1+d_{-k})^{n+1}},
    \end{align*}
    as claimed.
\end{proof}

\subsection{Applying the translated results}

With Corollary~\ref{cor:move mass theta} and Corollary~\ref{cor:move mass theta downhill}, we can now prove Propositions~\ref{prop:fourier to fourier 2d}--\ref{prop:fourier to fourier 4d-2} by careful verifying the hypotheses of the corollaries. Proposition~\ref{prop:fourier to fourier 2d} is the most direct.

\begin{proof}[Proof of Proposition~\ref{prop:fourier to fourier 2d}]
    We want to apply Corollary~\ref{cor:move mass theta} with $a = (r,0)$ and $b = (-r,r+1)$. We note $|a \cdot b| = r^2 \leq |r| |r+1| = |a||b|$, as $r \geq 0$. We then need to verify the hypotheses of Corollary~\ref{cor:move mass theta}. With our choice of $a,b$, we get that
    \[d_k = \frac{r^2 (k-1)^2 + (r+1)^2 k^2 - r^2}{(r+1)^2 - r^2} = \frac{r^2 (k-1)^2 + (r+1)^2 k^2 - r^2}{2r + 1}.\]
    Then 
    \begin{equation*}
    % \label{eq:M bound L infty}
    M := \min_{k \in \Z, k \ne 0,1} d_k \geq \frac{r^2}{2r+1} \geq \frac{r}{3}\geq \frac{r_0}{3},
    \end{equation*}
    so choosing $r_0$ sufficiently large, we get Item~\ref{item:M large theta} of the hypotheses of Corollary~\ref{cor:move mass theta}. For Item~\ref{item:S small theta}, we have that
    \begin{equation}
    \label{eq:S bound L infty}
    S := \sum_{k \in \Z} \frac{1}{1+d_k} \leq 2 + \frac{2}{r^2} \sum_{k =1}^\infty \frac{1}{ k^2} \leq 6,
    \end{equation}
    using that $r \geq 1.$ For Item~\ref{item:d spacing theta}, we have for $k \in \N - \{0\},$
    \[d_{k+1} - d_{1-k} = \frac{(r+1)^2 ((k+1)^2 - (1-k)^2) }{2r + 1} \geq \frac{4(r+1)^2}{2r+1} \geq \frac{4r}{3} \geq 1.\]
    We then take that velocity field $u$ from Corollary~\ref{cor:move mass theta}. From the conclusions of Corollary~\ref{cor:move mass theta}, we immediately get all the conclusions of Proposition~\ref{prop:fourier to fourier 2d}, except that $\|u\|_{L^\infty_{t,x}} \leq C$, which we now verify. From Corollary~\ref{cor:move mass theta}, we have that
    \[\|u\|_{L^\infty_{t,x}} \leq \frac{C ((r+1)^2 - r^2)}{r \alpha_{a,b}} \Big(1 + e^{-M} \sum_{k=0}^\infty \frac{1}{1+d_{-k}}\Big).\]
    Combining the above with~\eqref{eq:S bound L infty}, we have that
    \[\|u\|_{L^\infty_{t,x}} \leq \frac{C}{\alpha_{a,b}}.\]
    Thus to conclude, we just need to uniformly lower bound $\alpha_{a,b}$. For that, we note that
    \[\alpha_{a,b}^2 = 1 - \frac{r^4}{r^2(r^2+ (r+1)^2)} = 1 -\frac{1}{1 + (1+1/r)^2} = \frac{(1+1/r)^2}{1 + (1+1/r)^2} \geq \frac{1}{2},\]
    allowing us to conclude.
\end{proof}

We now turn our attention to Proposition~\ref{prop:fourier to fourier 3d}. We will start with some $(m,n,\ell) \in \Z^3$ and use Legendre's three square theorem to generate $(x,y,z) \in \Z^3$ that we seek to move to. However, we need to verify that we can choose $(x,y,z)$ in such a way that the hypotheses of Corollary~\ref{cor:move mass theta} are satisfied. We do this by supposing without loss of generality (by appropriately changing coordinates) that $0 \leq m \leq n \leq \ell$ and $(x,y,z) = (\tilde x, - \tilde z, \tilde y)$ where $0 \leq \tilde x \leq \tilde y \leq \tilde z,$ which we can do since we only care about $|(x,y,z)|$. There is certainly a more direct but tedious argument showing that this choice of $(m,n,\ell),(x,y,z)$ verifies the hypotheses of Corollary~\ref{cor:move mass theta} for sufficiently large $|(m,n,\ell)|$, but we choose to use the cleaner soft argument below.

\begin{lemma}
\label{lem:sphere-geometry}
    Let $A,C \subseteq S^{d-1} \subseteq \R^d$ closed with $A = - A, C = -C,$ and $A \cap C = \emptyset$. For $\delta >0$ and a set $R \subseteq \R^d$, let $R_\delta$ denote the $\delta$-thickening:
    \[R_\delta := \{r + v : r \in R, |v| \leq \delta\}.\]
    For $a,b \in \R^{d}$, let
    \[\alpha_{a,b} := \Big(1 - \frac{(a\cdot b)^2}{|a|^2|b|^2}\Big)^{1/2}.\]
    Then there exists $K,\delta>0$ such that
    \begin{enumerate}
        \item \label{item:difference isnt zero} $0 \not \in C_\delta - A_\delta$.
        \item \label{item:angle uniformly bounded} $\inf_{a \in A_\delta, c \in C_\delta} \alpha_{a,c-a} \geq K^{-1}.$
        \item \label{item:right growth} For all $k \in \Z$, if $k \ne 0,1,$ then 
        \[\inf_{a \in A_\delta, c \in C_\delta} |a + k(c-a)| \geq K^{-1}|k| +1.\]
        \item \label{item:right separation} For all $k \in \N-\{0\}$,
        \[\inf_{a \in A_\delta, c \in C_\delta} |a + (k+1)(c-a)|^2 - |a + (1-k)(c-a)|^2 \geq K^{-1} k.\]
    \end{enumerate}
    
\end{lemma}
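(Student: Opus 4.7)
The plan is to reduce all four items to a short list of compactness estimates on $S^{d-1}$ and then extend to the $\delta$-thickenings by continuity. Since $A$ and $C$ are closed subsets of the compact sphere and are disjoint, $\dist(A,C)>0$. The symmetry $C=-C$ combined with $A\cap C=\emptyset$ also forces $A\cap(-C)=A\cap C=\emptyset$, so $\dist(A,-C)>0$ as well. Fix $\delta>0$ small enough that both $\dist(A_\delta,C_\delta)$ and $\dist(A_\delta,-C_\delta)$ are bounded below by some $\kappa>0$; this immediately yields item (1). Moreover, on $A_\delta\times C_\delta$ we have $|a|,|c|\in[1-\delta,1+\delta]$ together with $|a-c|,|a+c|\geq\kappa$, which by a short continuity argument forces $a\cdot c$ to stay inside $[-1+\mu,\,1-\mu]$ for some $\mu>0$.

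For item (2), observe that $\alpha_{a,c-a}=0$ iff $c-a$ is a scalar multiple of $a$, i.e.\ $c\in\mathbb{R}\,a$. Together with $|a|,|c|$ near $1$ this would force $c\approx\pm a$, contradicting the distance bounds above. Since $(a,c)\mapsto\alpha_{a,c-a}$ is continuous on the compact set $A_\delta\times C_\delta$, compactness then supplies the desired uniform positive lower bound.

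For items (3) and (4) I would just carry out the direct computation. Writing $w:=c-a$,
\[|a+kw|^2=(1-k)^2|a|^2+2k(1-k)\,a\cdot c+k^2|c|^2,\]
which on the unit sphere simplifies to $1+2k(k-1)(1-a\cdot c)$ and perturbs by only $O(\delta)$ on $A_\delta\times C_\delta$. A direct check on integers $k\notin\{0,1\}$ shows $k(k-1)\geq k^2/2$, so after shrinking $\delta$ one obtains $|a+kw|^2\geq 1+\mu' k^2$ for some $\mu'>0$. Then the identity $\sqrt{1+x}-1=x/(1+\sqrt{1+x})$ gives $|a+kw|\geq 1+c_{\mu'}|k|$ for all $|k|\geq 2$ with a constant $c_{\mu'}>0$, which is precisely item (3) after enlarging $K$. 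For item (4) a direct cancellation yields
\[|a+(k+1)w|^2-|a+(1-k)w|^2=4k\bigl(a\cdot w+|w|^2\bigr)=4k\,c\cdot(c-a),\]
and $c\cdot(c-a)=|c|^2-a\cdot c$ is bounded below on $A_\delta\times C_\delta$ by something close to $\mu$, yielding the required linear-in-$k$ lower bound.

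The main subtlety is purely bookkeeping: transferring the sphere identities ($\delta=0$) to $A_\delta, C_\delta$ introduces $O(\delta)$ perturbations of the various constants, which must be absorbed by shrinking $\delta$ and enlarging $K$ at the end. The only spot that needs a tiny bit of care is the exact form $K^{-1}|k|+1$ in item (3) (as opposed to a weaker $K^{-1}(|k|+1)$), which is the reason for the $\sqrt{1+x}-1$ rearrangement above; everything else is a direct continuity argument on compact sets.
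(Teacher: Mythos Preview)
Your proposal is correct. Items~(1), (2), and (4) proceed exactly as in the paper: compactness plus disjointness give the separation, $\alpha_{a,c-a}=0$ forces $c=\pm a$ which is excluded, and the telescoping computation $|a+(k+1)w|^2-|a+(1-k)w|^2=4k(|c|^2-a\cdot c)$ is the same one the paper carries out.

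The one place you and the paper diverge is item~(3). The paper splits into two regimes: for large $|k|$ it uses the crude triangle inequality $|a+k(c-a)|\geq |k|\,|c-a|-|a|$, and for the finitely many remaining $k$ it reduces by compactness to showing $|(1-k)a+kc|>1$ on $A\times C$, which it proves by observing that $a,c$ are extreme points of $\overline{B_1}$, so no point on the line through them with $t\notin[0,1]$ can lie in $\overline{B_1}$. Your approach instead handles all $k\notin\{0,1\}$ at once via the identity $|a+kw|^2=1+2k(k-1)(1-a\cdot c)+\text{(perturbation)}$ together with $k(k-1)\geq k^2/2$. This is more elementary (no convexity argument) and more uniform; the only caveat is that the perturbation from $A_\delta\times C_\delta$ is $O(\delta k^2)$ rather than $O(\delta)$, but since the main term is also of order $k^2$ this is absorbed by shrinking $\delta$ exactly as you say. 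Your $\sqrt{1+x}-1$ trick then correctly extracts the additive $+1$ from the bound, and the argument covers $k=-1$ as well even though you only wrote ``$|k|\geq 2$''.
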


\begin{proof}
    For Item~\ref{item:difference isnt zero}, we note that since $A \cap C \ne \emptyset$, $0 \not \in C-A$. Then $C_\delta - A_\delta \subseteq (C-A)_{2\delta}$, so taking $\delta$ small enough, we have $0 \not \in (C-A)_{2\delta}$, allowing us to conclude.

    For Item~\ref{item:angle uniformly bounded}, we note that it suffices to prove that for all $a \in A, c\in C$, $\alpha_{a,c-a} \ne 0$, and then continuity and compactness allow us to conclude the general statement. Note that by definition of $\alpha_{a,c-a}, \alpha_{a,c-a} =0$ if and only if $c-a = \gamma a$ for some $\gamma \in \R$. But then $c = (1+\gamma) a$, but since $|c| = |a| = 1$, we must have $\gamma = 0, -2$, so either $c = a$ or $c = -a$. But since $A = -A$, in either case, we get that $C \cap A \ne \emptyset$, contradicting our hypothesis. 

    For Item~\ref{item:right growth}, we note that since $0 \not \in C_\delta - A_\delta$, there exists some $\ep>0$ such that for all $a \in A_\delta, c \in C_\delta$, $|c-a| \geq \ep$. Then we note that
    \[|a + k(c-a)| \geq |k| |c-a| -|a| \geq |k| \ep - 1-\delta.\]
    Thus we get the desired result, choosing $K$ large enough, for all $|k| > 2 \ep^{-1}$, assuming without loss of generality that $\delta <1$. Thus we only need to prove the statement for $|k| \leq 2 \ep^{-1}$, that is a finite collection of $k$. Thus by compactness and continuity, it suffices to prove for all $k \in \Z$, $k \ne 0,1,$ and for all $a\in A, c \in C$ that $|a + k(c-a)| >1$. This then is a consequence of the claim that for all $t \in \R, t \not \in [0,1]$,
    \[(1-t)a + tc \not \in \overline{B_1}.\]
    Suppose that that for some $x \in \overline{B_1}$, $(1-t) a + tc = x$, then if $t >1$,
    \[c = \frac{1}{t} x - \frac{1-t}{t} a,\]
    so $c$ is a convex combination of points $x,a \in \overline{B_1}$. However, $c$ is an extreme point of $\overline{B_1}$ as $c \in S^{d-1}$, so this is a contradiction. The argument follows symmetrically, using that $a$ is an extreme point, in the case that $t<0.$ Thus we conclude Item~\ref{item:right growth}.

    Finally, for Item~\ref{item:right separation}, expanding out the norms with the dot product, we have that
    \begin{align*}&|a + (k+1)(c-a)|^2 - |a + (1-k)(c-a)|^2 
    \\&\qquad= \big((k+1)^2 - (1-k)^2\big) (|c|^2 + |a|^2 -2 c \cdot a) + 2\big((k+1) - (1-k)\big)(a \cdot c -|a|^2)
    \\&\qquad= 4k \big(|c|^2 - c \cdot a\big).
    \end{align*}
    Thus the result follows provided we can show that 
    \[\inf_{a \in A_\delta, c \in C_\delta} |c|^2 - c \cdot a>0.\]
    From compactness and continuity, it suffices to show that for all $a \in A, c \in C$, $|c|^2 - c \cdot a>0$. Then since $a,c \in S^{d-1}$, we note that $|c|^2 - c\cdot a \geq |c|^2 - |c||a| = 0$, with equality if and only if $a = c$ or $a = -c$. However since $A = - A$ and $A \cap C = \emptyset$, this is impossible, allowing us to conclude. 
\end{proof}

We now can apply Lemma~\ref{lem:sphere-geometry} to verify the hypotheses of Corollary~\ref{cor:move mass theta} and conclude Proposition~\ref{prop:fourier to fourier 3d}.

\begin{proof}[Proof of Proposition~\ref{prop:fourier to fourier 3d}]
    Let $(m,n,\ell) \in \Z^3$ with $|(m,n,\ell)| \geq r_0$. Then by Legendre's three square theorem, every number that is $1 \pmod{4}$ is the sum of three squares, thus there exists $(\tilde x,\tilde y,\tilde z) \in \Z^3$ with 
    \[|(m,n,\ell)|^2 +1 \leq |(x,y,z)|^2 \leq |(m,n,\ell)|^2 +8.\]
    Without loss of generality, let's assume that $0 \leq m \leq  n \leq \ell$ and $0 \leq \tilde x \leq \tilde y \leq \tilde z.$ We then define
    \[c:= (x,y,z) := (\tilde x,-\tilde z,\tilde y).\]
    We then define
    \[a := (m,n,\ell) \quad \text{and}\quad b := (x,y,z) -(m,n,\ell).\]
    We want to apply Lemma~\ref{lem:sphere-geometry} in order to verify the hypotheses of Corollary~\ref{cor:move mass theta}. We let 
    \[A := \{(u,v,w) \in S^2 : 0 \leq u \leq v \leq w \text{ or } w \leq v \leq u \leq 0\},\]
    and
    \[C := \{(u,-w,v) \in S^2 : 0 \leq u \leq v \leq w \text{ or } w \leq v \leq u \leq 0\}.\]
    We note that $A = -A, C = -C$, and $A,C$ are closed. We check that $A \cap C =\emptyset$. Suppose that $(u,v,w) \in A \cap C$. Then either $w >0$ or $w < 0$. Let's suppose the first case, the other follows symmetrically. So we have that since $(u,v,w) \in A$ and $w>0$, $0 \leq u \leq v \leq w$. However since $(u,v,w) \in C$ with $w>0$, we must be in the case that $(u,v,w) = (f,-h,g)$ with $0 \leq f \leq g \leq h$. But then $h>0$, so $v<0$, contradicting $0 \leq v$. Thus $A \cap C = \emptyset.$

    Thus we are in the setting of Lemma~\ref{lem:sphere-geometry}, so let $K, \delta>0$ as in Lemma~\ref{lem:sphere-geometry}. We note that 
    \[\frac{a}{|a|} \in A,\quad \frac{c}{|c|} \in C,\quad\text{and}\quad 1 \leq \frac{|c|}{|a|} \leq \sqrt{1 + \frac{8}{|a|^2}}.\]
    Thus $\frac{a}{|a|} \in A \subseteq A_\delta$ and if we take $r_0$ large enough, $\frac{c}{|a|} \in C_\delta.$ Note then that
    \[\frac{b}{|a|} = \frac{c}{|a|} - \frac{a}{|a|}.\]
    Then from Item~\ref{item:angle uniformly bounded}, Item~\ref{item:right growth}, and Item~\ref{item:right separation} of Lemma~\ref{lem:sphere-geometry}, we have that, for some universal $K>0,$
    \begin{enumerate}
        \item \label{item:alpha-bounded-3d} $ \alpha_{a,b} = \alpha_{\frac{a}{|a|}, \frac{b}{|a|}} \geq K^{-1},$
        \item \label{item:growth 3d} for all $k \in \Z, k \ne 0,1,$\; $|a + kb| \geq K^{-1}|a|  |k| + |a|,$
        \item \label{item:separation 3d} for all $k \in \N  - \{0\}$, $|a + (k+1)b|^2 - |a + (1-k)b|^2 \geq K^{-1} |a| k$.
    \end{enumerate}
    We then take as in Corollary~\ref{cor:move mass theta}
    \[d_k := \frac{|a+kb|^2}{L} - A,\quad A := \frac{|a|^2}{L},\quad L:= |a+b|^2- |a|^2,\]
    and
    \[M:= \min_{k\in \Z, k \ne 0,1} d_k \quad\text{and}\quad S:= \sum_{k \in \Z} \frac{1}{1+d_k}.\]
    We first note that $1 \leq L \leq 8$.  We now see that Item~\ref{item:growth 3d} and Item~\ref{item:separation 3d} above above readily imply Item~\ref{item:M large theta}, Item~\ref{item:S small theta}, and Item~\ref{item:d spacing theta} of Corollary~\ref{cor:move mass theta} using that $|a| \geq r_0$ and provided we take $r_0$ sufficiently large.
    
    We then take the velocity field $u$ from Corollary~\ref{cor:move mass theta}, which immediately gives the conclusions of Proposition~\ref{prop:fourier to fourier 3d}, except that $\|u\|_{L^\infty_t W^{1,\infty}_x} \leq C,$ which we now verify. By Item~\ref{item:u regularity} of the conclusions of Corollary~\ref{cor:move mass theta}, the lower bound on $\alpha_{a,b}$ in Item~\ref{item:alpha-bounded-3d} above, and that $L \leq 8,$ we see that
    \[\|u\|_{L^\infty_t W^{1,\infty}_x} \leq C \frac{|b|}{|a|} \Big(1 + e^{-M} \sum_{k=0}^\infty \frac{|k|}{(1 + d_{-k})^2}\Big) \leq C,\]
    using the decay of $d_{-k}$ given by Item~\ref{item:growth 3d} and that $|b| \leq |a| + |c| \leq 3|a|.$ Thus we conclude.
\end{proof}

Proposition~\ref{prop:fourier to fourier 4d-1} follows fairly directly from Corollary~\ref{cor:move mass theta}.

\begin{proof}[Proof of Proposition~\ref{prop:fourier to fourier 4d-1}]
    We let $a = (m,n,\ell,p)$ and $b = (0,0,0,-2p-1).$ Then as in Corollary~\ref{cor:move mass theta}, we define    \[d_k := \frac{|a+kb|^2}{L} - A,\quad A := \frac{|a|^2}{L},\quad L:= |a+b|^2- |a|^2.\]
    We note that
    \[L = 2p+1 \quad \text{and} \quad d_k = \frac{((1+p)k - p)^2 -p^2}{2p+1}.\]
    Thus for $k \ne 0,1$, we have that
    \begin{equation}
    \label{eq:dk decay 4d1}
    d_k \geq C^{-1} p |k|^2.
    \end{equation}
    Choosing $p \geq 10$ sufficiently large, we get Item~\ref{item:M large theta} and Item~\ref{item:S small theta} of the hypotheses of Corollary~\ref{cor:move mass theta}. For Item~\ref{item:d spacing theta}, we have for $k \in \N - \{0\},$
    \begin{align*}d_{k+1} - d_{1-k} &= \frac{((1+p) (k+1) - p)^2  -((1+p)(1-k) -p)^2}{2p+1}
    \\&= \frac{4(1+p)^2k + 4(1+p)pk}{2p+1} \geq p.
    \end{align*}
    Taking $p$ large enough, we thus get Item~\ref{item:d spacing theta} of the hypotheses of Corollary~\ref{cor:move mass theta}. 

    We then take the velocity field $u$ from Corollary~\ref{cor:move mass theta}, which immediately gives the conclusions of Proposition~\ref{prop:fourier to fourier 4d-1}, except that $\|u\|_{L^\infty_t W^{s,\infty}_x} \leq C(s)$, which we now verify. By Item~\ref{item:u regularity} of the conclusions of Corollary~\ref{cor:move mass theta}, using the decay of $d_k$ given by~\eqref{eq:dk decay 4d1}, we have that for each $s\in \N$,
    \[\|u\|_{L^\infty_t W^{s,\infty}_x} \leq \frac{C(s)}{\alpha_{a,b}},\]
    where $C(s)$ can also freely depend on $p$, which is fixed. Thus to conclude, we just need lower bound $\alpha_{a,b}$. Note that, using that $(m,n,\ell) \ne 0,$
    \[\alpha_{a,b}^2 = 1 - \frac{p^2 (2p+1)^2}{(2p+1)^2 (p^2 + m^2 + n^2 + \ell^2)} \geq  1 - \frac{p^2 (2p+1)^2}{(2p+1)^2 (p^2 + 1)} \geq C^{-1} >0,\]
    allowing us to conclude.
\end{proof}

Finally, we use Corollary~\ref{cor:move mass theta downhill} to conclude Proposition~\ref{prop:fourier to fourier 4d-2}.

\begin{proof}[Proof of Proposition~\ref{prop:fourier to fourier 4d-2}]
    Let $(m,n,\ell) \in \Z^3$ with $|(m,n,\ell)| \geq r_0$ and we assume without loss of generality that $0 \leq m \leq n \leq \ell$. We then choose $(x,y,z) \in \Z^3$ as in the proof of Proposition~\ref{prop:fourier to fourier 3d}. We then let
    \[a := (m,n,\ell,-p-1),\quad c := (x,y,z,p),\quad \text{and} \quad b := c-a.\]
    We define, as in Corollary~\ref{cor:move mass theta downhill}, 
     \[d_k := \frac{|a+kb|^2}{L} - A,\quad A:= \frac{|a+b|^2}{L}, \quad L := |a|^2 - |a+b|^2.\]
     We note that $1 \leq L \leq 3p$, using that $(p+1)^2 - p^2 \geq 2p + 1 \geq 20$ and $|(x,y,z)|^2 \leq |(m,n,\ell)|^2 + 8$. We note that 
     \begin{equation}
     \label{eq:dk bound p downhill}
     d_k \geq \frac{|kp -(p+1)|^2 - p^2 -8}{3p} \geq C^{-1} |k|^2 p,
     \end{equation}
     for $k \ne 0,1$. Thus we have Item~\ref{item:M large theta downhill} and Item~\ref{item:S small theta downhill} of the hypotheses of Corollary~\ref{cor:move mass theta downhill} using the $p$ is large enough. Item~\ref{item:d spacing theta downhill} of the hypotheses of Corollary~\ref{cor:move mass theta downhill} is verified by combining the arguments for the analogous item in the proof of Proposition~\ref{prop:fourier to fourier 3d} and the proof of Proposition~\ref{prop:fourier to fourier 4d-1}.

     Thus we can apply Corollary~\ref{cor:move mass theta downhill} for some $\eta>0$ to be determined to give the velocity field $u$, which immediately gives the conclusions of Proposition~\ref{prop:fourier to fourier 4d-2} except that $\|u\|_{L^\infty_t W^{s,\infty}_x} \leq C(s)$, which we now verify. By Item~\ref{item:u regularity downhill} of Corollary~\ref{cor:move mass theta downhill}, we have that
     \[\|u\|_{L^\infty_tW^{s,\infty_x}} \leq \frac{C(s) \eta |b|^s (|a+b|^2 - |a|^2)}{\alpha_{a,b} |a|} \sum_{k \geq 0} \frac{|k|^s}{(1+d_{-k})^{s+1}} \leq C(s)\eta \frac{|b|^s}{\alpha_{a,b} |a|},\]
     using~\eqref{eq:dk bound p downhill} to lower bound $d_{-k}$ and letting $C(s)$ depend on $p$, which is fixed. We then bound $|b| \leq 2 |a| + C$, giving
     \[\|u\|_{L^\infty_tW^{s,\infty}_x} \leq C(s) \eta (|a|^s + 1) \alpha_{a,b}^{-1}.\]
     Noting that $\alpha_{a,b} \ne 0$, which e.g.\ follows from the lower bound on $\alpha_{a,b}$ in the proof of Proposition~\ref{prop:fourier to fourier 3d}, recalling that we can choose $\eta$ depending on $a,b$, we then take $\eta = e^{-|a|} \alpha_{a,b}$, giving that 
          \[\|u\|_{L^\infty_tW^{s,\infty}_x} \leq C(s) e^{-|a|} (|a|^s + 1) \leq C(s),\]
    allowing us to conclude.
\end{proof}

{\small
\bibliographystyle{alpha}
\bibliography{keefer-references}
}

\end{document}